\documentclass[11pt]{article}

\usepackage[a4paper,margin=1in]{geometry}
\usepackage[T1]{fontenc}
\usepackage[utf8]{inputenc}

\usepackage{amsmath,amsthm,amssymb,amsfonts}
\usepackage{hyperref}
\usepackage{enumitem}
\usepackage[protrusion=true,expansion=false]{microtype}

\hypersetup{
  colorlinks=true,
  linkcolor=blue,
  urlcolor=blue,
  citecolor=blue
}

\numberwithin{equation}{section}

\newtheorem{theorem}{Theorem}[section]
\newtheorem{lemma}[theorem]{Lemma}
\newtheorem{proposition}[theorem]{Proposition}
\newtheorem{corollary}[theorem]{Corollary}
\theoremstyle{definition}
\newtheorem{definition}[theorem]{Definition}
\newtheorem{remark}[theorem]{Remark}
\newtheorem{example}[theorem]{Example}

\title{\textbf{Projection, Measure, and Idempotent Relations:\\
Collapse, Rigidity, and a Fixed-Point Coupling Law}}
\author{
Yunbeom Yi\\[4pt]
\small Independent Researcher, Republic of Korea\\
\small Email: \texttt{28ho2132@gmail.com}
}
\date{}

\begin{document}
\maketitle

\begin{abstract}
We introduce a minimal ZFC--internal axiom system for \emph{pre-structural data}
\[
(X,\mathcal A,\mu,\mu^{\otimes2},R,I,\Pi_R,G,E_0,\eta),
\]
coupling a finitely additive measure $\mu$, an idempotent retraction $\Pi_R:X\to R$, and an idempotent symmetric relation $G\subseteq X\times X$ through a single coupling law (Axiom~III). The central result is a \emph{measure--algebraic collapse theorem}: every admissible structural model is concentrated on the representative sector $R$, namely
\[
\mu(X\setminus R)=0,
\]
with no full--partition hypothesis required. As immediate consequences, the coupling parameter satisfies $\eta<1$ automatically, and the two--point load is rigidly determined,
\[
\mu^{\otimes2}\bigl((B\times X)\cap G\bigr)=\frac{\mu(B)}{1-\eta}\qquad(B\in\mathcal A),
\]
so that it is not an independent datum once $(\mu,\eta)$ are fixed. A further rigidity consequence is component quantization: every measurable $G$--equivalence class $C$ has mass $\mu(C)\in\{0,(1-\eta)^{-1}\}$; as an arithmetic corollary, when finitely many positive--mass classes exhaust the measure their count equals $(1-\eta)E_0$, which must therefore be a positive integer, tying the scale $E_0$ and the rate $\eta$ together. We also establish consistency in ZFC by explicit finite, countable, and continuous (Lebesgue) models, including families with $\eta\neq 0$. We prove mutual independence of the three axioms and of the three subclauses of Axiom~III. Here support collapse is driven by the invariance subclause~III(b) alone; the endpoint exclusion $\eta<1$ follows from the $B=X$ coupling law and the finite product--charge bound, load rigidity additionally uses collapse together with the idempotence of $\Pi_R$ (Axiom~I), and the idempotent--relation axiom~II enters essentially only at component quantization, where it makes $G$ an equivalence relation; the independence results show these ingredients are not formally redundant. Finally we give a fixed--point reformulation of the coupling law as the unique bounded finitely additive solution of a Banach--contraction equation $f=T_\eta f$, and a null--extension factorization exhibiting every admissible model as its identity--retraction core extended by $\mu$--null and $\mu^{\otimes2}$--null data.
\end{abstract}

\bigskip

\noindent\textbf{Keywords:}
Axiomatic Systems; Set Theory (ZFC); Measure Theory; Charges and Finitely Additive Measures; Idempotent Relations; Retraction Operators; Coupling Law; Support Collapse; Rigidity; Quantization; Banach Fixed-Point Theorem; Independence of Axioms; Categorical Structure; Null-Extension Factorization.

\medskip

\noindent\textbf{MSC 2020:}
Primary 28A12; Secondary 28A60, 18A05.

\medskip

{\sloppy\noindent\emph{Note (version 2).}
This version substantially revises and retitles the first version (arXiv:2604.18640v1, \emph{Projection, Measure, and Idempotent Relations: Independent Axioms and a Fixed-Point Coupling Law}). The support--concentration result is strengthened to an \emph{unconditional global collapse} $\mu(X\setminus R)=0$, valid for every admissible model with no full--partition and no fiber--measurability hypothesis (Theorem~\ref{thm:global-collapse}); from it we derive the automatic exclusion $\eta<1$ (Proposition~\ref{prop:eta-lt-one}), universal load rigidity $\mu^{\otimes2}((B\times X)\cap G)=\mu(B)/(1-\eta)$ (Corollary~\ref{cor:rigidity}), measurable component quantization (Theorem~\ref{thm:component-quantization}), and the integrality constraint $(1-\eta)E_0\in\mathbb{Z}_{>0}$ (Corollary~\ref{cor:integrality}). A non--atomic continuous Lebesgue model is added (Proposition~\ref{prop:continuous-model}), the fiber--measurability quotient factorization of v1 is subsumed by a hypothesis--free null--extension theorem (Theorem~\ref{thm:null-extension}), and the Banach fixed--point reformulation is demoted to a corollary of collapse. The countable--model construction is corrected: the diagonal relation $\Delta_Y$ used in v1 need not lie in the finitely generated product algebra $\mathcal A\otimes\mathcal A$ for infinite $Y$, and is replaced by the total relation (Proposition~\ref{prop:countable-model}).\par}


\section{Introduction}

We present a small axiom system, internal to Zermelo--Fraenkel set theory with choice (ZFC; see \cite{Jech,Kunen,Enderton,Levy}), for abstract \emph{pre-structural data}, tuples
\[
\mathcal D=(X,\mathcal A,\mu,\mu^{\otimes2},R,I,\Pi_R,G,E_0,\eta)
\]
consisting of:
(i) a set algebra $(X,\mathcal A)$ with a finitely additive measure $\mu:\mathcal A\to[0,\infty)$ and a finitely additive product charge $\mu^{\otimes2}$ on $\mathcal A\otimes\mathcal A$ satisfying the rectangle rule;
(ii) disjoint measurable sets $R,I\in\mathcal A$ and a map $\Pi_R:X\to R$ (not yet required to be idempotent);
(iii) a measurable relation $G\subseteq X\times X$ (not yet required to be reflexive, symmetric, or idempotent);
(iv) two scalars $E_0>0$ and $\eta\in[0,1]$.

Three axioms tie these components together. Axiom~I imposes projective structure (the retraction property $\Pi_R|_R=\mathrm{id}_R$, which already forces $\Pi_R\circ\Pi_R=\Pi_R$, plus measurability of preimages of measurable subsets of $R$). Axiom~II imposes closure structure on $G$ (reflexivity, symmetry, and the idempotence $G\circ G=G$). Axiom~III imposes quantitative coupling between measure, projection, and relation via three subclauses: conservation ($\mu(R)+\mu(I)=E_0>0$), projection--measure invariance ($\mu(\Pi_R^{-1}(B))=\mu(B)$ for measurable $B\subseteq R$), and a coupling law on $(B\times X)\cap G$. A pre-structural datum satisfying Axioms~I--III is called an \emph{admissible structural model}.

\paragraph{Main phenomenon: admissibility forces collapse.}
The point of this paper is not technical difficulty but structural consequence: a global representative--invariance condition already carries all readable mass onto the retract, and the coupling law then propagates this into load rigidity and quantization. Concretely, although the axioms do not require $X=R\sqcup I$, the projection--measure invariance clause alone forces a global support concentration: every admissible model satisfies
\[
\mu(X\setminus R)=0.
\]
Thus $I$, and indeed every point outside the representative sector $R$, is $\mu$--null. The non--representative part of $X$ may still carry set--theoretic and relational structure, but it is invisible to the measure and to the induced two--point load. This is a genuine strengthening of the elementary observation that, \emph{under} a full partition $X=R\sqcup I$, invariance gives $\mu(I)=0$: here no partition is assumed, only that $R\in\mathcal A$ and $\mathcal A$ is an algebra (Theorem~\ref{thm:global-collapse}).

Three rigidity consequences follow. First, the coupling parameter cannot reach the endpoint: $\eta<1$ is automatic for admissible models (Proposition~\ref{prop:eta-lt-one}). Second, the two--point load carried by $G$ is completely determined,
\[
\mu^{\otimes2}\bigl((B\times X)\cap G\bigr)=\frac{\mu(B)}{1-\eta}\qquad(B\in\mathcal A),
\]
so it ceases to be an independent datum (Corollary~\ref{cor:rigidity}). Third, every measurable $G$--equivalence class is quantized: $\mu(C)\in\{0,(1-\eta)^{-1}\}$ (Theorem~\ref{thm:component-quantization}).

\paragraph{Independence is compatible with collapse.}
This rigidity is not a symptom of redundant axioms. We prove that the three axioms, and the three subclauses of Axiom~III, are mutually independent (Theorem~\ref{thm:independence-main}): no axiom or subclause follows from the others. It is worth being precise about the division of labor. The \emph{support collapse} $\mu(X\setminus R)=0$ is driven by the invariance subclause~III(b) alone (indeed by its single instance $B=R$). The remaining consequences separate by their exact dependencies (Table~\ref{tab:dependencies}): the endpoint exclusion $\eta<1$ follows from the $B=X$ instance of the coupling law~III(c) together with the finite product--charge bound alone; universal load rigidity adds collapse and the idempotence of $\Pi_R$ (Axiom~I); and the idempotent--relation axiom~II becomes essential only at component quantization, where it makes $G$ an equivalence relation. The independence results certify that none of these ingredients is formally redundant: the axioms are independent, yet together they force the full rigidity package. Here ``minimal'' is meant in this independence sense: no axiom, and no subclause of Axiom~III, is derivable from the remaining ones (Theorem~\ref{thm:independence-main}, Corollary~\ref{cor:minimality}). We use finite additivity to keep the axiom system minimal; Section~\ref{sec:normalization} records the role of $\sigma$--additivity, which we show is \emph{not} essential to the normalization obstruction.

\paragraph{On terminology.}
We retain the name \emph{load} for the two--point quantity $\mu^{\otimes2}((B\times X)\cap G)$, occasionally calling it the \emph{idempotent--interaction load}. This is a purely structural label for a bilinear, self--composing interaction pattern; it carries no differential--geometric content. No Riemannian, sectional, or Ricci curvature is defined or used. The idempotence $G\circ G=G$ imposes a closure--type rigidity on pair interactions, and ``load'' simply names the measure of the pairs selected by $G$ over a test set.

\subsection{Motivation}

Projection, closure, and measure are standard objects studied in different areas of mathematics (see e.g.\ \cite{Halmos,Bogachev,AliprantisBorder} for measure theory, \cite{Rao,Fremlin} for charges, \cite{Kuratowski} for topology, \cite{BauschkeCombettes} for projection/retraction operators in analysis, \cite{KolokoltsovMaslov} for idempotent analysis, \cite{KrantzEtAl} for axiomatic foundations of measurement, and \cite{MacLane,Awodey,Riehl} for categorical viewpoints).
The axioms place them into a single framework with direct interaction:
\[
\mu^{\otimes2}((B\times X)\cap G)
= \mu(B) + \eta\,\mu^{\otimes2}((\Pi_R^{-1}(B)\times X)\cap G).
\]
This coupling law does not appear in standard measure--theoretic or closure--theoretic axiomatizations. The emphasis here is not on technical difficulty but on structural consequence: once the projection--measure invariance subclause is also present, the coupling law has only one solution, the load being forced to equal $\mu(B)/(1-\eta)$.

The two structural ingredients are familiar from mainstream measure theory and category theory, which is what makes their rigid interaction noteworthy. A measure--preserving idempotent $\Pi_R$ with $\mu\circ\Pi_R^{-1}=\mu$ on $R$ plays a role analogous to projection mechanisms familiar from conditional expectation, sub--$\sigma$--algebra reduction, and retraction theory \cite{BauschkeCombettes}; the support concentration we prove (Theorem~\ref{thm:global-collapse}) is a finitely additive cousin of the elementary fact that a measure invariant under a measurable retraction is carried by the retract. The idempotent symmetric relation $G$, in turn, sits where an equivalence relation or a measure--algebra projection sits: $G\circ G=G$ together with symmetry and reflexivity makes $G$ an equivalence relation (Proposition~\ref{prop:elementary}), and the two--point load $\mu^{\otimes2}((\,\cdot\,\times X)\cap G)$ measures the pairs it selects. The coupling axiom~III(c) is not an arbitrary postulate joining these two: within the affine self--similar update class encoded by $T_\eta$ --- agreement with $\mu$ at baseline ($\eta=0$), and propagation along $\Pi_R^{-1}$ at rate $\eta$ otherwise --- it is the fixed--point condition whose bounded finitely additive solution is unique. We do not claim uniqueness among all conceivable coupling laws. We make this precise as a Banach fixed--point statement in Section~\ref{sec:fixed-point} (Theorem~\ref{thm:coupling-fixed-point}, Remark~\ref{rem:coupling-not-ad-hoc}); the present collapse results then show that, on an admissible model, even this one degree of freedom is rigidly resolved.

\subsection{Relation to classical frameworks}

Each component has a classical analogue:
\begin{itemize}[nosep]
  \item $G\circ G = G$ encodes an idempotent closure/equivalence--type behavior at the level of relations, parallel to a measure--algebra projection.
  \item $\Pi_R\circ\Pi_R = \Pi_R$ is the defining idempotence of a retraction.
  \item $(X,\mathcal A,\mu)$ is a finitely additive measure space.
  \item The invariance $\mu(\Pi_R^{-1}(B))=\mu(B)$ for $B\subseteq R$ is a measure--preserving retraction onto $R$, in the spirit of measure--preserving maps in ergodic theory.
\end{itemize}
The key point is that these components coexist in one structure, are coupled by Axiom~III, and that their coexistence collapses the structure onto $R$.

\subsection{Contributions}\label{sec:contributions}
The contributions can be summarized as follows.
\begin{itemize}[nosep]
  \item A global support collapse theorem: every admissible model satisfies $\mu(X\setminus R)=0$, hence $\mu(I)=0$ and $\mu(X)=\mu(R)=E_0$, with no full--partition hypothesis (Theorem~\ref{thm:global-collapse}).
  \item Automatic exclusion of the endpoint $\eta=1$ (Proposition~\ref{prop:eta-lt-one}) and a finite--additive feasibility bound $E_0\ge(1-\eta)^{-1}$ (Proposition~\ref{prop:feasibility-bound}).
  \item Universal load rigidity: $\mu^{\otimes2}((B\times X)\cap G)=\mu(B)/(1-\eta)$ for all $B\in\mathcal A$ (Corollary~\ref{cor:rigidity}), proved by an elementary self--substitution independent of the fixed--point machinery.
  \item Measurable component quantization: $\mu(C)\in\{0,(1-\eta)^{-1}\}$ for every measurable $G$--equivalence class (Theorem~\ref{thm:component-quantization}), with the arithmetic corollary that a finite positive--mass block count equals $(1-\eta)E_0\in\mathbb{Z}_{>0}$ (Corollary~\ref{cor:integrality}).
  \item A ZFC--internal axiom system and explicit model families establishing consistency, including finite families with $\eta\neq 0$ and a non--atomic continuous model on $[0,1]$ with Lebesgue measure (Section~\ref{sec:models}, Proposition~\ref{prop:continuous-model}).
  \item Independence of the three axioms and of the three subclauses of Axiom~III (Theorem~\ref{thm:independence-main}, Corollary~\ref{cor:minimality}), showing collapse is genuine rather than a redundancy artifact.
  \item A fixed--point reformulation of the coupling law as the Banach--contraction equation $f=T_\eta f$ with closed form $f_*(B)=\mu(B)+\tfrac{\eta}{1-\eta}\mu(\Pi_R^{-1}(B))$ (Theorem~\ref{thm:coupling-fixed-point}), now recovered as a corollary of collapse.
  \item A null--extension factorization reducing every admissible model to its identity--retraction core (Theorem~\ref{thm:null-extension}); under summability hypotheses the classification reduces to the block dichotomy of Theorem~\ref{thm:pi-id-classification}.
\end{itemize}

\section{Language and basic objects}

We recall standard notions used throughout the paper.

\begin{definition}[Algebra of sets]
Let $X$ be nonempty. A family $\mathcal A\subseteq\mathcal P(X)$ is an \emph{algebra} if $\varnothing,X\in\mathcal A$ and
$B_1,B_2\in\mathcal A$ implies $B_1\cup B_2\in\mathcal A$ and $X\setminus B_1\in\mathcal A$.
\end{definition}

\begin{definition}[Finitely additive measure]
A finitely additive measure is a map $\mu:\mathcal A\to[0,\infty)$ such that $\mu(\varnothing)=0$ and
$\mu(B_1\sqcup B_2)=\mu(B_1)+\mu(B_2)$ whenever $B_1,B_2\in\mathcal A$ are disjoint.
\end{definition}

\begin{definition}[Binary relations and composition]
A binary relation $H$ on $X$ is a subset of $X\times X$. Its composition with a relation $K$ is
\[
H\circ K = \{(x,z):\exists y\ (x,y)\in H,\ (y,z)\in K\}.
\]
The diagonal is $\Delta_X=\{(x,x):x\in X\}$. A relation is \emph{idempotent} if $H\circ H=H$; for composition operations on relations and graphs see \cite{Sabidussi}.
\end{definition}

\begin{definition}[Retraction]
If $R\subseteq X$, a map $\Pi_R:X\to R$ is a \emph{retraction} if $\Pi_R|_R=\mathrm{id}_R$.
In this setting the idempotence $\Pi_R\circ\Pi_R=\Pi_R$ follows automatically: for any $x\in X$ one has $\Pi_R(x)\in R$, so $\Pi_R(\Pi_R(x))=\Pi_R(x)$.
\end{definition}

\begin{definition}[Product algebra and product charge]\label{def:product-measure}
Given $(X,\mathcal A,\mu)$, the product algebra $\mathcal A\otimes\mathcal A$ is the algebra generated by rectangles
$B_1\times B_2$ with $B_i\in\mathcal A$. When $\mathcal A$ is a $\sigma$--algebra and a $\sigma$--additive product is intended, we write $\mathcal A\,\overline{\otimes}\,\mathcal A$ for the product $\sigma$--algebra (the $\sigma$--algebra generated by rectangles), so $\mathcal A\otimes\mathcal A\subseteq\mathcal A\,\overline{\otimes}\,\mathcal A$; the unadorned $\mathcal A\otimes\mathcal A$ always denotes the \emph{algebra}.

A finitely additive set function $\mu^{\otimes2}:\mathcal A\otimes\mathcal A\to[0,\infty)$ is called a \emph{product charge}
(for $\mu$) if it satisfies the rectangle rule
\[
\mu^{\otimes2}(B_1\times B_2)=\mu(B_1)\mu(B_2)\qquad(B_1,B_2\in\mathcal A).
\]
In the finitely additive setting, existence/uniqueness of such an extension is not automatic in full generality; whenever
$\mu^{\otimes2}$ is used below, it is treated as part of the model data satisfying the rectangle rule.
(Under $\sigma$--additivity and $\sigma$--finiteness, the standard product charge exists uniquely; see Section~\ref{sec:normalization}.)
\end{definition}

\begin{lemma}[Atomic product charge exists]\label{lem:atomic-product-charge}
Assume $X$ is finite or countable and $\mathcal A=\mathcal P(X)$. Suppose $\mu$ is atomic, i.e.\ there exists
$m:X\to[0,\infty)$ such that $\mu(B)=\sum_{x\in B}m(x)$ for all $B\subseteq X$. In the countable case assume moreover that the total mass is finite,
\[
\sum_{x\in X}m(x)<\infty,
\]
so that $\mu$ is $[0,\infty)$--valued, consistently with Definition~\ref{def:product-measure}.
Define $\mu^{\otimes2}:\mathcal P(X\times X)\to[0,\infty)$ by
\[
\mu^{\otimes2}(S):=\sum_{(x,y)\in S} m(x)m(y)\qquad(S\subseteq X\times X).
\]
Then $\mu^{\otimes2}$ is finitely additive and satisfies the rectangle rule
$\mu^{\otimes2}(A\times B)=\mu(A)\mu(B)$ for all $A,B\subseteq X$.
In particular, in the finite/countable atomic regimes used in Section~\ref{sec:models}, a product charge can be taken canonically.
\end{lemma}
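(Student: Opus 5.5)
The proof reduces to properties of absolutely summable nonnegative families. Since $\sum_{x\in X}m(x)<\infty$ and $m\ge 0$, the family $(m(x)m(y))_{(x,y)\in X\times X}$ is also absolutely summable. Indeed, for any finite $F\subseteq X\times X$, contained in $F_1\times F_2$ with $F_1,F_2\subseteq X$ finite, we have
\[
\sum_{(x,y)\in F} m(x)m(y)\le \Bigl(\sum_{x\in F_1}m(x)\Bigr)\Bigl(\sum_{y\in F_2}m(y)\Bigr)\le \mu(X)^2 .
\]
Hence $\mu^{\otimes2}(S)$ is well defined and lies in $[0,\mu(X)^2]\subseteq[0,\infty)$ for every $S\subseteq X\times X$. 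This is the supremum-over-finite-subsets definition of an unordered sum of nonnegative terms, so no ordering of the countable index set is involved. In the finite case everything is a finite sum and the argument is trivial.

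Next I will check finite additivity. Clearly $\mu^{\otimes2}(\varnothing)=0$ as an empty sum. For disjoint $S_1,S_2$, the identity $\sum_{S_1\sqcup S_2}=\sum_{S_1}+\sum_{S_2}$ is the standard additivity of unordered sums of nonnegative terms over a disjoint union of index sets. It can be verified by taking suprema over finite subsets: every finite $F\subseteq S_1\sqcup S_2$ splits as $(F\cap S_1)\sqcup(F\cap S_2)$, which gives $\le$; conversely, unions of finite subsets of $S_1$ and $S_2$ give $\ge$. In fact countable additivity holds too, but it is not needed.

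The only step requiring any care is the rectangle rule, which needs the product of two sums to equal the double sum. For $A,B\subseteq X$, Tonelli's theorem for nonnegative series (equivalently, rearrangement of absolutely convergent double series) gives
\[
\mu^{\otimes2}(A\times B)=\sum_{x\in A}\sum_{y\in B}m(x)m(y)=\sum_{x\in A}m(x)\,\mu(B)=\mu(A)\mu(B).
\]
The middle step uses linearity of a convergent sum under multiplication by the constant $m(x)$. Finally, since $\mathcal A=\mathcal P(X)$, the algebra $\mathcal A\otimes\mathcal A$ is contained in $\mathcal P(X\times X)$. Restricting $\mu^{\otimes2}$ to $\mathcal A\otimes\mathcal A$ therefore yields a product charge in the sense of Definition~\ref{def:product-measure}, which justifies the final sentence of the lemma.
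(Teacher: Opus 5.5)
Your proposal is correct and follows the same route as the paper: finite additivity comes from additivity of nonnegative sums over disjoint index sets, and the rectangle rule comes from factoring the double sum $\sum_{x\in A}\sum_{y\in B}m(x)m(y)$ into $\mu(A)\mu(B)$. You additionally make explicit what the paper leaves implicit, namely that $\mu^{\otimes2}$ is well defined and bounded by $\mu(X)^2$ as an unordered sum, and that the interchange of summation in the countable case is justified by Tonelli for nonnegative series.
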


\begin{proof}
Finite additivity follows from disjointness of sums. For rectangles,
\[
\mu^{\otimes2}(A\times B)=\sum_{x\in A}\sum_{y\in B} m(x)m(y)
=\Big(\sum_{x\in A}m(x)\Big)\Big(\sum_{y\in B}m(y)\Big)=\mu(A)\mu(B).
\]
\end{proof}

\section{The axioms}\label{sec:axioms}

\begin{definition}[Pre-structural datum]\label{def:structural-datum}
A \emph{pre-structural datum} is a tuple
\[
\mathcal D=(X,\allowbreak \mathcal A,\allowbreak \mu,\allowbreak \mu^{\otimes2},\allowbreak R,\allowbreak I,\allowbreak \Pi_R,\allowbreak G,\allowbreak E_0,\allowbreak \eta)
\]
such that:
\begin{itemize}[nosep]
  \item $X$ is a nonempty set and $\mathcal A\subseteq\mathcal P(X)$ is an algebra;
  \item $\mu:\mathcal A\to[0,\infty)$ is a finitely additive measure;
  \item $\mu^{\otimes2}:\mathcal A\otimes\mathcal A\to[0,\infty)$ is a finitely additive product charge satisfying the rectangle rule
  \[
  \mu^{\otimes2}(B_1\times B_2)=\mu(B_1)\mu(B_2)\qquad(B_1,B_2\in\mathcal A);
  \]
  \item $R,I\in\mathcal A$ are disjoint and $R\cup I\in\mathcal A$;
  \item $\Pi_R:X\to R$ is a map;
  \item $G\subseteq X\times X$ lies in $\mathcal A\otimes\mathcal A$;
  \item $E_0\in(0,\infty)$ and $\eta\in[0,1]$ are scalars.
\end{itemize}
The pre-structural datum specifies only the raw signature of a structural model; the structural constraints are imposed separately by Axioms~I--III below.
\end{definition}

\paragraph{Axiom I (projection).}
$\Pi_R|_R=\mathrm{id}_R$; since $\Pi_R:X\to R$, this already forces the idempotence $\Pi_R\circ\Pi_R=\Pi_R$.
If $B\in\mathcal A\cap\mathcal P(R)$, then $\Pi_R^{-1}(B)\in\mathcal A$.
(The disjointness $R\cap I=\varnothing$ and the measurability $R\cup I\in\mathcal A$ are already imposed by the pre-structural datum.)

\paragraph{Axiom II (closed idempotent relation).}
$G$ is reflexive, symmetric, and idempotent: $G\circ G=G$.

\paragraph{Axiom III (Coupling).}
$\mu(R)+\mu(I)=E_0>0$.
If $B\subseteq R$ is measurable then $\mu(\Pi_R^{-1}(B))=\mu(B)$.
For all $B\in\mathcal A$,
\[
\mu^{\otimes2}((B\times X)\cap G)
= \mu(B)+\eta\,\mu^{\otimes2}((\Pi_R^{-1}(B)\times X)\cap G).
\]

\begin{remark}\label{rem:preimage}
Since $\Pi_R$ maps into $R$, one has $\Pi_R^{-1}(B)=\Pi_R^{-1}(B\cap R)$ for all $B\in\mathcal A$.
Thus $\Pi_R^{-1}(B)$ is measurable whenever $B\cap R$ is, by Axiom~I.
\end{remark}

\begin{remark}[Subclause~III(b) is the engine of collapse]\label{rem:IIIb-engine}
The projection--measure invariance subclause $\mu(\Pi_R^{-1}(B))=\mu(B)$ may look like a mild compatibility requirement, but it is the source of all the rigidity below. Already its instance at $B=R$ forces the measure onto $R$ (Theorem~\ref{thm:global-collapse}); the coupling law then propagates this collapse into the two--point load (Corollary~\ref{cor:rigidity}). The remaining subclauses fix the scale ($E_0$) and the propagation rate ($\eta$).
\end{remark}

\begin{definition}[Admissible structural model]\label{def:admissible}
A pre-structural datum
\[
\mathcal D=(X,\mathcal A,\mu,\mu^{\otimes2},R,I,\Pi_R,G,E_0,\eta)
\]
is called an \emph{admissible structural model} (or simply a \emph{structural model}) if it satisfies Axioms~I, II, and~III. When we refer to $\mathcal M$ as a \emph{structural model} without further qualification, admissibility is understood unless explicitly stated otherwise. The objects of the categories $\mathbf{Struct}_\eta$ and $\mathbf{Struct}^{\mathrm{id}}_\eta$ introduced in Section~\ref{sec:category} are admissible structural models by convention.
\end{definition}

\section{Collapse and rigidity}\label{sec:collapse}

This section contains the main structural results. After recording elementary consequences of Axioms~I and~II, we prove the global support collapse theorem, derive $\eta<1$ and the feasibility bound, and then obtain universal load rigidity by an elementary self--substitution. The fixed--point reformulation of the coupling law is placed last, as a corollary of collapse rather than as its engine.

\subsection{Elementary consequences}

\begin{proposition}[Elementary consequences of Axioms I and II]\label{prop:elementary}
From Axioms~I and~II one has immediately:
\begin{enumerate}[label=(\roman*),nosep]
\item (\emph{retraction fixed points}) $\Pi_R(x)=x$ if and only if $x\in R$;
\item (\emph{iterated projection}) $\Pi_R^n(x)=\Pi_R(x)$ for every integer $n\ge 1$; in particular $\Pi_R^n(x)\in R$;
\item (\emph{$G$ is an equivalence relation}) from reflexivity, symmetry, and idempotence $G\circ G=G$, transitivity follows: if $(x,y),(y,z)\in G$ then $(x,z)\in G\circ G=G$.
\end{enumerate}
\end{proposition}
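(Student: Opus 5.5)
The plan is to verify the three items directly from the definitions. No measure-theoretic input is needed; only Axiom~I, and Axiom~II for item~(iii), are used.

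For (i), argue in both directions. If $x\in R$, then $\Pi_R(x)=x$ because $\Pi_R|_R=\mathrm{id}_R$ (Axiom~I). Conversely, if $\Pi_R(x)=x$, then $x=\Pi_R(x)\in R$, since $\Pi_R$ takes values in $R$ by the pre-structural datum (Definition~\ref{def:structural-datum}). So the fixed-point set of $\Pi_R$ is exactly $R$.

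For (ii), induct on $n$. The base case $n=1$ is trivial. For the step, suppose $\Pi_R^n(x)=\Pi_R(x)$. Then
\[
\Pi_R^{n+1}(x)=\Pi_R\bigl(\Pi_R^n(x)\bigr)=\Pi_R\bigl(\Pi_R(x)\bigr)=\Pi_R(x).
\]
The last equality holds because $\Pi_R(x)\in R$ and $\Pi_R$ fixes $R$ by (i); this is the idempotence noted after Axiom~I. Membership $\Pi_R^n(x)\in R$ then follows because $\Pi_R$ maps into $R$.

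For (iii), reflexivity and symmetry are given directly by Axiom~II, so only transitivity needs proof. Take $(x,y)\in G$ and $(y,z)\in G$. By the definition of composition, the witness $y$ gives $(x,z)\in G\circ G$, and $G\circ G=G$ then yields $(x,z)\in G$. Only the inclusion $G\circ G\subseteq G$ is used here. The reverse inclusion is automatic from reflexivity, since $(x,z)=(x,z)\circ(z,z)$, but it is not needed.

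There is no genuine obstacle. The only point requiring care is in (i): the backward direction depends on the codomain of $\Pi_R$ being $R$, and not on Axiom~I alone.
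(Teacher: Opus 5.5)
Your proposal is correct and matches the paper's proof: (i) from $\Pi_R(X)\subseteq R$ together with $\Pi_R|_R=\mathrm{id}_R$, (ii) by induction using the idempotence $\Pi_R\circ\Pi_R=\Pi_R$, and (iii) by using $y$ as the composition witness to get $(x,z)\in G\circ G=G$. Your remarks that only $G\circ G\subseteq G$ is needed, and that the converse inclusion follows from reflexivity, are accurate but not required.
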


\begin{proof}
(i) follows from $\Pi_R(X)\subseteq R$ and $\Pi_R|_R=\mathrm{id}_R$ (Axiom~I). (ii) is a consequence of idempotence $\Pi_R\circ\Pi_R=\Pi_R$: for $n=1$ this is definitional, and for $n\ge 2$ we have $\Pi_R^n(x)=\Pi_R(\Pi_R^{n-1}(x))=\Pi_R(x)$ by induction. (iii) is immediate from Axiom~II as stated.
\end{proof}

\begin{proposition}[Preimage idempotence]\label{prop:preimage-idempotence}
Under Axiom~I, the preimage map is idempotent: $\Pi_R^{-1}(\Pi_R^{-1}(B))=\Pi_R^{-1}(B)$ for all $B\in\mathcal A$.
\end{proposition}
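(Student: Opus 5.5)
The plan is to verify the set identity $\Pi_R^{-1}(\Pi_R^{-1}(B))=\Pi_R^{-1}(B)$ pointwise, by a double inclusion resting on the idempotence $\Pi_R\circ\Pi_R=\Pi_R$. That idempotence is forced by Axiom~I; see the definition of retraction and Proposition~\ref{prop:elementary}(ii).

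The first step is to check that the left-hand side is well-defined. Here $\Pi_R^{-1}(B)$ is a subset of $X$ and is measurable by Remark~\ref{rem:preimage}. Since $\Pi_R$ maps into $R$, the outer preimage should be read as
\[
\Pi_R^{-1}\bigl(\Pi_R^{-1}(B)\bigr)=\Pi_R^{-1}\bigl(\Pi_R^{-1}(B)\cap R\bigr),
\]
which is again an honest preimage of a subset of $R$.

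Next I unwind membership. For $x\in X$,
\[
x\in\Pi_R^{-1}(\Pi_R^{-1}(B))\iff \Pi_R(x)\in\Pi_R^{-1}(B)\iff \Pi_R(\Pi_R(x))\in B .
\]
By idempotence, $\Pi_R(\Pi_R(x))=\Pi_R(x)$. So the last condition is equivalent to $\Pi_R(x)\in B$, that is, $x\in\Pi_R^{-1}(B)$. The chain of equivalences gives both inclusions at once.

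No step is a real obstacle. The only point needing care is the domain convention for the outer preimage: the set $\Pi_R^{-1}(B)$ need not lie inside $R$. This is handled by the observation above, since only its intersection with $R$ matters. Alternatively, the identity can be stated as $(\Pi_R\circ\Pi_R)^{-1}(B)=\Pi_R^{-1}(B)$, which follows directly from $\Pi_R\circ\Pi_R=\Pi_R$.
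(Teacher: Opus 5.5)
Your proposal is correct and matches the paper's proof: the same pointwise chain of equivalences, with the step $\Pi_R(\Pi_R(x))\in B\iff\Pi_R(x)\in B$ justified by the idempotence from Proposition~\ref{prop:elementary}(ii). Your remark on the domain of the outer preimage is accurate but not needed, since preimages under $\Pi_R:X\to R$ are defined for arbitrary subsets of $X$.
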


\begin{proof}
For any $x\in X$, $x\in\Pi_R^{-1}(\Pi_R^{-1}(B))$ iff $\Pi_R(x)\in\Pi_R^{-1}(B)$ iff $\Pi_R(\Pi_R(x))\in B$ iff $\Pi_R(x)\in B$ (by idempotence of $\Pi_R$, Proposition~\ref{prop:elementary}(ii)) iff $x\in\Pi_R^{-1}(B)$.
\end{proof}

\subsection{Global support collapse}

The following is the central theorem of the paper. It says that the representative sector $R$ carries all of the $\mu$--mass, with no partition hypothesis: the conclusion holds for \emph{every} admissible model.

\begin{theorem}[Global support collapse]\label{thm:global-collapse}
Let $\mathcal M=(X,\mathcal A,\mu,\mu^{\otimes2},R,I,\Pi_R,G,E_0,\eta)$ be an admissible structural model. Then
\[
\mu(X\setminus R)=0.
\]
Consequently $\mu(I)=0$ and $\mu(X)=\mu(R)=E_0$. Moreover, for every $B\in\mathcal A$,
\[
\mu(\Pi_R^{-1}(B))=\mu(B).
\]
\end{theorem}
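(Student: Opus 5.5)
The whole argument uses the invariance subclause~III(b) at the single test set $B=R$. Since $\Pi_R$ maps $X$ into $R$, we have $\Pi_R^{-1}(R)=X$. Moreover $R\in\mathcal A$ and $R\subseteq R$, so $R$ is an admissible test set in III(b). Axiom~I (or trivially $X\in\mathcal A$) guarantees that the preimage is measurable. Hence III(b) yields
\[
\mu(X)=\mu(\Pi_R^{-1}(R))=\mu(R).
\]
Because $\mathcal A$ is an algebra, $X\setminus R\in\mathcal A$. Finite additivity on the disjoint decomposition $X=R\sqcup(X\setminus R)$ gives $\mu(X)=\mu(R)+\mu(X\setminus R)$. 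Since all values are finite reals in $[0,\infty)$, we may subtract and conclude $\mu(X\setminus R)=0$. No partition hypothesis enters anywhere.

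The consequences come next. Since $I\cap R=\varnothing$, we have $I\subseteq X\setminus R$, and monotonicity of a finitely additive measure on an algebra gives $\mu(I)\le\mu(X\setminus R)=0$. Monotonicity holds because $\mu(X\setminus R)=\mu(I)+\mu((X\setminus R)\setminus I)$, with both pieces in $\mathcal A$. Then III(a) gives $E_0=\mu(R)+\mu(I)=\mu(R)$, and by the display above this also equals $\mu(X)$.

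For the extended invariance, take arbitrary $B\in\mathcal A$. By Remark~\ref{rem:preimage}, $\Pi_R^{-1}(B)=\Pi_R^{-1}(B\cap R)$, and $B\cap R$ is a measurable subset of $R$. So Axiom~I makes this set measurable, and III(b) gives $\mu(\Pi_R^{-1}(B))=\mu(B\cap R)$. Finally,
\[
\mu(B)=\mu(B\cap R)+\mu(B\setminus R),
\]
and $\mu(B\setminus R)\le\mu(X\setminus R)=0$, so $\mu(B\cap R)=\mu(B)$. This completes the chain.

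There is no genuine obstacle. The only step needing care is the bookkeeping that every set involved ($X\setminus R$, $B\cap R$, $B\setminus R$, and the preimages) lies in $\mathcal A$, so that finite additivity and monotonicity may be invoked; this follows from the algebra axioms and Axiom~I. One should also note explicitly that finiteness of $\mu$ is what licenses the subtraction in the first step.
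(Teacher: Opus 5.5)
Your proposal is correct and follows the paper's proof essentially step for step. You apply III(b) at $B=R$ using $\Pi_R^{-1}(R)=X$, then use finite additivity on $X=R\sqcup(X\setminus R)$, then $I\subseteq X\setminus R$ together with conservation, and finally Remark~\ref{rem:preimage} with III(b) applied to $B\cap R$ and $\mu(B\setminus R)=0$. Your explicit notes on monotonicity and on finiteness licensing the subtraction are only slightly more careful bookkeeping than the paper writes out.
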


\begin{proof}
Apply projection--measure invariance (Axiom~III(b)) with $B=R$. Since $\Pi_R:X\to R$, we have $\Pi_R^{-1}(R)=X$, hence
\[
\mu(X)=\mu(\Pi_R^{-1}(R))=\mu(R).
\]
Because $R\in\mathcal A$ and $\mathcal A$ is an algebra, $X\setminus R\in\mathcal A$, and $X=R\sqcup(X\setminus R)$. Finite additivity gives
\[
\mu(X)=\mu(R)+\mu(X\setminus R),
\]
so $\mu(X\setminus R)=0$. Since $I\cap R=\varnothing$ we have $I\subseteq X\setminus R$, whence $\mu(I)=0$; and by conservation (Axiom~III(a)) $\mu(X)=\mu(R)=\mu(R)+\mu(I)=E_0$.

For the final identity, fix $B\in\mathcal A$. By Remark~\ref{rem:preimage}, $\Pi_R^{-1}(B)=\Pi_R^{-1}(B\cap R)$, and $B\cap R\subseteq R$ is measurable, so Axiom~III(b) applies to $B\cap R$:
\[
\mu(\Pi_R^{-1}(B))=\mu(\Pi_R^{-1}(B\cap R))=\mu(B\cap R).
\]
On the other hand $B=(B\cap R)\sqcup(B\setminus R)$ with $B\setminus R\subseteq X\setminus R$, so $\mu(B\setminus R)=0$ and $\mu(B)=\mu(B\cap R)$. Combining, $\mu(\Pi_R^{-1}(B))=\mu(B)$.
\end{proof}

\begin{remark}[Comparison with the partition case]\label{rem:partition-comparison}
The elementary observation that, under a full partition $X=R\sqcup I$, invariance forces $\mu(I)=0$, is the special case obtained by noting $X\setminus R=I$. Theorem~\ref{thm:global-collapse} removes the partition hypothesis entirely: the conclusion $\mu(X\setminus R)=0$ uses only $R\in\mathcal A$ and finite additivity. The non--representative part $X\setminus R$ may be large and may carry rich relational structure through $G$, yet it is $\mu$--null.
\end{remark}

\begin{remark}[Interpretation: $\mu$ is a readable measure]\label{rem:mu-readable}
The collapse $\mu(X\setminus R)=0$ should not be read as the nonexistence of non--representative data. It says only that the chosen measure $\mu$ is supported on the representative sector $R$; the null extension $X\setminus R$ may still carry set--theoretic, relational, or protocol data that is simply invisible to $\mu$. Accordingly, $\mu$ is to be interpreted as a \emph{readable} (representative) measure rather than as a tally of raw internal content: collapse concentrates what is read, not what exists. Statements about admissible models are correspondingly statements about the readable measure: they impose no \emph{positive-$\mu$-mass} constraint beyond the structural requirements that Axioms~I and~II already place on $X$, $\Pi_R$, and $G$, which continue to constrain even the $\mu$--null sector.
\end{remark}

\begin{example}[Why invariance forbids mass on nontrivial fibers]\label{ex:fiber-mass}
Let $X=\{r_0,r_1,a,b\}$, $R=\{r_0,r_1\}$, $I=\{a,b\}$, and $\Pi_R|_R=\mathrm{id}_R$, $\Pi_R(a)=r_0$, $\Pi_R(b)=r_1$.
If $\mu(\{a\})>0$, then for $B=\{r_0\}$ one has $\Pi_R^{-1}(B)=\{r_0,a\}$, so $\mu(\Pi_R^{-1}(B))=\mu(\{r_0\})+\mu(\{a\})>\mu(B)$, violating invariance. Theorem~\ref{thm:global-collapse} is the global form of this obstruction.
\end{example}

\subsection{Automatic exclusion of the endpoint and a feasibility bound}

\begin{proposition}[The endpoint $\eta=1$ is excluded]\label{prop:eta-lt-one}
For every admissible structural model, $\eta<1$.
\end{proposition}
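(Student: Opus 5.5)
Suppose for contradiction that $\eta=1$. The plan is to use only the instance $B=X$ of the coupling law~III(c), together with the finiteness of the product charge, as the abstract indicates.

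First, compute the preimage. Since $\Pi_R:X\to R$, we have $\Pi_R^{-1}(X)=X$, so the preimage term coincides with the left-hand side. Set $L:=\mu^{\otimes2}((X\times X)\cap G)=\mu^{\otimes2}(G)$. This is a well-defined real number because $G\in\mathcal A\otimes\mathcal A$ and $\mu^{\otimes2}$ takes values in $[0,\infty)$. With $\eta=1$, Axiom~III(c) at $B=X$ reads
\[
L=\mu(X)+L .
\]
Since $L$ is finite we may cancel it, which gives $\mu(X)=0$.

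Second, derive the contradiction. By Theorem~\ref{thm:global-collapse}, $\mu(X)=E_0$. Alternatively, avoiding collapse, finite additivity and monotonicity give $\mu(X)\ge\mu(R\cup I)=\mu(R)+\mu(I)=E_0$ by Axiom~III(a). Either way $\mu(X)\ge E_0>0$, contradicting $\mu(X)=0$. Hence $\eta\neq1$, and since $\eta\in[0,1]$ we conclude $\eta<1$.

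The only delicate point is the cancellation step. It needs $\mu^{\otimes2}(G)<\infty$, which holds because product charges are $[0,\infty)$-valued by Definition~\ref{def:product-measure}. I would remark that in an extended-valued setting $L=\infty$ would satisfy the equation, so this finiteness is exactly what the argument uses.

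As a supplementary observation, for general $\eta$ the same instance gives $(1-\eta)L=\mu(X)=E_0>0$. This recovers $\eta<1$ directly and also gives $L=E_0/(1-\eta)$.
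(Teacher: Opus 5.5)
Your proposal is correct and essentially matches the paper's proof: take the $B=X$ instance of III(c), use finiteness of $\mu^{\otimes2}(G)$ to cancel, and contradict positivity $\mu(X)\ge\mu(R)+\mu(I)=E_0>0$ from III(a). The only cosmetic difference is that you justify finiteness directly from the $[0,\infty)$-valuedness of the product charge, while the paper invokes the rectangle bound $\mu^{\otimes2}(G)\le\mu(X)^2$; both are valid.
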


\begin{proof}
Apply the coupling law (Axiom~III(c)) with $B=X$. Since $(X\times X)\cap G=G$ and $\Pi_R^{-1}(X)=X$,
\[
\mu^{\otimes2}(G)=\mu(X)+\eta\,\mu^{\otimes2}(G),\qquad\text{i.e.}\qquad (1-\eta)\,\mu^{\otimes2}(G)=\mu(X).
\]
By the rectangle rule, $\mu^{\otimes2}(G)\le\mu^{\otimes2}(X\times X)=\mu(X)^2<\infty$ (finite because $\mu$ is $[0,\infty)$--valued); thus the subtraction is valid. Moreover $\mu(X)\ge\mu(R)+\mu(I)=E_0>0$ by Axiom~III(a) and finite additivity, so the right side is positive. If $\eta=1$ the left side is $0$ while the right side is $\mu(X)>0$, a contradiction. Hence $\eta<1$. (This argument uses neither Axiom~II nor the support collapse of Theorem~\ref{thm:global-collapse}: it needs only the $B=X$ coupling law, the rectangle bound, and positivity from Axiom~III(a).)
\end{proof}

\begin{proposition}[Finite--additive feasibility bound]\label{prop:feasibility-bound}
For every admissible structural model,
\[
E_0=\mu(X)\ge\frac{1}{1-\eta},\qquad\text{equivalently}\qquad \eta\le 1-\frac{1}{E_0}.
\]
In particular, if $\mu(X)=1$ then $\eta=0$. The bound $\mu(X)\ge(1-\eta)^{-1}$ uses only the $B=X$ coupling law and the rectangle bound; the identification $E_0=\mu(X)$ uses in addition the collapse $\mu(X)=E_0$ (Theorem~\ref{thm:global-collapse}).
\end{proposition}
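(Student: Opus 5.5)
The plan is to start from the identity already obtained in the proof of Proposition~\ref{prop:eta-lt-one}. Applying the coupling law (Axiom~III(c)) with $B=X$, and using $(X\times X)\cap G=G$ and $\Pi_R^{-1}(X)=X$, gives
\[
(1-\eta)\,\mu^{\otimes2}(G)=\mu(X),
\]
and Proposition~\ref{prop:eta-lt-one} tells us $1-\eta>0$. It therefore suffices to bound $\mu^{\otimes2}(G)$ from above by something proportional to $\mu(X)^2$ and from below by $\mu(X)$.

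The upper bound comes from monotonicity of the finitely additive charge: $G\subseteq X\times X$ with both sets in $\mathcal A\otimes\mathcal A$, so $\mu^{\otimes2}(G)\le\mu^{\otimes2}(X\times X)=\mu(X)^2$ by the rectangle rule. Combining this with the displayed identity gives
\[
\mu(X)=(1-\eta)\mu^{\otimes2}(G)\le(1-\eta)\mu(X)^2 .
\]
By Axiom~III(a) and finite additivity, $\mu(X)\ge E_0>0$, so we may divide by $\mu(X)$ to obtain $\mu(X)\ge(1-\eta)^{-1}$. This step uses only the $B=X$ coupling law, the rectangle bound, and positivity, as the statement claims.

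To pass from $\mu(X)$ to $E_0$, we invoke Theorem~\ref{thm:global-collapse}, which gives $\mu(X)=\mu(R)=E_0$. Then $E_0\ge(1-\eta)^{-1}$, and rearranging (with $E_0>0$ and $1-\eta>0$) gives $1-\eta\ge 1/E_0$, i.e.\ $\eta\le 1-1/E_0$.

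For the special case, suppose $\mu(X)=1$. The bound gives $1\ge(1-\eta)^{-1}$, hence $1-\eta\ge1$, so $\eta\le 0$; together with $\eta\in[0,1]$ this forces $\eta=0$.

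No step is a genuine obstacle. The only point needing care is justifying the monotonicity $\mu^{\otimes2}(G)\le\mu^{\otimes2}(X\times X)$: write $X\times X=G\sqcup((X\times X)\setminus G)$, note that both pieces lie in the algebra $\mathcal A\otimes\mathcal A$, and use nonnegativity of $\mu^{\otimes2}$.
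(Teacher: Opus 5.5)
Your proposal is correct and follows essentially the same route as the paper: the $B=X$ identity $(1-\eta)\,\mu^{\otimes2}(G)=\mu(X)$, the rectangle bound $\mu^{\otimes2}(G)\le\mu(X)^2$, division by $\mu(X)\ge E_0>0$, and collapse to identify $\mu(X)=E_0$. Your remark about also needing a lower bound on $\mu^{\otimes2}(G)$ is superfluous, since it is never used. Your note that positivity from Axiom~III(a) is needed is accurate; the paper uses it too, implicitly, when it divides by $\mu(X)$.
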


\begin{proof}
From the proof of Proposition~\ref{prop:eta-lt-one}, $\mu^{\otimes2}(G)=\mu(X)/(1-\eta)$. Since $\mu^{\otimes2}(G)\le\mu(X)^2$, we get $\mu(X)/(1-\eta)\le\mu(X)^2$, i.e.\ $1/(1-\eta)\le\mu(X)=E_0$. If $\mu(X)=1$ then $1/(1-\eta)\le 1$, forcing $\eta\le 0$, hence $\eta=0$.
\end{proof}

\begin{remark}[$\sigma$--additivity is not the essential issue]\label{rem:fa-essential}
Proposition~\ref{prop:feasibility-bound} shows that the obstruction to $\eta\neq 0$ under probability normalization $\mu(X)=1$ already appears for finitely additive probability measures: it is a consequence of the $B=X$ instance of the coupling law together with the rectangle rule, and does not require $\sigma$--additivity. The essential condition for nontrivial $\eta\neq 0$ is therefore the non--probability normalization $E_0>1$, not any countable--additivity assumption. Section~\ref{sec:normalization} isolates the genuinely $\sigma$--additive content (product--measure uniqueness and countable block formulas).
\end{remark}

\subsection{Universal load rigidity}

We now show that the two--point load is completely determined by $\mu$ and $\eta$. The proof is an elementary self--substitution using only collapse (Theorem~\ref{thm:global-collapse}), preimage idempotence (Proposition~\ref{prop:preimage-idempotence}), and $\eta<1$; in particular it does not use the Banach fixed--point theorem of Section~\ref{sec:fixed-point}.

\begin{corollary}[Universal load rigidity]\label{cor:rigidity}
For every admissible structural model and every $B\in\mathcal A$,
\[
\mu^{\otimes2}\bigl((B\times X)\cap G\bigr)=\frac{\mu(B)}{1-\eta}.
\]
In particular, the two--point load is not an independent datum once $\mu$ and $\eta$ are fixed.
\end{corollary}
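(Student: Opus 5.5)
The approach is the self--substitution advertised just before the statement. Fix $B\in\mathcal A$ and abbreviate $L(S):=\mu^{\otimes2}((S\times X)\cap G)$ for measurable $S$. This quantity is well defined. $\Pi_R^{-1}(B)\in\mathcal A$ by Remark~\ref{rem:preimage}. Each set $(S\times X)\cap G$ lies in $\mathcal A\otimes\mathcal A$ because $G\in\mathcal A\otimes\mathcal A$ and $S\times X$ is a rectangle. Moreover $0\le L(S)\le\mu(S)\mu(X)<\infty$ by monotonicity of the finitely additive charge and the rectangle rule. Axiom~III(c) then reads
\[
L(B)=\mu(B)+\eta\,L(\Pi_R^{-1}(B)).
\]

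The first step is to apply Axiom~III(c) to the measurable set $B':=\Pi_R^{-1}(B)$ instead of $B$:
\[
L(\Pi_R^{-1}(B))=\mu(\Pi_R^{-1}(B))+\eta\,L(\Pi_R^{-1}(\Pi_R^{-1}(B))).
\]
By preimage idempotence (Proposition~\ref{prop:preimage-idempotence}), the last term is $L(\Pi_R^{-1}(B))$ itself. By the final clause of Theorem~\ref{thm:global-collapse}, $\mu(\Pi_R^{-1}(B))=\mu(B)$. Hence $(1-\eta)\,L(\Pi_R^{-1}(B))=\mu(B)$. The subtraction is legitimate because $L(\Pi_R^{-1}(B))$ is finite, and Proposition~\ref{prop:eta-lt-one} gives $1-\eta>0$. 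We conclude $L(\Pi_R^{-1}(B))=\mu(B)/(1-\eta)$.

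The second step substitutes this value back into the coupling law for $B$:
\[
L(B)=\mu(B)+\frac{\eta\,\mu(B)}{1-\eta}=\frac{\mu(B)}{1-\eta},
\]
which is the claim. The remark that the load is not an independent datum is then immediate, since the right-hand side depends only on $(\mu,\eta)$.

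I expect no serious obstacle. The one point needing care is the use of invariance for an arbitrary $B\in\mathcal A$ rather than $B\subseteq R$. Axiom~III(b) by itself only covers $B\subseteq R$, and it is exactly the collapse $\mu(X\setminus R)=0$ that upgrades it to all measurable $B$. This is already packaged in Theorem~\ref{thm:global-collapse}, so I would cite it rather than re-derive it. Beyond that, I would only check finiteness before each cancellation and confirm that $\eta<1$ is used to divide.
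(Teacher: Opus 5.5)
Your proposal is correct and follows the paper's proof essentially verbatim. You apply the coupling law to $\Pi_R^{-1}(B)$, simplify using preimage idempotence and the final clause of Theorem~\ref{thm:global-collapse}, divide by $1-\eta>0$ (justified by finiteness from the rectangle bound), and substitute back into the coupling law for $B$.
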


\begin{proof}
Write $\mathcal C(B):=\mu^{\otimes2}((B\times X)\cap G)$. The coupling law reads
\[
\mathcal C(B)=\mu(B)+\eta\,\mathcal C(\Pi_R^{-1}(B))\qquad(\forall B\in\mathcal A).\tag{$\ast$}
\]
Apply $(\ast)$ to $\Pi_R^{-1}(B)$ in place of $B$:
\[
\mathcal C(\Pi_R^{-1}(B))=\mu(\Pi_R^{-1}(B))+\eta\,\mathcal C\bigl(\Pi_R^{-1}(\Pi_R^{-1}(B))\bigr).
\]
By preimage idempotence (Proposition~\ref{prop:preimage-idempotence}), $\Pi_R^{-1}(\Pi_R^{-1}(B))=\Pi_R^{-1}(B)$; and by collapse (Theorem~\ref{thm:global-collapse}), $\mu(\Pi_R^{-1}(B))=\mu(B)$. Hence
\[
\mathcal C(\Pi_R^{-1}(B))=\mu(B)+\eta\,\mathcal C(\Pi_R^{-1}(B)),
\]
so $(1-\eta)\,\mathcal C(\Pi_R^{-1}(B))=\mu(B)$. Since $\eta<1$ (Proposition~\ref{prop:eta-lt-one}) and $\mathcal C(\Pi_R^{-1}(B))\le\mu(X)^2<\infty$ (rectangle bound), we may divide:
\[
\mathcal C(\Pi_R^{-1}(B))=\frac{\mu(B)}{1-\eta}.
\]
Substituting back into $(\ast)$,
\[
\mathcal C(B)=\mu(B)+\eta\cdot\frac{\mu(B)}{1-\eta}=\frac{\mu(B)}{1-\eta}.
\]
\end{proof}

\begin{remark}[Global decoupling is automatic]\label{rem:decoupling-automatic}
One might consider the weaker hypothesis that the load on $\Pi_R^{-1}(B)\setminus B$ vanishes for $B\subseteq R$. Under collapse this is automatic: $\Pi_R^{-1}(B)\setminus B\subseteq X\setminus R$ (Proposition~\ref{prop:elementary}(i)), and any measurable $S\subseteq(X\setminus R)\times X$ satisfies $\mu^{\otimes2}(S)\le\mu(X\setminus R)\mu(X)=0$. Thus the load is supported, up to a $\mu^{\otimes2}$--null set, on $R\times R$, and Corollary~\ref{cor:rigidity} is the exact closed form. This subsumes any ``observable reduction on $R$'' statement.
\end{remark}

\subsection{Fixed-point reformulation}\label{sec:fixed-point}

We record the operator--theoretic reformulation of the coupling law. In view of Corollary~\ref{cor:rigidity} this is no longer needed to determine the load on an admissible model; we include it because $T_\eta$ is meaningful for arbitrary bounded charges (not only those arising from an admissible model), and because it exhibits the closed form as the unique fixed point of a contraction.

Throughout this subsection assume Axiom~I and $\mu(X)\le M<\infty$; Axiom~II is not needed, since the contraction and the fixed--point identification use only the measurability and idempotence of $\Pi_R$ (Axiom~I). Let
\[
\mathcal B:=\{\,f:\mathcal A\to\mathbb{R}\ \mid\ f\ \text{finitely additive and bounded}\,\},\qquad
\|f\|_\infty:=\sup_{B\in\mathcal A}|f(B)|.
\]
Under this norm $\mathcal B$ is a Banach space (see e.g.\ \cite{Rao,AliprantisBorder}); the cone $\mathcal B_+$ of nonnegative $f$ is closed.

\begin{definition}[Projection--update operator]\label{def:T-eta}
For $\eta\in[0,1)$, define $T_\eta:\mathcal B\to\mathcal B$ by $(T_\eta f)(B):=\mu(B)+\eta\,f(\Pi_R^{-1}(B))$.
\end{definition}

That $T_\eta$ maps $\mathcal B$ into itself, and preserves $\mathcal B_+$, follows as in the standard contraction setup: $\Pi_R^{-1}(B)\in\mathcal A$ (Remark~\ref{rem:preimage}), $T_\eta f$ is finitely additive in $B$, and $|T_\eta f(B)|\le M+\eta\|f\|_\infty$.

\begin{theorem}[Coupling law as unique fixed point]\label{thm:coupling-fixed-point}
Assume Axiom~I and $\mu(X)\le M<\infty$. For $\eta\in[0,1)$ the equation $f=T_\eta f$ has a unique solution $f_*\in\mathcal B$, given by
\[
f_*(B)=\mu(B)+\frac{\eta}{1-\eta}\,\mu(\Pi_R^{-1}(B))\qquad(B\in\mathcal A),
\]
with $f_*\in\mathcal B_+$ and $\|T_\eta^n f_0-f_*\|_\infty\le\eta^n\|f_0-f_*\|_\infty$ for every $f_0\in\mathcal B$.
\end{theorem}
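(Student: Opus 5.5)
Our plan is to apply the Banach fixed-point theorem to $T_\eta$ on the Banach space $(\mathcal B,\|\cdot\|_\infty)$, and then identify the fixed point by checking the closed form directly.

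\emph{Contraction.} For $f,g\in\mathcal B$ and $B\in\mathcal A$, we have $(T_\eta f-T_\eta g)(B)=\eta\,(f-g)(\Pi_R^{-1}(B))$. Since $\Pi_R^{-1}(B)\in\mathcal A$ by Remark~\ref{rem:preimage}, this gives $\|T_\eta f-T_\eta g\|_\infty\le\eta\|f-g\|_\infty$. As $\eta<1$, $T_\eta$ is a strict contraction, and the remark after Definition~\ref{def:T-eta} shows that it maps $\mathcal B$ into itself. Banach's theorem then yields a unique fixed point $f_*\in\mathcal B$. The iteration estimate $\|T_\eta^n f_0-f_*\|_\infty=\|T_\eta^n f_0-T_\eta^n f_*\|_\infty\le\eta^n\|f_0-f_*\|_\infty$ follows by induction from the contraction bound.

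\emph{Identification of $f_*$.} Set $g(B):=\mu(B)+\tfrac{\eta}{1-\eta}\mu(\Pi_R^{-1}(B))$. We first check that $g\in\mathcal B$:
\begin{itemize}[nosep]
\item finite additivity holds because $B\mapsto\Pi_R^{-1}(B)$ preserves disjoint unions and $\mu$ is finitely additive;
\item boundedness holds because $|g(B)|\le M(1+\tfrac{\eta}{1-\eta})$.
\end{itemize}
Next we compute $T_\eta g$ using preimage idempotence (Proposition~\ref{prop:preimage-idempotence}), $\Pi_R^{-1}(\Pi_R^{-1}(B))=\Pi_R^{-1}(B)$:
\[
(T_\eta g)(B)=\mu(B)+\eta\Bigl(\mu(\Pi_R^{-1}(B))+\tfrac{\eta}{1-\eta}\mu(\Pi_R^{-1}(B))\Bigr)=\mu(B)+\tfrac{\eta}{1-\eta}\mu(\Pi_R^{-1}(B)),
\]
which equals $g(B)$. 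By uniqueness, $f_*=g$.

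\emph{Positivity.} Since $\mu\ge0$, the formula shows directly that $f_*\in\mathcal B_+$. Alternatively, iterate from $f_0=0$: $T_\eta$ preserves the closed cone $\mathcal B_+$, so the limit lies in it.

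Most of this is routine. The main obstacle is confirming that $\mathcal B$ really is complete and that $T_\eta$ is well defined, in particular that $\Pi_R^{-1}(B)$ is measurable for all $B\in\mathcal A$ and not only for $B\subseteq R$. Measurability is supplied by Remark~\ref{rem:preimage} together with Axiom~I. Completeness follows because a uniform limit of finitely additive bounded functions is finitely additive and bounded; we will cite \cite{Rao} for this. Note that only Axiom~I is used: invariance III(b) is not needed, so the closed form retains the separate $\mu(\Pi_R^{-1}(B))$ term.
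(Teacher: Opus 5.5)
Your proposal is correct and follows the paper's proof: the Banach contraction on $(\mathcal B,\|\cdot\|_\infty)$ gives existence, uniqueness and the rate, and preimage idempotence $\Pi_R^{-1}\Pi_R^{-1}=\Pi_R^{-1}$ identifies the closed form. You also check explicitly that the candidate $g$ lies in $\mathcal B$ (finite additivity, and the bound $M/(1-\eta)$); the paper leaves this implicit, but it is needed before appealing to uniqueness.
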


\begin{proof}
For $f_1,f_2\in\mathcal B$, $|(T_\eta f_1-T_\eta f_2)(B)|=\eta|f_1(\Pi_R^{-1}(B))-f_2(\Pi_R^{-1}(B))|\le\eta\|f_1-f_2\|_\infty$, so $T_\eta$ is a strict contraction; Banach's theorem gives a unique fixed point and the convergence rate. To identify it, use preimage idempotence (Proposition~\ref{prop:preimage-idempotence}):
\begin{align*}
(T_\eta f_*)(B)
&=\mu(B)+\eta\Bigl[\mu(\Pi_R^{-1}(B))+\tfrac{\eta}{1-\eta}\mu\bigl(\Pi_R^{-1}(\Pi_R^{-1}(B))\bigr)\Bigr]\\
&=\mu(B)+\Bigl[\eta+\tfrac{\eta^2}{1-\eta}\Bigr]\mu(\Pi_R^{-1}(B))
=\mu(B)+\tfrac{\eta}{1-\eta}\mu(\Pi_R^{-1}(B))=f_*(B).
\end{align*}
Nonnegativity is immediate.
\end{proof}

\begin{corollary}[Consistency with rigidity]\label{cor:fixed-point-consistency}
On an admissible model, collapse (Theorem~\ref{thm:global-collapse}) gives $\mu(\Pi_R^{-1}(B))=\mu(B)$, so the fixed point simplifies to $f_*(B)=\mu(B)/(1-\eta)$, in agreement with Corollary~\ref{cor:rigidity}. The load $\mathcal C(B)=\mu^{\otimes2}((B\times X)\cap G)$ is finitely additive and bounded, and by Axiom~III(c) satisfies $\mathcal C=T_\eta\mathcal C$; hence it equals the unique fixed point $f_*$, giving a second proof of Corollary~\ref{cor:rigidity} for completeness.
\end{corollary}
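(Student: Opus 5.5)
The statement to prove is Corollary~\ref{cor:fixed-point-consistency}. It has two parts: the simplification of $f_*$ on an admissible model, and the identification $\mathcal C=f_*$. I will handle them in that order, relying on Theorem~\ref{thm:global-collapse}, Proposition~\ref{prop:eta-lt-one}, and Theorem~\ref{thm:coupling-fixed-point}.

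\emph{Simplification of $f_*$.} On an admissible model, Proposition~\ref{prop:eta-lt-one} gives $\eta\in[0,1)$, so $T_\eta$ is defined. We may take $M:=\mu(X)$, which equals $E_0$ by Theorem~\ref{thm:global-collapse}, so Theorem~\ref{thm:coupling-fixed-point} applies. The last clause of Theorem~\ref{thm:global-collapse} gives $\mu(\Pi_R^{-1}(B))=\mu(B)$ for every $B\in\mathcal A$. Substituting this into the closed form of $f_*$ yields
\[
f_*(B)=\mu(B)\Bigl(1+\tfrac{\eta}{1-\eta}\Bigr)=\frac{\mu(B)}{1-\eta}.
\]

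\emph{Verifying $\mathcal C\in\mathcal B$.} Three properties are needed.
\begin{enumerate}[label=(\roman*),nosep]
\item \emph{Well-defined.} For $B\in\mathcal A$, the set $B\times X$ is a rectangle and $G\in\mathcal A\otimes\mathcal A$, so $(B\times X)\cap G$ lies in the algebra $\mathcal A\otimes\mathcal A$ and $\mathcal C(B)$ is defined.
\item \emph{Finitely additive.} If $B_1,B_2\in\mathcal A$ are disjoint, then $(B_1\times X)\cap G$ and $(B_2\times X)\cap G$ are disjoint with union $((B_1\sqcup B_2)\times X)\cap G$. Finite additivity of $\mu^{\otimes2}$ then gives $\mathcal C(B_1\sqcup B_2)=\mathcal C(B_1)+\mathcal C(B_2)$.
\item \emph{Bounded.} Monotonicity of the nonnegative charge $\mu^{\otimes2}$ and the rectangle rule give $0\le\mathcal C(B)\le\mu^{\otimes2}(X\times X)=\mu(X)^2<\infty$.
\end{enumerate}
Hence $\mathcal C\in\mathcal B_+$.

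\emph{Identification.} Axiom~III(c) reads $\mathcal C(B)=\mu(B)+\eta\,\mathcal C(\Pi_R^{-1}(B))$ for all $B\in\mathcal A$, which is exactly $\mathcal C=T_\eta\mathcal C$. Here $\Pi_R^{-1}(B)\in\mathcal A$ by Remark~\ref{rem:preimage}, so the right-hand side makes sense. Uniqueness of the fixed point in Theorem~\ref{thm:coupling-fixed-point} then forces $\mathcal C=f_*=\mu(\cdot)/(1-\eta)$, which recovers Corollary~\ref{cor:rigidity}.

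The main obstacle is the verification that $\mathcal C\in\mathcal B$, since the uniqueness claim only applies inside that space. Within it, the one point I would double-check is monotonicity of $\mu^{\otimes2}$ on the algebra $\mathcal A\otimes\mathcal A$. This follows from finite additivity and nonnegativity once one notes that $(X\times X)\setminus S\in\mathcal A\otimes\mathcal A$ for every $S\in\mathcal A\otimes\mathcal A$.
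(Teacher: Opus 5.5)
Your proposal is correct and follows the same route as the paper. You use $\eta<1$ (Proposition~\ref{prop:eta-lt-one}) and the final clause of Theorem~\ref{thm:global-collapse} to simplify $f_*$, check $\mathcal C\in\mathcal B$, read Axiom~III(c) as $\mathcal C=T_\eta\mathcal C$, and invoke uniqueness from Theorem~\ref{thm:coupling-fixed-point}; your explicit checks of finite additivity, boundedness, and monotonicity of $\mu^{\otimes2}$ fill in details the paper only asserts.
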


\begin{remark}[Neumann series]\label{rem:neumann}
Iterating $T_\eta$ and using preimage idempotence, $f_*(B)=\sum_{n\ge0}\eta^n\mu(\Pi_R^{-n}(B))=\mu(B)+\frac{\eta}{1-\eta}\mu(\Pi_R^{-1}(B))$, the geometric sum $\tfrac{\eta}{1-\eta}$ accumulating the iterated projection corrections.
\end{remark}

\begin{remark}[The coupling law is not an ad-hoc postulate]\label{rem:coupling-not-ad-hoc}
The fixed--point form clarifies in what precise sense the coupling law of Axiom~III is a natural compatibility requirement rather than an arbitrary choice, once one fixes the \emph{form} of the update. The operator $T_\eta$ encodes exactly two conditions on a load $f:\mathcal A\to[0,\infty)$:
\begin{itemize}[nosep]
\item \emph{baseline agreement with $\mu$}: at $\eta=0$ one has $T_0 f=\mu$, so $f_*=\mu$ and the load coincides with the base measure;
\item \emph{self--similar propagation along $\Pi_R$}: the correction to $\mu$ propagates under the preimage map $\Pi_R^{-1}$, scaled by $\eta$.
\end{itemize}
We stress the precise scope of this uniqueness. The fixed--point formulation does \emph{not} prove that the coupling law of Axiom~III is the only conceivable compatibility law among all possible update rules; it shows only that \emph{once the affine self--similar update $T_\eta$ is adopted} (the two conditions above), the associated load is uniquely determined as $f_*$ once $(\mathcal A,\mu,\Pi_R,\eta)$ are fixed (Theorem~\ref{thm:coupling-fixed-point}). The determination of the load on an admissible model does not, however, rest on this choice of operator: by Corollary~\ref{cor:rigidity} the load equals $\mu(B)/(1-\eta)$ \emph{unconditionally}, regardless of how the coupling law is presented --- the propagation term, after collapse, no longer distinguishing $B$ from $\Pi_R^{-1}(B)$. It is on that unconditional rigidity, rather than on the particular operator $T_\eta$, that the closed form ultimately rests.
\end{remark}

\section{Models, consistency, and component quantization}\label{sec:models}

We first establish consistency by explicit models, then prove the measurable component quantization theorem, record the global block classification in the identity--retraction case, and give a non--atomic continuous model on $[0,1]$ showing that quantization persists outside the discrete setting.

\subsection{Finite models}

\begin{theorem}\label{thm:finite-model}
There exists a finite structural model. In particular, the axiom set is satisfiable in ZFC.
\end{theorem}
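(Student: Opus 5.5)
I will exhibit an explicit finite model and check the axioms directly. The simplest choice is $X=R=\{r\}$, $I=\varnothing$, $\mathcal A=\mathcal P(X)$, $\Pi_R=\mathrm{id}$, $G=X\times X=\Delta_X$, and $\mu(\{r\})=E_0$. Lemma~\ref{lem:atomic-product-charge} supplies the product charge $\mu^{\otimes2}$ with the rectangle rule. Since Corollary~\ref{cor:rigidity} forces $\mu^{\otimes2}(G)=\mu(X)/(1-\eta)$, and here $\mu^{\otimes2}(G)=E_0^2$, the scalars must satisfy $E_0^2=E_0/(1-\eta)$. Hence $E_0=(1-\eta)^{-1}$. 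The plan is therefore to take, for instance, $\eta=0$ and $E_0=1$, or more generally any $\eta\in[0,1)$ with $E_0=1/(1-\eta)$. The latter gives a one-parameter family that already includes $\eta\neq0$.

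The verification runs through the signature and then the axioms in order.

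\begin{itemize}[nosep]
\item \textbf{Pre-structural datum.} $R,I\in\mathcal A$ are disjoint, $R\cup I\in\mathcal A$, and $G\in\mathcal A\otimes\mathcal A$ because it is a rectangle.
\item \textbf{Axiom~I.} $\Pi_R=\mathrm{id}_R$ is trivially a retraction, and every preimage is measurable since $\mathcal A=\mathcal P(X)$.
\item \textbf{Axiom~II.} $G=X\times X$ is reflexive and symmetric, and $G\circ G=G$ holds because $X\neq\varnothing$.
\item \textbf{Axiom~III(a).} $\mu(R)+\mu(I)=E_0>0$.
\item \textbf{Axiom~III(b).} Since $\Pi_R^{-1}(B)=B$, invariance is trivial.
\item \textbf{Axiom~III(c).} For $B=\varnothing$ both sides vanish. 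For $B=X$ the left side is $E_0^2$ and the right side is $E_0+\eta E_0^2$. These agree exactly when $E_0(1-\eta)=1$, which is our choice of scalars.
\end{itemize}

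The only potential obstacle is this scalar matching in III(c). The necessary conditions already derived (Proposition~\ref{prop:feasibility-bound} and Corollary~\ref{cor:rigidity}) tell us in advance which choice works. If a richer example is wanted, I would instead take $n$ points with the total relation on $n$ disjoint blocks. Each block $C$ then needs $\mu(C)=(1-\eta)^{-1}$, anticipating the quantization result, and the coupling law is checked blockwise by additivity. For satisfiability in ZFC, however, the one-point model suffices, since every object in it is a finite set definable in ZFC.
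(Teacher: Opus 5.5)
Your proposal is correct. The one-point datum with $G=X\times X=\Delta_X$ and $\mu(\{r\})=E_0=(1-\eta)^{-1}$ satisfies every clause. Since $\mathcal A=\{\varnothing,X\}$, the coupling law only has to be checked at $B=\varnothing$ and $B=X$, exactly as you do. Your appeal to Corollary~\ref{cor:rigidity} only guides the choice of scalars and is not used in the verification, so there is no circularity.

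The overall strategy matches the paper's: an explicit finite atomic model with the canonical product charge of Lemma~\ref{lem:atomic-product-charge}, checked axiom by axiom. The model itself differs. The paper takes $X=R\sqcup I$ with $I\neq\varnothing$ and a non-identity retraction sending $I$ to a fixed $r_0$. It uses $G=\Delta_X$, $0/1$ weights vanishing on $I$, and $\eta=0$. There Axiom~I and invariance III(b) are checked nontrivially, via $\mu(I)=0$, so the example already displays the collapse of Theorem~\ref{thm:global-collapse}.

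Your model makes Axiom~I and III(b) vacuous ($\Pi_R=\mathrm{id}$, $I=\varnothing$). In exchange it covers every $\eta\in[0,1)$ at once. It is essentially the one-point version of what the paper does separately for $\eta\neq0$ in Theorem~\ref{thm:eta-nonzero-finite} and Proposition~\ref{prop:eta-nonzero-nondiag}. For the bare satisfiability claim, either construction suffices.
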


\begin{proof}
Let $X$ be finite and nonempty with disjoint nonempty $R,I\subseteq X$, $R\cup I=X$, $\mathcal A=\mathcal P(X)$. Fix $r_0\in R$ and set $\Pi_R(x)=x$ for $x\in R$, $\Pi_R(x)=r_0$ for $x\in I$; this is idempotent with $\Pi_R|_R=\mathrm{id}_R$, so Axiom~I holds. Let $G=\Delta_X$, giving Axiom~II. Choose $m(x)\in\{0,1\}$ with $m\equiv0$ on $I$ and $\sum_{x\in R}m(x)>0$, and $\mu(B)=\sum_{x\in B}m(x)$; then $\mu(I)=0$, $\mu(R)>0$, and $E_0:=\mu(X)=\mu(R)>0$. Let $\mu^{\otimes2}$ be the canonical atomic product charge induced by $m$ (Lemma~\ref{lem:atomic-product-charge}). For $B\subseteq R$, $\Pi_R^{-1}(B)$ is $B$ or $B\cup I$, and $\mu(I)=0$ gives invariance. With $\eta=0$ and $m(x)^2=m(x)$,
\[
\mu^{\otimes2}((B\times X)\cap G)=\sum_{x\in B}m(x)^2=\mu(B),
\]
so the coupling law holds. All axioms hold.
\end{proof}

\subsection{Finite models with \texorpdfstring{$\eta\neq 0$}{eta neq 0}}

\begin{theorem}[Finite model with $\eta\neq 0$]\label{thm:eta-nonzero-finite}
Let $X=\{r_0,r_1,i\}$, $\mathcal A=\mathcal P(X)$, $R=\{r_0,r_1\}$, $I=\{i\}$, $\Pi_R|_R=\mathrm{id}_R$, $\Pi_R(i)=r_0$, $G=\Delta_X$. Fix $\eta\in(0,1)$ and set $m(i)=0$, $m(r_0)=m(r_1)=\tfrac{1}{1-\eta}$, $\mu(B)=\sum_{x\in B}m(x)$, with $\mu^{\otimes2}$ the canonical atomic product charge induced by $m$ (Lemma~\ref{lem:atomic-product-charge}). Then all axioms hold, with $E_0=\tfrac{2}{1-\eta}$.
\end{theorem}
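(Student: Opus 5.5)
The plan is to verify the pre-structural datum conditions and then Axioms~I, II, III(a)--(c) one at a time. All of them reduce to finite sums over the three points $r_0,r_1,i$, with weights $m(r_0)=m(r_1)=c:=\tfrac1{1-\eta}$ and $m(i)=0$.

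First, the datum conditions. $\mathcal A=\mathcal P(X)$ is an algebra. $R,I$ are disjoint and $R\cup I=X\in\mathcal A$. Since $\mathcal A\otimes\mathcal A=\mathcal P(X\times X)$ for finite $X$ (singletons of $X\times X$ are rectangles), we have $G\in\mathcal A\otimes\mathcal A$. By Lemma~\ref{lem:atomic-product-charge}, $\mu^{\otimes2}$ is a finitely additive product charge obeying the rectangle rule.

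Next, Axioms~I and II. $\Pi_R|_R=\mathrm{id}_R$ holds by definition, and preimage measurability is trivial because every subset is measurable. For Axiom~II, $G=\Delta_X$ is reflexive and symmetric, and $\Delta_X\circ\Delta_X=\Delta_X$. For III(a), $\mu(R)+\mu(I)=2c+0=\tfrac{2}{1-\eta}=E_0>0$. For III(b), take $B\subseteq R$. Then $\Pi_R^{-1}(B)$ is either $B$ (if $r_0\notin B$) or $B\cup\{i\}$ (if $r_0\in B$). Since $m(i)=0$, in both cases $\mu(\Pi_R^{-1}(B))=\mu(B)$.

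The only step with content is III(c). For any $S\subseteq X$,
\[
\mu^{\otimes2}\bigl((S\times X)\cap\Delta_X\bigr)=\sum_{x\in S}m(x)^2=c\,\mu(S),
\]
because $m(x)^2=c\,m(x)$ for each $x$: both sides are $c^2$ at $r_0,r_1$ and $0$ at $i$. Applying this to $S=B$ and to $S=\Pi_R^{-1}(B)$, and using $\mu(\Pi_R^{-1}(B))=\mu(B)$ for every $B$ (which is the computation above applied to $B\cap R$, the point $i$ being null), the required identity becomes
\[
c\,\mu(B)=\mu(B)+\eta\,c\,\mu(B).
\]
This holds because $c(1-\eta)=1$. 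The crux is exactly this choice of weights, $m^2=c\,m$ with $c=(1-\eta)^{-1}$, which matches the rigidity value of Corollary~\ref{cor:rigidity}. As a consistency check, Proposition~\ref{prop:feasibility-bound} is satisfied: $E_0=\tfrac{2}{1-\eta}\ge\tfrac1{1-\eta}$.
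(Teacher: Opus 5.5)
Your proposal is correct and follows the paper's proof: both use the vanishing weight of the point $i$ to reduce the coupling law to the pointwise identity $(1-\eta)m(x)^2=m(x)$, i.e.\ $c(1-\eta)=1$. You are more explicit than the paper, for instance in checking $G\in\mathcal A\otimes\mathcal A$ and invariance $\mu(\Pi_R^{-1}(B))=\mu(B)$ on all of $\mathcal A$, but the argument is the same.
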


\begin{proof}
Axioms~I,~II are immediate; conservation holds by the choice of $E_0$; $m(i)=0$ gives invariance. For the coupling law, $\mu^{\otimes2}((B\times X)\cap G)=\sum_{x\in B}m(x)^2$, and the contribution of $i$ vanishes, so it reduces to $(1-\eta)m(r)^2=m(r)$ for $r\in R$, which holds for $m(r)=\tfrac{1}{1-\eta}$.
\end{proof}

\begin{proposition}[Non-diagonal $\eta\neq0$ model]\label{prop:eta-nonzero-nondiag}
Let $X=\{r_0,r_1\}$, $\mathcal A=\mathcal P(X)$, $R=X$, $I=\varnothing$, $\Pi_R=\mathrm{id}_X$, $G=X\times X$, $\eta\in(0,1)$, $m(r_0)=m(r_1)=\tfrac{1}{2(1-\eta)}$, $\mu(B)=\sum_{x\in B}m(x)$, and $\mu^{\otimes2}$ the canonical atomic product charge induced by $m$ (Lemma~\ref{lem:atomic-product-charge}). Then all axioms hold with $E_0=\mu(X)=\tfrac{1}{1-\eta}$, and $G$ is non-diagonal.
\end{proposition}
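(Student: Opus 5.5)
The plan is a direct verification of each axiom for the datum, with the coupling law carrying the only real computation. First, the pre-structural requirements. $X=\{r_0,r_1\}$ is finite and $\mathcal A=\mathcal P(X)$, so $\mathcal A\otimes\mathcal A=\mathcal P(X\times X)$ and $G=X\times X$ is measurable. $R=X$ and $I=\varnothing$ are disjoint and measurable. The function $\mu^{\otimes2}$ is a product charge by Lemma~\ref{lem:atomic-product-charge}.

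For Axiom~I, $\Pi_R=\mathrm{id}_X$ restricts to $\mathrm{id}_R$ because $R=X$, and preimages of subsets of $R$ are the sets themselves, so they are measurable. For Axiom~II, the total relation $X\times X$ is reflexive and symmetric. It is also idempotent: $(X\times X)\circ(X\times X)\subseteq X\times X$, and conversely any $(x,z)$ factors through $y=x$. It is non-diagonal because $(r_0,r_1)\in G$ and $r_0\neq r_1$.

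For Axiom~III(a), take $E_0:=\mu(X)=2\cdot\tfrac{1}{2(1-\eta)}=\tfrac{1}{1-\eta}>0$. Then $\mu(R)+\mu(I)=\mu(X)+0=E_0$. Axiom~III(b) holds trivially, since $\Pi_R^{-1}(B)=B$.

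The only substantive step is Axiom~III(c). Since $\Pi_R^{-1}(B)=B$, the law reduces to $(1-\eta)\,\mu^{\otimes2}((B\times X)\cap G)=\mu(B)$ for every $B\subseteq X$. Because $G=X\times X$, we have $(B\times X)\cap G=B\times X$. The rectangle rule then gives $\mu^{\otimes2}(B\times X)=\mu(B)\mu(X)=\mu(B)/(1-\eta)$. Multiplying by $1-\eta$ yields exactly $\mu(B)$, for all $B$, including $B=\varnothing$.

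The one point to get right is matching the normalization. The total mass $\mu(X)$ must equal $(1-\eta)^{-1}$, which is why each atom carries $\tfrac{1}{2(1-\eta)}$. This is also consistent with the feasibility bound of Proposition~\ref{prop:feasibility-bound}, which holds with equality here. No other obstacle is expected.
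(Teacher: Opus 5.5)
Your proposal is correct and takes essentially the same approach as the paper: a direct check in which the only substantive step is the coupling law. There, $(B\times X)\cap G=B\times X$ and the rectangle rule reduce the law to $(1-\eta)\mu(X)=1$. Your version is slightly more complete than the paper's: you verify Axioms~I and~II and non-diagonality explicitly, and by multiplying through by $1-\eta$ instead of dividing by $\mu(B)$ you avoid the paper's restriction to $\mu(B)>0$.
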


\begin{proof}
Invariance is trivial. For $B\subseteq X$, $(B\times X)\cap G=B\times X$, so $\mu^{\otimes2}((B\times X)\cap G)=\mu(B)\mu(X)$, and the coupling law $\mu(B)\mu(X)=\mu(B)+\eta\mu(B)\mu(X)$ reduces (for $\mu(B)>0$) to $(1-\eta)\mu(X)=1$, true by construction.
\end{proof}

\subsection{Countable model}

\begin{proposition}\label{prop:countable-model}
There exists a structural model with $X$ countably infinite.
\end{proposition}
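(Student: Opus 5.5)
The plan is to build the model so that every verification reduces to the finite computations already made in Proposition~\ref{prop:eta-nonzero-nondiag} and Theorem~\ref{thm:finite-model}. I will concentrate all the mass on a single point of $R$ and take $G$ to be the total relation. This sidesteps the measurability problem flagged in the version note: for infinite $X$ the diagonal need not lie in the algebra $\mathcal A\otimes\mathcal A$, but $X\times X$ is a rectangle and so lies there trivially.

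\emph{Concrete choice.} Let $X=\mathbb{N}$, $\mathcal A=\mathcal P(\mathbb{N})$, $R=\{0\}$, $I=\mathbb{N}\setminus\{0\}$, so $R\cup I=X\in\mathcal A$. Let $\Pi_R$ be the constant map $x\mapsto 0$, and let $G=X\times X$. Fix $\eta\in[0,1)$; the case $\eta=0$ already suffices for the statement. Set $m(0)=\tfrac{1}{1-\eta}$ and $m(n)=0$ for $n\ge1$. Put $\mu(B)=\sum_{x\in B}m(x)$ and let $\mu^{\otimes2}$ be the canonical atomic product charge. Lemma~\ref{lem:atomic-product-charge} applies because the total mass $\sum_x m(x)=\tfrac1{1-\eta}$ is finite, so it gives finite additivity and the rectangle rule. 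Finally set $E_0:=\mu(X)=\tfrac1{1-\eta}$.

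\emph{Verification of the axioms, in order.}
\begin{itemize}[nosep]
\item Axiom~I: $\Pi_R|_R=\mathrm{id}_R$ holds trivially, and preimage measurability is automatic since $\mathcal A=\mathcal P(X)$.
\item Axiom~II: the total relation is reflexive and symmetric, and $G\circ G=G$ holds because $X\neq\varnothing$.
\item Axiom~III(a): $\mu(R)+\mu(I)=m(0)+0=E_0>0$.
\item Axiom~III(b): the only measurable $B\subseteq R$ are $\varnothing$ and $\{0\}$. Their preimages are $\varnothing$ and $X$, and $\mu(X)=m(0)=\mu(\{0\})$, so invariance holds.
\item Axiom~III(c): for every $B$ we have $(B\times X)\cap G=B\times X$, so the left side equals $\mu(B)\mu(X)$. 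Also $\Pi_R^{-1}(B)$ is $X$ if $0\in B$ and $\varnothing$ otherwise, and in both cases $\mu(\Pi_R^{-1}(B))=\mu(B)$. The law therefore reduces to $\mu(B)\mu(X)=\mu(B)+\eta\,\mu(B)\mu(X)$. This is trivial when $\mu(B)=0$, and otherwise it is $(1-\eta)\mu(X)=1$, which holds by the choice of $m(0)$.
\end{itemize}

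The only delicate point is the membership $G\in\mathcal A\otimes\mathcal A$ for infinite $X$. Choosing $G=X\times X$ resolves it outright, since a rectangle is a generator of the product algebra. Everything else is routine bookkeeping, because the coupling identity collapses to the same one-line scalar equation as in Proposition~\ref{prop:eta-nonzero-nondiag}.
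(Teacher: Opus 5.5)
Your construction is correct and is essentially the paper's own: a one-point representative sector, the constant retraction, the total relation $G=X\times X$ (chosen precisely so that $G\in\mathcal A\otimes\mathcal A$ as a rectangle), and all mass on the representative point. The only difference is that the paper fixes $\eta=0$ and $m(r)=1$, whereas your choice $m(0)=\tfrac{1}{1-\eta}$ yields a countably infinite model for every $\eta\in[0,1)$, by the same scalar identity $(1-\eta)\mu(X)=1$ as in Proposition~\ref{prop:eta-nonzero-nondiag}.
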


\begin{proof}
Take $X=\{r\}\sqcup Y$ with $Y$ countably infinite, $R=\{r\}$, $I=Y$, $\mathcal A=\mathcal P(X)$, and $\Pi_R\equiv r$ (so $\Pi_R$ is idempotent with $\Pi_R|_R=\mathrm{id}_R$, giving Axiom~I). Define point masses $m(r)=1$ and $m(y)=0$ for $y\in Y$, so $\mu(B)=\sum_{x\in B}m(x)$ satisfies $\mu(\{r\})=1$, $\mu(Y)=0$, and $\sum_{x\in X}m(x)=1<\infty$; let $\mu^{\otimes2}$ be the canonical atomic product charge induced by $m$ (Lemma~\ref{lem:atomic-product-charge}). Put $G=X\times X$, $E_0=1$, $\eta=0$.

Crucially, $G=X\times X$ is a single rectangle, hence $G\in\mathcal A\otimes\mathcal A$ without any countable--union argument; this is why we take the total relation rather than a diagonal (which, for infinite $Y$, need not lie in the finitely generated product algebra $\mathcal A\otimes\mathcal A$). Axiom~II holds since $X\times X$ is reflexive, symmetric, and idempotent. For Axiom~III: conservation gives $\mu(R)+\mu(I)=1+0=1=E_0$; invariance holds since $\Pi_R^{-1}(\{r\})=X$ has $\mu(X)=1=\mu(\{r\})$; and for the coupling law, $(B\times X)\cap G=B\times X$, so by the rectangle rule
\[
\mu^{\otimes2}((B\times X)\cap G)=\mu(B)\,\mu(X)=\mu(B)\qquad(\forall B\subseteq X),
\]
which is the $\eta=0$ coupling law. Thus $\mathcal M$ is a countable structural model. (Consistently with Theorem~\ref{thm:global-collapse}, $\mu(X\setminus R)=\mu(Y)=0$.)
\end{proof}

\subsection{Measurable component quantization}

The collapse theorem upgrades the quantization phenomenon from the identity--retraction case to \emph{every} admissible model.

\begin{theorem}[Measurable component quantization]\label{thm:component-quantization}
Let $\mathcal M$ be an admissible structural model and let $C\in\mathcal A$ be a measurable $G$--equivalence class. Then
\[
\mu(C)=0\qquad\text{or}\qquad\mu(C)=\frac{1}{1-\eta}.
\]
\end{theorem}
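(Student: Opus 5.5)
The plan is to compute the load $\mu^{\otimes2}((C\times X)\cap G)$ in two ways and compare.

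\emph{First evaluation.} By universal load rigidity (Corollary~\ref{cor:rigidity}) applied to $B=C$,
\[
\mu^{\otimes2}((C\times X)\cap G)=\frac{\mu(C)}{1-\eta}.
\]

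\emph{Second evaluation.} By Axiom~II and Proposition~\ref{prop:elementary}(iii), $G$ is an equivalence relation. Since $C$ is an equivalence class, for $x\in C$ we have $(x,y)\in G$ if and only if $y\in C$. Hence
\[
(C\times X)\cap G=C\times C .
\]
This is a rectangle with $C\in\mathcal A$, so it lies in $\mathcal A\otimes\mathcal A$, and the rectangle rule gives $\mu^{\otimes2}(C\times C)=\mu(C)^2$.

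\emph{Conclusion.} Equating the two evaluations gives
\[
\mu(C)^2=\frac{\mu(C)}{1-\eta}.
\]
Here $\eta<1$ by Proposition~\ref{prop:eta-lt-one}, so the division is legitimate. The quadratic $\mu(C)\bigl(\mu(C)-(1-\eta)^{-1}\bigr)=0$ then yields exactly the two values claimed.

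The only step needing care is the set identity $(C\times X)\cap G=C\times C$. This is where Axiom~II enters essentially. Reflexivity makes $C$ nonempty-consistent with being a class. Symmetry and transitivity, via $G\circ G=G$, ensure that every $G$-neighbour of a point of $C$ lies in $C$ and that all pairs in $C\times C$ lie in $G$.

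Measurability is no obstacle. The hypothesis $C\in\mathcal A$ makes $C\times C$ a generating rectangle, and $G\in\mathcal A\otimes\mathcal A$ is already part of the datum.

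Nothing else is needed. Collapse enters only implicitly, through Corollary~\ref{cor:rigidity}.
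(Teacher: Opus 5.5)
Your proof is correct and follows the paper's argument essentially verbatim. You use Axiom~II to get $(C\times X)\cap G=C\times C$, apply the rectangle rule to get $\mu(C)^2$, equate this with $\mu(C)/(1-\eta)$ from universal load rigidity (Corollary~\ref{cor:rigidity}), and solve the resulting quadratic.
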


\begin{proof}
By Proposition~\ref{prop:elementary}(iii), $G$ is an equivalence relation. For a $G$--equivalence class $C$, every $G$--partner of a point of $C$ again lies in $C$, so
\[
(C\times X)\cap G=C\times C.
\]
By the rectangle rule, $\mu^{\otimes2}((C\times X)\cap G)=\mu(C)^2$. By Universal Load Rigidity (Corollary~\ref{cor:rigidity}), the same quantity equals $\mu(C)/(1-\eta)$. Hence $\mu(C)^2=\mu(C)/(1-\eta)$, giving $\mu(C)=0$ or $\mu(C)=(1-\eta)^{-1}$.
\end{proof}

\begin{remark}\label{rem:quantization-scope}
Theorem~\ref{thm:component-quantization} applies to any \emph{measurable} class. Not every $G$--equivalence class need lie in $\mathcal A$; the hypothesis $C\in\mathcal A$ is what makes $\mu(C)$ and the rectangle rule available. In the identity--retraction case with finitely or countably many classes, all classes are measurable and one obtains the full classification of Theorem~\ref{thm:pi-id-classification}.
\end{remark}

\begin{corollary}[Integrality of the positive--mass block count]\label{cor:integrality}
Let $\mathcal M$ be an admissible structural model. Suppose there are finitely many positive--mass measurable $G$--equivalence classes $C_1,\dots,C_N\in\mathcal A$, and that their union is $\mu$--conull (i.e.\ $\mu(X\setminus\bigsqcup_{j}C_j)=0$). Then
\[
N=(1-\eta)\,E_0,
\]
and in particular $(1-\eta)E_0\in\mathbb{Z}_{>0}$ (the count is positive since $E_0>0$ and $\eta<1$). Consequently, under this finite positive--block exhaustion hypothesis, the parameters $\eta$ and $E_0$ cannot be chosen independently: they are tied through the integer block count.
\end{corollary}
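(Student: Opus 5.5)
The proof is a mass count using the quantization of Theorem~\ref{thm:component-quantization} together with the collapse identity $\mu(X)=E_0$ of Theorem~\ref{thm:global-collapse}.

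First, the classes $C_1,\dots,C_N$ are pairwise disjoint. Distinct equivalence classes of the equivalence relation $G$ (Proposition~\ref{prop:elementary}(iii)) do not meet. Their union $U:=\bigsqcup_j C_j$ lies in $\mathcal A$, since an algebra is closed under finite unions. Finite additivity then gives $\mu(U)=\sum_{j=1}^N\mu(C_j)$.

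Each $C_j$ is measurable and has positive mass. Theorem~\ref{thm:component-quantization} therefore forces $\mu(C_j)=(1-\eta)^{-1}$, which is well defined because $\eta<1$ by Proposition~\ref{prop:eta-lt-one}. Hence $\mu(U)=N/(1-\eta)$.

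Next, decompose $X=U\sqcup(X\setminus U)$. The conull hypothesis $\mu(X\setminus U)=0$ then gives $\mu(X)=\mu(U)$. By Theorem~\ref{thm:global-collapse}, $\mu(X)=E_0$. Combining these,
\[
E_0=\frac{N}{1-\eta},\qquad\text{so}\qquad N=(1-\eta)E_0 .
\]

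Finally, positivity: $N\ge0$ is an integer. Since $E_0>0$ and $1-\eta>0$, we get $N=(1-\eta)E_0>0$, so $(1-\eta)E_0\in\mathbb Z_{>0}$. Equivalently, $N=0$ would make $U=\varnothing$, so the conull hypothesis would give $\mu(X)=0$, contradicting $E_0>0$.

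No step here is a genuine obstacle. The only points to check are that the union is measurable and that the classes are disjoint, so that finite additivity applies; both follow at once from the algebra axioms and from $G$ being an equivalence relation.
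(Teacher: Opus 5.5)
Your proposal is correct and follows the paper's proof: quantization gives $\mu(C_j)=(1-\eta)^{-1}$, disjointness, finite additivity and conullness give $\mu(X)=N/(1-\eta)$, and collapse gives $\mu(X)=E_0$. Your positivity argument is slightly more careful than the paper's. The paper simply asserts $N\ge 1$, whereas you rule out $N=0$ via $E_0>0$ (an empty union cannot be conull when $\mu(X)>0$).
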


\begin{proof}
By Theorem~\ref{thm:component-quantization} each $C_j$ has $\mu(C_j)=(1-\eta)^{-1}$. The classes are disjoint and their union is $\mu$--conull, so finite additivity gives
\[
E_0=\mu(X)=\sum_{j=1}^N\mu(C_j)=\frac{N}{1-\eta},
\]
using $\mu(X)=E_0$ from Theorem~\ref{thm:global-collapse}. Hence $N=(1-\eta)E_0$, a positive integer because $N\ge1$.
\end{proof}

\begin{remark}[A necessary condition visible only after combining the axioms]\label{rem:integrality-meaning}
The integrality constraint $(1-\eta)E_0\in\mathbb{Z}_{>0}$ is a genuine \emph{a priori} restriction on the scalar data of an admissible model: given a target total mass $E_0$ and a propagation rate $\eta$, no admissible model with the above block structure exists unless $(1-\eta)E_0$ happens to be an integer. Neither subclause of Axiom~III sees this on its own --- conservation only fixes $E_0$, the coupling law only relates load to $\mu$ --- and it becomes visible only once collapse, rigidity, and quantization are combined. It is in this sense a counting consequence of the rigidity package rather than of any single axiom.
\end{remark}

\subsection{Global block classification in the identity--retraction case}

When $\Pi_R=\mathrm{id}_X$ and the classes are summable, the quantization assembles into a block formula.

\begin{theorem}[Classification when $\Pi_R=\mathrm{id}$]\label{thm:pi-id-classification}
Assume $R=X$, $I=\varnothing$, $\Pi_R=\mathrm{id}_X$, $\eta\in[0,1)$, and let $G$ be reflexive, symmetric, idempotent, with equivalence classes $(C_k)_{k\in K}$, so $G=\bigsqcup_k(C_k\times C_k)$. Let $\mu$ be finitely additive and $\mu^{\otimes2}$ a product charge (Definition~\ref{def:product-measure}). Assume that each class is measurable, $C_k\in\mathcal A$, and one of:
\begin{itemize}[nosep]
\item[\emph{(K--fin)}] $K$ is finite; or
\item[\emph{(K--ctbl)}] $K$ is countable, $\mathcal A$ is a $\sigma$--algebra, $\mu$ is $\sigma$--additive and $\sigma$--finite, and $\mu^{\otimes2}$ is the standard $\sigma$--additive product measure on the product $\sigma$--algebra $\mathcal A\,\overline{\otimes}\,\mathcal A$, in which the countable union $G=\bigsqcup_k C_k\times C_k$ lies; its restriction to the product algebra $\mathcal A\otimes\mathcal A$ is the product charge of Definition~\ref{def:product-measure}, and we use the same symbol $\mu^{\otimes2}$ for both.
\end{itemize}
Then for every measurable $B$ the \emph{block formula} holds,
\[
\mu^{\otimes2}((B\times X)\cap G)=\sum_{k\in K}\mu(B\cap C_k)\,\mu(C_k),
\]
and, consequently, the coupling law holds if and only if every positive--mass class satisfies $\mu(C_k)=(1-\eta)^{-1}$.
\end{theorem}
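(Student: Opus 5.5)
The plan is to prove the block formula first and then read off the equivalence from it.

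\emph{Block formula.} Since $G$ is an equivalence relation with classes $(C_k)_{k\in K}$, we have $(B\times X)\cap G=\bigsqcup_{k}\bigl((B\cap C_k)\times C_k\bigr)$. This is a disjoint union, because distinct classes are disjoint and so the pieces differ already in the first coordinate. Each piece is a rectangle of measurable sets, so the rectangle rule gives $\mu^{\otimes2}((B\cap C_k)\times C_k)=\mu(B\cap C_k)\mu(C_k)$.
\begin{itemize}[nosep]
\item Under (K--fin) the union is finite. It therefore lies in $\mathcal A\otimes\mathcal A$, and finite additivity of $\mu^{\otimes2}$ gives the formula.
\item Under (K--ctbl) the union is countable and lies in $\mathcal A\,\overline{\otimes}\,\mathcal A$. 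We use $\sigma$--additivity of the standard product measure, which agrees with $\mu\times\mu$ on rectangles, to sum the series. The sum is finite, being bounded by $\mu(X)^2$ when that is finite; in the $\sigma$--finite case both sides may be infinite, and we interpret them in $[0,\infty]$.
\end{itemize}

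\emph{Reducing the coupling law.} With $\Pi_R=\mathrm{id}_X$ we have $\Pi_R^{-1}(B)=B$, so Axiom~III(c) reads $(1-\eta)\mathcal C(B)=\mu(B)$, where $\mathcal C(B)=\mu^{\otimes2}((B\times X)\cap G)$. Write $\mu(B)=\sum_k\mu(B\cap C_k)$. This holds by finite additivity under (K--fin), and by $\sigma$--additivity under (K--ctbl), since $X=\bigsqcup_k C_k$. The coupling law then becomes
\[
\sum_k\mu(B\cap C_k)\bigl[(1-\eta)\mu(C_k)-1\bigr]=0\qquad(\forall B\in\mathcal A).
\]

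\emph{The two directions.}
\begin{itemize}[nosep]
\item ($\Leftarrow$) Suppose every positive--mass class satisfies $\mu(C_k)=(1-\eta)^{-1}$. Then every summand vanishes: either the bracket is zero, or $\mu(C_k)=0$, which forces $\mu(B\cap C_k)=0$ by monotonicity.
\item ($\Rightarrow$) Test with $B=C_j$. Then $\mu(C_j\cap C_k)=0$ for $k\ne j$, so the identity reduces to $\mu(C_j)[(1-\eta)\mu(C_j)-1]=0$. Hence $\mu(C_j)\in\{0,(1-\eta)^{-1}\}$, which is exactly Theorem~\ref{thm:component-quantization} specialized to this case.
\end{itemize}

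\emph{Main obstacle.} The algebra is trivial, so the only delicate point is measure-theoretic bookkeeping in the countable case. First, $G$ need not lie in the finitely generated algebra $\mathcal A\otimes\mathcal A$, so we must work with the $\sigma$--algebra extension. Second, the infinite sums must be handled with care when $\mu(X)=\infty$ under mere $\sigma$--finiteness. I would first note that if some class has $0<\mu(C_k)<\infty$, the pointwise argument above goes through termwise on $B=C_k$. For $\mu(C_k)=\infty$, the coupling law at $B=C_k$ would read $\infty=\infty$ and not pin down the mass. I would therefore check that the finite-valuedness of $\mu$ required in Definition~\ref{def:structural-datum} excludes this case, so that $\mu(X)<\infty$ and all series converge absolutely.
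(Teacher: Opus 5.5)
Your proof is correct and follows the paper's argument: you use the same disjoint decomposition $(B\times X)\cap G=\bigsqcup_k\bigl((B\cap C_k)\times C_k\bigr)$ summed by finite or $\sigma$--additivity, the test set $B=C_j$ for the forward direction, and the partition identity $\mu(B)=\sum_k\mu(B\cap C_k)$ (with zero--mass classes contributing nothing) for the converse. Your infinite--mass concern is settled by the paper's definition of a finitely additive measure as $[0,\infty)$--valued, which gives $\mu(X)<\infty$, so all the series converge and the termwise subtraction is legitimate.
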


\begin{proof}
For measurable $B$, $(B\times X)\cap G=\bigsqcup_k((B\cap C_k)\times C_k)$, a disjoint union of measurable rectangles (using $C_k\in\mathcal A$). Under~(K--fin) the finite additivity of $\mu^{\otimes2}$, and under~(K--ctbl) its $\sigma$--additivity on $\mathcal A\,\overline{\otimes}\,\mathcal A$, together with the rectangle rule, yield the block formula. Now suppose the coupling law holds. Since $\Pi_R=\mathrm{id}_X$ it reads $(1-\eta)\mu^{\otimes2}((B\times X)\cap G)=\mu(B)$; taking $B=C_j$ (measurable) gives $(1-\eta)\mu(C_j)^2=\mu(C_j)$, hence $\mu(C_j)\in\{0,(1-\eta)^{-1}\}$, so every positive--mass class satisfies $\mu(C_j)=(1-\eta)^{-1}$. Conversely, suppose every positive--mass class satisfies $\mu(C_k)=(1-\eta)^{-1}$. For a zero--mass class, $B\cap C_k\subseteq C_k$ gives $\mu(B\cap C_k)=0$ by monotonicity, so such classes contribute nothing to the block sum; by the same finite or countable additivity over the partition $(C_k)$ one has $\sum_{k:\,\mu(C_k)>0}\mu(B\cap C_k)=\mu(B)$. Hence the block formula gives
\[
\mu^{\otimes2}((B\times X)\cap G)=\sum_{k:\,\mu(C_k)>0}\frac{\mu(B\cap C_k)}{1-\eta}=\frac{\mu(B)}{1-\eta},
\]
which is the coupling law.
\end{proof}

\begin{remark}[On the summability hypothesis]\label{rem:block-summability}
The block formula involves a sum over $K$ and can fail in the purely finitely additive setting when $K$ is infinite, since countable disjoint unions need not be countably additive for finitely additive $\mu^{\otimes2}$. The hypotheses~(K--fin) or~(K--ctbl) guarantee it. Note that the \emph{pointwise} quantization $\mu(C)\in\{0,(1-\eta)^{-1}\}$ of Theorem~\ref{thm:component-quantization} requires \emph{no} summability hypothesis: it follows from rigidity applied to the single measurable set $C$.
\end{remark}

\begin{corollary}[Total mass, finitely many positive classes]\label{cor:pi-id-total-mass}
Under Theorem~\ref{thm:pi-id-classification} with finitely many classes $C_1,\dots,C_m$, all of positive mass,
\[
\mu(X)=\frac{m}{1-\eta},\qquad \mu^{\otimes2}(G)=\frac{m}{(1-\eta)^2},\qquad
\mu^{\otimes2}((B\times X)\cap G)=\frac{\mu(B)}{1-\eta}.
\]
\end{corollary}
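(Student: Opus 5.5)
The corollary is a direct specialization of Theorem~\ref{thm:pi-id-classification}, under hypothesis (K--fin) with $K=\{1,\dots,m\}$. The proof computes the three quantities in turn from the block formula and the characterization of the coupling law. I read the statement as referring to an admissible model in the identity--retraction setting, so that the coupling law holds.

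First, the classes $C_1,\dots,C_m$ partition $X$. Each has positive mass, so the ``only if'' direction of Theorem~\ref{thm:pi-id-classification} gives $\mu(C_k)=(1-\eta)^{-1}$ for every $k$. Theorem~\ref{thm:component-quantization} gives the same conclusion. Finite additivity over the partition then yields
\[
\mu(X)=\sum_{k=1}^m\mu(C_k)=\frac{m}{1-\eta}.
\]

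Second, since $G=\bigsqcup_k C_k\times C_k$ is a finite disjoint union of measurable rectangles, the rectangle rule and finite additivity of $\mu^{\otimes2}$ give
\[
\mu^{\otimes2}(G)=\sum_{k=1}^m\mu(C_k)^2=\frac{m}{(1-\eta)^2}.
\]
Equivalently, this is the block formula with $B=X$.

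Third, the load formula $\mu^{\otimes2}((B\times X)\cap G)=\mu(B)/(1-\eta)$ comes from the block formula. We have $\sum_k\mu(B\cap C_k)\mu(C_k)=(1-\eta)^{-1}\sum_k\mu(B\cap C_k)=\mu(B)/(1-\eta)$, using finite additivity of $\mu$ over the partition. Alternatively, it is Corollary~\ref{cor:rigidity} directly.

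None of these steps is a real obstacle. The only point needing care is the hypothesis. If the corollary is meant for arbitrary data satisfying the hypotheses of Theorem~\ref{thm:pi-id-classification} but not assuming the coupling law, then $\mu(C_k)=(1-\eta)^{-1}$ cannot be derived. In that case I would state explicitly that the coupling law (Axiom~III(c)) is assumed, that is, that the model is admissible.
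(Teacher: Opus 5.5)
Your proposal is correct and follows the paper's own proof essentially verbatim: it gets $\mu(C_j)=(1-\eta)^{-1}$ from the classification, sums over the partition $X=\bigsqcup_j C_j$ and over $G=\bigsqcup_j(C_j\times C_j)$, and cites Corollary~\ref{cor:rigidity} (or the block formula) for the load. Your caveat that the coupling law must be assumed is apt, since the paper's proof also relies on it, implicitly, when it asserts $\mu(C_j)=(1-\eta)^{-1}$ and invokes rigidity.
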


\begin{proof}
$\mu(C_j)=(1-\eta)^{-1}$ for each $j$; sum over the partition $X=\bigsqcup_j C_j$ and over $G=\bigsqcup_j(C_j\times C_j)$. The last identity is Corollary~\ref{cor:rigidity}.
\end{proof}

\subsection{A continuous Lebesgue model}\label{sec:continuous-model}

The models above are finite or atomic. To show that the axioms admit genuinely non--atomic, continuous models, and to illustrate how collapse and quantization act on a non--discrete measure, we give a model on $[0,1]$ built from Lebesgue measure. The collapse theorem does not forbid such models; rather, it pins down their structure precisely, as the construction makes explicit.

\begin{proposition}[Continuous model on the unit interval]\label{prop:continuous-model}
Fix an integer $m\ge1$ and $\eta\in[0,1)$. Let $X=[0,1]$ with the Borel $\sigma$--algebra $\mathcal A=\mathcal B([0,1])$, and let $\lambda$ be Lebesgue measure. Partition $[0,1]$ into $m$ Borel blocks $C_1,\dots,C_m$ of equal length $1/m$ (e.g.\ $C_k=[\tfrac{k-1}{m},\tfrac{k}{m})$, with $C_m$ closed at $1$). Define
\[
\mu:=\frac{m}{1-\eta}\,\lambda,\qquad
R:=X,\quad I:=\varnothing,\quad \Pi_R:=\mathrm{id}_X,\quad
G:=\bigsqcup_{k=1}^m (C_k\times C_k),
\]
let $\bar\mu^{\otimes2}$ be the standard product measure of $\mu$ on the product $\sigma$--algebra $\mathcal B([0,1])\,\overline{\otimes}\,\mathcal B([0,1])=\mathcal B([0,1]^2)$, and let $\mu^{\otimes2}$ denote its restriction to the product algebra $\mathcal A\otimes\mathcal A$ (the datum's product charge; note $G$, a finite union of rectangles, lies in $\mathcal A\otimes\mathcal A$), and set $E_0:=\mu(X)=\tfrac{m}{1-\eta}$. Then $\mathcal M=(X,\mathcal A,\mu,\mu^{\otimes2},R,I,\Pi_R,G,E_0,\eta)$ is an admissible structural model, $\mu$ is non--atomic, and each block satisfies the quantization value
\[
\mu(C_k)=\frac{1}{1-\eta}\qquad(k=1,\dots,m).
\]
\end{proposition}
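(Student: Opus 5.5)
The plan is to verify the pre-structural data first, then Axioms~I, II, III in turn, and finally the quantization and non-atomicity claims. For the data: $\mathcal A=\mathcal B([0,1])$ is an algebra. $\mu=\tfrac{m}{1-\eta}\lambda$ is a finite, $\sigma$--additive (hence finitely additive) $[0,\infty)$--valued measure. The standard product $\bar\mu^{\otimes2}$ exists and is unique because $\mu$ is finite, so it is $\sigma$--finite. Its restriction $\mu^{\otimes2}$ to $\mathcal A\otimes\mathcal A$ is finitely additive and satisfies the rectangle rule. $R=X$ and $I=\varnothing$ are disjoint and measurable. Each $C_k$ is Borel, so $G$ is a finite disjoint union of measurable rectangles and lies in $\mathcal A\otimes\mathcal A$. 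Finally $E_0=\tfrac{m}{1-\eta}>0$ and $\eta\in[0,1)$.

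Axiom~I is trivial because $\Pi_R=\mathrm{id}_X$ and every preimage is the set itself. For Axiom~II, $G=\bigsqcup_k C_k\times C_k$ is exactly the equivalence relation whose classes are the partition blocks, so it is reflexive, symmetric and transitive. Reflexivity then gives $G\subseteq G\circ G$, and transitivity gives $G\circ G\subseteq G$, so $G\circ G=G$. For Axiom~III, clause (a) reads $\mu(X)+\mu(\varnothing)=\tfrac{m}{1-\eta}\cdot 1=E_0$, and clause (b) is immediate from $\Pi_R^{-1}(B)=B$.

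The only substantive step is the coupling law (c). Rather than recompute it, I will invoke Theorem~\ref{thm:pi-id-classification} under hypothesis (K--fin). Its hypotheses hold: $R=X$, $\Pi_R=\mathrm{id}$, $\eta\in[0,1)$, $G$ is an idempotent equivalence relation with finitely many measurable classes $C_1,\dots,C_m$, and $\mu^{\otimes2}$ is a product charge. That theorem reduces the coupling law to checking that every positive-mass class has mass $(1-\eta)^{-1}$. Here $\lambda(C_k)=1/m$ (the closed endpoint at $1$ is $\lambda$--null), so $\mu(C_k)=\tfrac{m}{1-\eta}\cdot\tfrac1m=\tfrac1{1-\eta}$. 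This gives both (c) and the displayed quantization value.

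As a direct cross-check, for any Borel $B$ we have $(B\times X)\cap G=\bigsqcup_k (B\cap C_k)\times C_k$. Finite additivity and the rectangle rule then give
\[
\mu^{\otimes2}((B\times X)\cap G)=\sum_k \mu(B\cap C_k)\tfrac{1}{1-\eta}=\tfrac{\mu(B)}{1-\eta},
\]
which equals $\mu(B)+\eta\cdot\tfrac{\mu(B)}{1-\eta}$, the right-hand side of (c) when $\Pi_R^{-1}(B)=B$.

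Non-atomicity is inherited from Lebesgue measure: any Borel set $A$ with $\mu(A)>0$ contains, by continuity of $t\mapsto\lambda(A\cap[0,t])$, a Borel subset of strictly smaller positive measure. I expect no real obstacle. The one point needing care is that $G$ must lie in the \emph{algebra} $\mathcal A\otimes\mathcal A$ rather than merely the $\sigma$--algebra, and this holds because $m$ is finite.
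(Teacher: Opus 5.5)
Your proposal is correct and follows essentially the same route as the paper: Axioms~I, II and III(a),(b) are checked directly, and the coupling law is obtained from Theorem~\ref{thm:pi-id-classification} under (K--fin) once $\mu(C_k)=(1-\eta)^{-1}$ is computed. Your direct block-formula cross-check and your intermediate-value argument for non-atomicity are sound additions that the paper leaves implicit.
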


\begin{proof}
Axiom~I holds trivially ($\Pi_R=\mathrm{id}_X$), and $G$ is reflexive, symmetric, and idempotent as a disjoint union of full blocks $C_k\times C_k$, giving Axiom~II; here $G\in\mathcal A\otimes\mathcal A$ is a finite union of measurable rectangles. For Axiom~III, conservation gives $\mu(R)+\mu(I)=\mu(X)=\tfrac{m}{1-\eta}=E_0>0$, and invariance is trivial since $\Pi_R=\mathrm{id}_X$. Each $C_k$ has $\mu(C_k)=\tfrac{m}{1-\eta}\,\lambda(C_k)=\tfrac{m}{1-\eta}\cdot\tfrac1m=\tfrac{1}{1-\eta}$. Since $\Pi_R=\mathrm{id}_X$ and the classes are the $C_k$ (finite in number, hence (K--fin)), Theorem~\ref{thm:pi-id-classification} applies: every positive--mass class has mass $(1-\eta)^{-1}$, so the coupling law holds. Finally $\mu=\tfrac{m}{1-\eta}\lambda$ is non--atomic because $\lambda$ is.
\end{proof}

\begin{remark}[Collapse and quantization in the continuous setting]\label{rem:continuous-quantization}
This example shows that the rigidity theorems are not artifacts of discreteness. Collapse (Theorem~\ref{thm:global-collapse}) is consistent with a non--atomic $\mu$: here $R=X$, so $X\setminus R=\varnothing$ and collapse is vacuous, while $\mu$ itself remains continuous. Component quantization (Theorem~\ref{thm:component-quantization}) does bite, and in a structurally informative way: although $\mu$ is non--atomic, the \emph{measurable $G$--equivalence classes} cannot have arbitrary mass\,---\,each positive--mass class is forced to mass exactly $(1-\eta)^{-1}$. Consequently the number of positive--mass classes is rigidly bounded:
\[
\#\{k:\mu(C_k)>0\}=(1-\eta)\,\mu(X)=(1-\eta)E_0,
\]
which must be a nonnegative integer, and a positive one ($\in\mathbb{Z}_{>0}$) whenever any positive--mass class is present\,---\,consistently with Corollary~\ref{cor:integrality}. Thus, even on a continuum, the relation $G$ may partition the space into only finitely many positive--mass blocks (here $m$), each of identical mass, with the remainder $\mu$--null. The continuum supplies the points; the axioms quantize how $G$ may bundle them.
\end{remark}

\begin{remark}[A non--identity continuous variant]\label{rem:continuous-nonid}
One can also exhibit collapse non--vacuously in the continuous setting. Take $X=[0,2]$, $\mathcal A=\mathcal B([0,2])$, $R=[0,1]$, $I=(1,2]$, and
\[
\Pi_R(x)=x\ \ (x\in[0,1]),\qquad \Pi_R(x)=0\ \ (x\in(1,2]),
\]
which is idempotent with $\Pi_R|_R=\mathrm{id}_R$. Let
\[
\mu(B)=\frac{m}{1-\eta}\,\lambda(B\cap[0,1]),
\]
so $\mu$ is carried by $R$ and $\mu(I)=0$. Let the blocks $C_1,\dots,C_m\subseteq[0,1]$ be as in Proposition~\ref{prop:continuous-model}, and set
\[
G:=\Big(\bigsqcup_{k=1}^m (C_k\times C_k)\Big)\ \sqcup\ (I\times I).
\]
Take the same $m\ge1$ and $\eta\in[0,1)$ as in Proposition~\ref{prop:continuous-model}, let $\mu^{\otimes2}$ be the product measure of $\mu$ restricted to $\mathcal A\otimes\mathcal A$, and set $E_0:=\mu(X)=\tfrac{m}{1-\eta}$. Then $G\in\mathcal A\otimes\mathcal A$ as a finite union of rectangles, and $G$ is reflexive, symmetric, and idempotent, with equivalence classes $C_1,\dots,C_m,I$. Since $\mu(I)=0$, the class $I$ contributes no load, and the coupling law reduces to that of the representative core on $R$. Thus
\[
\mu(X\setminus R)=\mu(I)=0
\]
realizes collapse with a nontrivial hidden sector $I=(1,2]$ of positive set--theoretic size but zero $\mu$--mass, exactly as Theorem~\ref{thm:global-collapse} predicts; the core $\mathcal M|_R$ recovers Proposition~\ref{prop:continuous-model}.
\end{remark}


\section{Independence of the axioms}\label{sec:independence}

The collapse and rigidity results of Section~\ref{sec:collapse} hold for admissible models, i.e.\ when all three axioms are present. We now show that no axiom is redundant: the three axioms, and the three subclauses of Axiom~III, are mutually independent. This makes precise the division of labor noted in the introduction: support collapse is produced by subclause~III(b); the endpoint exclusion and load rigidity then use the coupling law~III(c) (with the idempotence of $\Pi_R$ from Axiom~I for rigidity) but not Axiom~II, while component quantization is where Axiom~II becomes essential; none of these is subsumed by the others.

Independence is established model-theoretically: for each $\mathsf X$ we exhibit a structure over the same signature satisfying the other axioms but failing $\mathsf X$.

\subsection{Separating models for Axioms I, II, III}

\begin{proposition}[Independence of Axiom I]\label{prop:indep-I}
There is a pre-structural datum satisfying Axioms~II,~III but failing Axiom~I.
\end{proposition}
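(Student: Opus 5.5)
The plan is to exhibit a small finite datum in which $\Pi_R$ fails the retraction property $\Pi_R|_R=\mathrm{id}_R$, while every remaining requirement holds. Measurability clauses are automatic if we take $\mathcal A=\mathcal P(X)$. The constraint to watch is that Axiom~III(b) must survive even though $\Pi_R$ moves points of $R$. I will take $X=R=\{r_0,r_1\}$, $I=\varnothing$, $\mathcal A=\mathcal P(X)$, and the swap $\Pi_R(r_0)=r_1$, $\Pi_R(r_1)=r_0$. This is a map $X\to R$ that is not the identity on $R$, so Axiom~I fails. Note that it is not even idempotent, which is harmless since only failure is needed.

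Next I choose the measure so that it is swap-invariant. Set $m(r_0)=m(r_1)=\tfrac12$ and $\mu(B)=\sum_{x\in B}m(x)$, and let $\mu^{\otimes2}$ be the canonical atomic product charge of Lemma~\ref{lem:atomic-product-charge}. For every $B\subseteq R$ we have $\mu(\Pi_R^{-1}(B))=\mu(B)$, because the swap preserves cardinality and the masses are equal; this gives III(b). Put $E_0:=\mu(X)=1$, which gives III(a).

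For Axiom~II and III(c) I take $G=X\times X$. It is reflexive, symmetric and idempotent, and as a rectangle it lies in $\mathcal A\otimes\mathcal A$. I also take $\eta=0$, which is forced anyway by Proposition~\ref{prop:feasibility-bound}. That bound holds here, since its proof uses only the $B=X$ coupling law and the rectangle bound, not Axiom~I. The coupling law then reads $\mu^{\otimes2}(B\times X)=\mu(B)\mu(X)=\mu(B)$, and this holds because $\mu(X)=1$.

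I do not expect a serious obstacle. The one point to check is that invariance and the coupling law do not secretly use Axiom~I. With $\eta=0$ the term involving $\Pi_R$ drops out of III(c), so that check is immediate. If one wanted a separating model with $\eta\neq0$, the same swap would work with $m\equiv\tfrac{1}{2(1-\eta)}$, following the computation in Proposition~\ref{prop:eta-nonzero-nondiag}. That computation does not use idempotence either, since $\mu^{\otimes2}((\Pi_R^{-1}(B)\times X)\cap G)=\mu(\Pi_R^{-1}(B))\mu(X)=\mu(B)\mu(X)$.
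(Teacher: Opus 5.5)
Your construction is correct and rests on the same idea as the paper's: swap two equal-mass points of $R$ so that III(b) survives while $\Pi_R|_R\neq\mathrm{id}_R$, and take $\eta=0$ so that the coupling law never involves $\Pi_R$. The paper uses $G=\Delta_X$ with $\{0,1\}$-valued masses ($E_0=2$) and an extra null point $c\in I$ sent to $a$; you use the total relation with $\mu(X)=1$ and $I=\varnothing$, which is only a cosmetic difference, and your $\eta\neq0$ variant is a small addition the paper does not give.
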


\begin{proof}
Let $X=\{a,b,c\}$, $\mathcal A=\mathcal P(X)$, $R=\{a,b\}$, $I=\{c\}$, and $\Pi_R(a)=b$, $\Pi_R(b)=a$, $\Pi_R(c)=a$. Then $\Pi_R|_R\ne\mathrm{id}_R$ and $\Pi_R\circ\Pi_R\ne\Pi_R$, so Axiom~I fails. Set $G=\Delta_X$ (Axiom~II). Let $m(a)=m(b)=1$, $m(c)=0$, with $\mu(B)=\sum_{x\in B}m(x)$ and $\mu^{\otimes2}$ the canonical atomic product charge induced by $m$ (Lemma~\ref{lem:atomic-product-charge}); set $\eta=0$, $E_0=2$. Conservation holds. Invariance: $\Pi_R^{-1}(\{a\})=\{b,c\}$ has mass $1=\mu(\{a\})$, $\Pi_R^{-1}(\{b\})=\{a\}$ has mass $1$, and $\Pi_R^{-1}(R)=X$ has mass $2$; so $\mu(\Pi_R^{-1}(B))=\mu(B)$ for $B\subseteq R$. The coupling law with $\eta=0$ and $G=\Delta_X$ reduces to $\sum_{x\in B}m(x)^2=\mu(B)$, which holds. Thus Axiom~III holds while Axiom~I fails.
\end{proof}

\begin{proposition}[Independence of Axiom II]\label{prop:indep-II}
There is a pre-structural datum satisfying Axioms~I,~III but failing Axiom~II.
\end{proposition}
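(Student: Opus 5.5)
We construct a small atomic counterexample in which $G$ breaks Axiom~II while the coupling law still holds. Take $\Pi_R=\mathrm{id}_X$, so Axiom~I holds trivially and III(b) is automatic. Then III(c) reduces to $(1-\eta)\,\mu^{\otimes2}((B\times X)\cap G)=\mu(B)$ for all $B$. The simplest choice is $\eta=0$, where the requirement becomes $\mu^{\otimes2}((B\times X)\cap G)=\mu(B)$.

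Concretely, let $X=\{a,b\}$, $\mathcal A=\mathcal P(X)$, $R=X$, $I=\varnothing$, and $\Pi_R=\mathrm{id}_X$. Use point masses $m(a)=m(b)=1$ and the canonical atomic product charge of Lemma~\ref{lem:atomic-product-charge}. Set $\eta=0$ and $E_0=2$. We need $G\subseteq X\times X$ such that each row $x$ carries total weight $\sum_{y:(x,y)\in G}m(x)m(y)=m(x)=1$. In other words, each row must contain exactly one pair.

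The candidate is $G=\{(a,b),(b,a)\}$, the swap relation. It is symmetric, but it is not reflexive, since $(a,a)\notin G$. It also fails idempotence, because $G\circ G=\Delta_X\ne G$. So Axiom~II fails.

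For the coupling law, each $x$ has exactly one partner of mass $1$. Hence by additivity $\mu^{\otimes2}((B\times X)\cap G)=\sum_{x\in B}1=\mu(B)$, which is III(c) with $\eta=0$. Conservation holds because $\mu(R)+\mu(I)=2=E_0$. Since $\mathcal A=\mathcal P(X)$ and $X$ is finite, $G$ automatically lies in $\mathcal A\otimes\mathcal A$.

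The only real point to watch is that the failure of Axiom~II must not accidentally break the row-weight condition. Functional graphs of bijections, such as this swap, keep row weights equal to single masses, which is why this choice works. An alternative would be a non-transitive reflexive relation, but balancing its row sums is harder. The swap example avoids that issue, so the verification is a routine check of each axiom clause.
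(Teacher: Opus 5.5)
Your proof is correct. With $m\equiv 1$, $\eta=0$ and $\Pi_R=\mathrm{id}_X$, each row of the swap relation $G=\{(a,b),(b,a)\}$ carries weight exactly $1=m(x)$, so III(c) holds for every $B\subseteq X$. Axiom~I, III(a) and III(b) are immediate, $G\in\mathcal A\otimes\mathcal A$ as a union of two singleton rectangles, and $G$ fails both reflexivity and idempotence, since $G\circ G=\Delta_X\neq G$.

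The paper uses the same overall strategy: a finite atomic model with identity retraction, $\eta=0$ and the canonical product charge. It chooses a different witness, $X=\{a,b,c\}$ with masses $1,0,1$ and $G$ the reflexive, symmetric relation linking $a$--$b$ and $b$--$c$ but not $a$--$c$. Putting zero mass on the hub $b$ makes every off-diagonal pair in $G$ weightless. So the row-balancing you describe as ``harder'' for a reflexive non-transitive relation is actually trivial.

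What the paper's choice buys is sharpness. Its $G$ fails \emph{only} the idempotence clause $G\circ G=G$. This shows that this clause specifically is not implied by Axioms~I and~III even when reflexivity and symmetry hold; it is the clause that makes $G$ an equivalence relation and drives component quantization. Your swap also fails reflexivity, so it establishes independence of Axiom~II only as a whole. That is, however, exactly what the statement asks.

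A minor point: your aside that graphs of bijections preserve row weights holds here only because $m\equiv 1$. In general the row weight of a permutation graph at $x$ is $m(x)\,m(\sigma(x))$.
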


\begin{proof}
Let $X=\{a,b,c\}$, $\mathcal A=\mathcal P(X)$, $R=X$, $I=\varnothing$, $\Pi_R=\mathrm{id}_X$ (Axiom~I). Let
\[
G=\{(a,a),(b,b),(c,c),(a,b),(b,a),(b,c),(c,b)\}.
\]
$G$ is reflexive and symmetric but not idempotent: $(a,b),(b,c)\in G$ yet $(a,c)\notin G$, so $G\circ G\supsetneq G$ and Axiom~II fails. Let $m(a)=1$, $m(b)=0$, $m(c)=1$, with $\mu(B)=\sum_{x\in B}m(x)$ and $\mu^{\otimes2}$ the canonical atomic product charge induced by $m$ (Lemma~\ref{lem:atomic-product-charge}); set $\eta=0$, $E_0=2$. Invariance is trivial. The only $G$--pairs with nonzero $m$--weight are $(a,a)$ and $(c,c)$, so $\mu^{\otimes2}((B\times X)\cap G)=\mu(B)$ for all $B$, and the coupling law holds. Thus Axiom~III holds while Axiom~II fails.
\end{proof}

\begin{remark}\label{rem:II-invisibility}
In this datum the failure $G\circ G\supsetneq G$ occurs at $(a,c)$, which carries weight $m(a)m(c)=1$ but lies outside $G$. The witness is set-theoretic, not measure-theoretic: Axiom~II is a structural statement, visible at the level of $G$ regardless of whether the coupling law detects it.
\end{remark}

\begin{proposition}[Independence of Axiom III]\label{prop:indep-III}
There is a pre-structural datum satisfying Axioms~I,~II but failing Axiom~III.
\end{proposition}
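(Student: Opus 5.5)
The plan is to exhibit an explicit finite pre-structural datum in which Axioms~I and~II hold by construction and the numerical data violate Axiom~III. The simplest choice makes only one subclause fail, say the coupling law~III(c), while~III(a) and~III(b) still hold. That way the datum also indicates which subclause is responsible.

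Take $X=\{a\}$, $\mathcal A=\mathcal P(X)$, $R=X$, $I=\varnothing$, $\Pi_R=\mathrm{id}_X$, and $G=X\times X=\Delta_X$. Then Axiom~I holds trivially, since $\Pi_R|_R=\mathrm{id}_R$ and preimages of subsets of $R$ are subsets of $X$, hence in $\mathcal A$. Axiom~II holds because $G$ is the diagonal, which is reflexive, symmetric, and idempotent. $G$ lies in $\mathcal A\otimes\mathcal A$ as a single rectangle.

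For the measure, set $m(a)=2$, $\mu(B)=\sum_{x\in B}m(x)$, and let $\mu^{\otimes2}$ be the canonical atomic product charge of Lemma~\ref{lem:atomic-product-charge}. Take $E_0=2$ and $\eta=0$. Conservation~III(a) gives $\mu(R)+\mu(I)=2=E_0>0$, and invariance~III(b) is trivial because $\Pi_R=\mathrm{id}_X$.

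The coupling law at $B=X$ would require $\mu^{\otimes2}(G)=\mu(X)$, but $\mu^{\otimes2}(G)=m(a)^2=4\neq 2=\mu(X)$. So~III(c) fails, and hence Axiom~III fails. Equivalently, the failure can be read off from Proposition~\ref{prop:feasibility-bound}: with $\eta=0$ the rigidity identity would force $\mu(X)^2=\mu(X)$, i.e.\ $\mu(X)=1$.

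There is no real obstacle here; the only care needed is to check that every pre-structural requirement is met ($R,I$ disjoint, $R\cup I\in\mathcal A$, $G\in\mathcal A\otimes\mathcal A$, $E_0>0$, $\eta\in[0,1]$). If a failure of a different subclause were preferred, one could instead keep $m(a)=1$ and set $E_0=3$, which breaks~III(a) while~III(b) and~III(c) hold.
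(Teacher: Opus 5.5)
Your datum is correct. Every pre-structural requirement is met, Axioms~I and~II hold, and III(a) and III(b) hold. III(c) fails at $B=X$, since $\mu^{\otimes2}(G)=4\neq 2=\mu(X)$.

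The paper's witness breaks a different subclause. It takes $X=\{r,i\}$, $R=\{r\}$, $I=\{i\}$, $\Pi_R\equiv r$, $G=\Delta_X$, unit masses, $E_0=2$, $\eta=0$. There (a) and (c) hold, but invariance III(b) fails: $\mu(\Pi_R^{-1}(\{r\}))=\mu(X)=2\neq 1$.

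That choice buys two things your datum does not. First, the same datum shows support collapse failing, $\mu(X\setminus R)=1$, which backs the paper's claim that III(b) drives collapse (Corollary~\ref{cor:minimality}). Second, it is reused verbatim as the independence witness for subclause~(b) in Proposition~\ref{prop:indep-III-internal}.

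Your one-point datum is more economical, with a trivial retraction and no hidden sector. In effect it is a witness for independence of~(c), playing the role of the paper's two-point $\eta=\tfrac12$ example there. Your alternative ($m(a)=1$, $E_0=3$) similarly matches the paper's (a)-witness in spirit. For the bare statement, either choice suffices.

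One side remark is slightly off, though your direct computation does not depend on it. The failure is not read off from Proposition~\ref{prop:feasibility-bound}: its bound $E_0\ge(1-\eta)^{-1}$ is actually satisfied here ($2\ge 1$). What would force $\mu(X)=1$ is the $B=X$ identity $(1-\eta)\,\mu^{\otimes2}(G)=\mu(X)$ (Proposition~\ref{prop:global-constraint}) combined with $G=X\times X$. Equivalently, it is the contrapositive of component quantization (Theorem~\ref{thm:component-quantization}) applied to the single class $X$.
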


\begin{proof}
Let $X=\{r,i\}$, $\mathcal A=\mathcal P(X)$, $R=\{r\}$, $I=\{i\}$, $\Pi_R(r)=\Pi_R(i)=r$ (Axiom~I), $G=\Delta_X$ (Axiom~II). Let $m(r)=m(i)=1$, with $\mu(B)=\sum_{x\in B}m(x)$ and $\mu^{\otimes2}$ the canonical atomic product charge induced by $m$ (Lemma~\ref{lem:atomic-product-charge}); set $\eta=0$, $E_0=2$ (conservation holds). Invariance fails: $B=\{r\}$ gives $\Pi_R^{-1}(\{r\})=X$, so $\mu(\Pi_R^{-1}(\{r\}))=2\ne1=\mu(\{r\})$. Hence Axiom~III fails. (Indeed, this datum violates collapse: $\mu(X\setminus R)=\mu(\{i\})=1\ne0$, exhibiting the necessity of III(b).)
\end{proof}

\subsection{Internal independence of Axiom III}\label{sec:indep-III-internal}

Axiom~III has subclauses (a) conservation, (b) invariance, (c) coupling law.

\begin{proposition}[Internal independence]\label{prop:indep-III-internal}
Subclauses (a), (b), (c) are mutually independent relative to Axioms~I,~II.
\end{proposition}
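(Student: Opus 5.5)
The plan is to argue model-theoretically, as in Propositions~\ref{prop:indep-I}--\ref{prop:indep-III}. For each subclause $\mathsf S\in\{\mathrm{(a)},\mathrm{(b)},\mathrm{(c)}\}$ I will exhibit a finite pre-structural datum with $\mathcal A=\mathcal P(X)$, an atomic $\mu$, and the canonical product charge of Lemma~\ref{lem:atomic-product-charge}. Each datum will satisfy Axioms~I and~II and the other two subclauses, but fail $\mathsf S$. Working in the atomic setting makes the rectangle rule and measurability automatic, so only the three subclauses need checking.

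\emph{Failing (a) only.} Start from the finite model of Theorem~\ref{thm:finite-model}, for instance $X=R=\{r\}$, $I=\varnothing$, $\Pi_R=\mathrm{id}$, $G=\Delta_X$, $m(r)=1$, $\eta=0$. Then replace the scalar $E_0$ by $E_0=2$. This is still a legitimate pre-structural datum, since we only need $E_0\in(0,\infty)$. Clauses (b) and (c), and Axioms~I and~II, do not mention $E_0$, so they still hold. Conservation fails, because $\mu(R)+\mu(I)=1\neq 2$.

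\emph{Failing (b) only.} Reuse the datum of Proposition~\ref{prop:indep-III}: $X=\{r,i\}$, $R=\{r\}$, $I=\{i\}$, $\Pi_R\equiv r$, $G=\Delta_X$, $m(r)=m(i)=1$, $\eta=0$, $E_0=2$. Conservation holds, since $1+1=2$. Because $G=\Delta_X$, $\eta=0$, and $m^2=m$, the load is $\sum_{x\in B}m(x)^2=\mu(B)$, so the coupling law holds for every $B$. Invariance fails at $B=R$, since $\mu(X)=2\ne 1$. This also shows that (b) is exactly what collapse needs (Theorem~\ref{thm:global-collapse}).

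\emph{Failing (c) only.} Take $X=R=\{r\}$, $I=\varnothing$, $\Pi_R=\mathrm{id}$, $G=X\times X$, $m(r)=2$, $\eta=0$, $E_0=2$. Conservation and invariance hold trivially. The coupling law at $B=X$ would require $\mu(X)^2=\mu(X)$, but $4\neq 2$. Alternatively, any datum violating the feasibility bound of Proposition~\ref{prop:feasibility-bound} would serve.

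I do not expect a real obstacle here. The only point needing care is that each separating datum must still be a valid pre-structural datum: $R,I$ disjoint, $G\in\mathcal A\otimes\mathcal A$ (automatic for finite $X$), $E_0>0$, and $\eta\in[0,1]$. Each must also satisfy Axioms~I and~II exactly, and not merely up to null sets. Checking the coupling law over all $B\subseteq X$ in the (b)-failing model reduces to the pointwise identity $m(x)^2=m(x)$, which is the one verification I would write out explicitly.
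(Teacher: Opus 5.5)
Your proposal is correct and takes the paper's model-theoretic route; the (b) case reuses exactly the datum of Proposition~\ref{prop:indep-III}, as the paper does, and your (a) and (c) data differ only in the concrete choices. For comparison, the paper uses the zero measure with $E_0=1$ for (a), and for (c) the two-point total relation with $\mu(X)=1$, $\eta=\tfrac12$. The paper's (c) example is precisely an instance of your feasibility-bound alternative (given I, II, (a), (b)), whereas your one-point example with $m(r)=2$, $\eta=0$ satisfies that bound and instead fails the exact $B=X$ identity $(1-\eta)\mu(X)^2=\mu(X)$.
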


\begin{proof}
\emph{(a) independent.} $X=\{r\}$, $R=\{r\}$, $I=\varnothing$, $\Pi_R=\mathrm{id}$, $G=\Delta_X$, $\mu\equiv0$, and $\mu^{\otimes2}\equiv0$ the corresponding zero product charge, $\eta=0$, and choose $E_0=1$ (the signature requires $E_0\in(0,\infty)$). Then (b) is trivial and (c) holds ($0=0$), but $\mu(R)+\mu(I)=0\ne1=E_0$, so conservation (a) fails.

\emph{(b) independent.} The datum $\mathcal D_{\neg\mathrm{III}}$ of Proposition~\ref{prop:indep-III} satisfies (a) ($E_0=2$) and (c) (with $\eta=0$ and $G=\Delta_X$: $\mu^{\otimes2}((\{x\}\times X)\cap\Delta_X)=m(x)^2=m(x)=\mu(\{x\})$ on singletons, hence (c) for all $B$ by finite additivity), but (b) fails.

\emph{(c) independent.} $X=\{a,b\}$, $R=X$, $I=\varnothing$, $\Pi_R=\mathrm{id}$, $G=X\times X$, with $m(a)=1$, $m(b)=0$, $\mu(B)=\sum_{x\in B}m(x)$, and $\mu^{\otimes2}$ the canonical atomic product charge induced by $m$ (Lemma~\ref{lem:atomic-product-charge}), $\eta=\tfrac12$: (a) holds ($E_0=1$), (b) trivial. But for $B=\{a\}$, $\mu^{\otimes2}((\{a\}\times X)\cap G)=m(a)(m(a)+m(b))=1$ while $\mu(B)+\eta\,\mu^{\otimes2}((\Pi_R^{-1}(B)\times X)\cap G)=1+\tfrac12\cdot1=\tfrac32$, so (c) fails.
\end{proof}

\subsection{Main independence theorem}\label{sec:indep-main}

\begin{theorem}[Independence]\label{thm:independence-main}
The axioms~I,~II,~III are mutually independent, and the subclauses (a),(b),(c) of Axiom~III are mutually independent relative to Axioms~I,~II.
\end{theorem}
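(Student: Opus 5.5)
The proof will be an assembly of the separating models already built, with one check: each model must be a genuine pre-structural datum (Definition~\ref{def:structural-datum}), so that only the targeted axiom or subclause fails. By ``mutual independence'' I mean that for each axiom $\mathsf X\in\{\mathrm I,\mathrm{II},\mathrm{III}\}$ there is a pre-structural datum satisfying the other two axioms and failing $\mathsf X$. For the internal statement, for each subclause $\mathsf s\in\{(a),(b),(c)\}$ there must be a datum satisfying Axioms~I,~II and the other two subclauses while failing $\mathsf s$.

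For the first assertion I will cite the three separating models. Proposition~\ref{prop:indep-I} satisfies II and III but not I. Proposition~\ref{prop:indep-II} satisfies I and III but not II. Proposition~\ref{prop:indep-III} satisfies I and II but not III. In each case I will confirm the signature requirements. $X$ is finite with $\mathcal A=\mathcal P(X)$, so $R$, $I$, $R\cup I$ and $G$ are measurable, and $\mathcal A\otimes\mathcal A=\mathcal P(X\times X)$ is generated by singleton rectangles. Here $\mu^{\otimes2}$ is the atomic product charge of Lemma~\ref{lem:atomic-product-charge}, and $E_0>0$ with $\eta\in[0,1]$. The key point is that a datum failing Axiom~I is still a legitimate pre-structural datum, because the signature asks only that $\Pi_R:X\to R$ be a map. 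In Proposition~\ref{prop:indep-I} the map does land in $R$, so this holds.

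For the second assertion I will cite Proposition~\ref{prop:indep-III-internal}, which supplies a separating datum for each of (a), (b) and (c). I will check that each of these data satisfies Axioms~I and~II. All three use $\Pi_R=\mathrm{id}$ or a constant retraction onto a singleton $R$, and $G$ is either $\Delta_X$ or $X\times X$, both reflexive, symmetric and idempotent. The case needing care is (a), where $\mu\equiv0$. The zero charge still meets the signature, since it is finitely additive, $[0,\infty)$-valued, and satisfies the rectangle rule as $0=0\cdot0$, and $E_0=1\in(0,\infty)$. This case does not conflict with Proposition~\ref{prop:eta-lt-one}, because that result assumes (a); here (c) holds trivially as $0=0$.

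The main obstacle is a matter of bookkeeping rather than depth: making sure no separating model accidentally violates a second clause. The most delicate instance is the (c)-separating model, with $m(a)=1$, $m(b)=0$, $G=X\times X$ and $\eta=\tfrac12$. I will recompute $E_0=\mu(R)+\mu(I)=1$ there, and note that with $\Pi_R=\mathrm{id}$ invariance (b) holds trivially. Finally, the ``mutual'' phrasing is fully covered once each axiom fails in some model where the others hold, which is exactly what the six cited models provide.
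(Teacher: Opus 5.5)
Your proposal is correct and follows the paper's own proof: the theorem is obtained by combining the separating models of Propositions~\ref{prop:indep-I}, \ref{prop:indep-II}, \ref{prop:indep-III} and~\ref{prop:indep-III-internal}. Your additional checks are accurate and only make explicit what the paper leaves implicit; they confirm that each separating datum meets the pre-structural signature, in particular that $\Pi_R$ in Proposition~\ref{prop:indep-I} still maps into $R$ and that the zero charge with $E_0=1$ is admissible data.
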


\begin{proof}
Combine Propositions~\ref{prop:indep-I},~\ref{prop:indep-II},~\ref{prop:indep-III},~\ref{prop:indep-III-internal}.
\end{proof}

\begin{corollary}[Minimality]\label{cor:minimality}
Removing any one axiom, or any one subclause of Axiom~III, yields a strictly weaker system. In particular, support collapse (Theorem~\ref{thm:global-collapse}) already fails without subclause~III(b) (Proposition~\ref{prop:indep-III}); load rigidity (Corollary~\ref{cor:rigidity}) additionally requires III(b) (through collapse), the coupling law~III(c), and the idempotence of $\Pi_R$ (Axiom~I), but \emph{not} Axiom~II; and Axiom~II becomes essential only for component quantization (Theorem~\ref{thm:component-quantization}), where it makes $G$ an equivalence relation.
\end{corollary}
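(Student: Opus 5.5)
The Minimality corollary packages the independence theorem with the dependency analysis of Section~\ref{sec:collapse}, so I would prove it clause by clause, citing the earlier results rather than building anything new.

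\emph{Strict weakening.} For each axiom $\mathsf X\in\{\mathrm{I},\mathrm{II},\mathrm{III}\}$ and each subclause of Axiom~III, Theorem~\ref{thm:independence-main} supplies a pre-structural datum satisfying the remaining conditions but not $\mathsf X$. Hence $\mathsf X$ is not a consequence of the others, and the reduced system admits strictly more models. This is routine bookkeeping over Propositions~\ref{prop:indep-I}--\ref{prop:indep-III-internal}.

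\emph{Collapse needs III(b).} I would take the datum of Proposition~\ref{prop:indep-III}. It satisfies I, II, III(a) and III(c); the last is checked in Proposition~\ref{prop:indep-III-internal}. Yet $\mu(X\setminus R)=1\neq0$, so collapse genuinely fails once III(b) is dropped.

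\emph{What rigidity uses.} I would re-read the proof of Corollary~\ref{cor:rigidity}. It invokes:
\begin{enumerate}[label=(\roman*),nosep]
\item the coupling law $(\ast)$, i.e.\ III(c);
\item preimage idempotence, Proposition~\ref{prop:preimage-idempotence}, which rests on Axiom~I;
\item collapse, i.e.\ III(b);
\item $\eta<1$, from III(a) and III(c).
\end{enumerate}
Axiom~II appears nowhere, so rigidity holds in the Axiom~II-failing datum of Proposition~\ref{prop:indep-II}. There the load visibly equals $\mu(B)$ with $\eta=0$, which confirms the claim.

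\emph{Where Axiom~II enters, and the main obstacle.} Quantization is the step where Axiom~II is used: it makes $G$ transitive (Proposition~\ref{prop:elementary}(iii)), so that $(C\times X)\cap G=C\times C$. The delicate point is the word ``requires'' in the rigidity claim. Showing that rigidity actually fails without III(c), without III(b), or without Axiom~I takes counterexamples. Without III(c), the (c)-separating datum of Proposition~\ref{prop:indep-III-internal} gives load $1\neq 2=\mu(B)/(1-\eta)$, so that case is covered. For the other two, I would read ``requires'' as ``is used in the proof'', and try to supplement it with small three-point modifications of the existing separating data if a genuine failure is wanted.
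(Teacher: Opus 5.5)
Your treatment of strict weakening and of the collapse clause matches the paper's argument. The paper's whole proof is ``Immediate from Theorem~\ref{thm:independence-main}; the separating models witness each claim.'' For collapse the witness is, as you say, the datum of Proposition~\ref{prop:indep-III}, with III(c) checked in Proposition~\ref{prop:indep-III-internal}.

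You part ways with the paper on the rigidity clause, and there your route is the accurate one. The separating data for Axiom~I and for III(b) both have $\eta=0$. At $\eta=0$ the coupling law \emph{is} the rigidity identity $\mathcal C(B)=\mu(B)$, so rigidity holds in those models, and they show nothing about its dependence on I or III(b). The same remark makes your check in the datum of Proposition~\ref{prop:indep-II} automatic; the ``not Axiom~II'' claim is really carried by your audit of the proof of Corollary~\ref{cor:rigidity}. Your argument is correct because you read ``requires'' as ``is used in that proof'' for I and III(b), and give a genuine counterexample for III(c). The paper's one-line proof claims more than its models show.

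On your optional supplement, the two remaining cases behave differently.

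For III(b) a genuine failure exists, and a two-point change suffices. In the datum of Proposition~\ref{prop:indep-III} take $\eta=\tfrac{3}{5}$, $m(r)=3$, $m(i)=1$, $E_0=4$. Then:
\begin{itemize}[nosep]
\item Axioms~I, II and III(a) hold;
\item III(c) holds on singletons ($9=3+\tfrac{3}{5}\cdot 10$ at $\{r\}$, and $1=1+0$ at $\{i\}$), hence everywhere by additivity;
\item III(b) fails, since $\mu(\Pi_R^{-1}(\{r\}))=\mu(X)=4\neq 3$;
\item rigidity fails: $\mathcal C(\{r\})=9\neq\tfrac{15}{2}=\mu(\{r\})/(1-\eta)$.
\end{itemize}

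For Axiom~I, no modification can work. Assume only $\Pi_R(X)\subseteq R$ and measurability of preimages. Then III(b) at $B=R$ gives collapse, hence $\mu\circ\Pi_R^{-1}=\mu$ on $\mathcal A$. Iterating III(c) then gives
\[
\mathcal C(B)=\frac{1-\eta^n}{1-\eta}\,\mu(B)+\eta^n\,\mathcal C\bigl(\Pi_R^{-n}(B)\bigr)\qquad(n\ge 1).
\]
Here $0\le\mathcal C\le\mu(X)^2$, and $\eta<1$ comes from the $B=X$ instance, which holds for any map. Letting $n\to\infty$ yields rigidity without the retraction property $\Pi_R|_R=\mathrm{id}_R$. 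So for this clause the proof-usage reading is the only true one. The ``Axiom~I'' entry for rigidity in Table~\ref{tab:dependencies} records what the self-substitution proof uses (preimage idempotence), not something rigidity needs.
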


\begin{proof}
Immediate from Theorem~\ref{thm:independence-main}; the separating models witness each claim.
\end{proof}

\begin{remark}[Independence, yet collapse]\label{rem:indep-yet-collapse}
The conjunction of Theorem~\ref{thm:independence-main} and the results of Section~\ref{sec:collapse} is the conceptual core of the paper. The three axioms are independent (none is redundant), yet \emph{together} they force the rigidity package. The layers should be kept distinct (see Table~\ref{tab:dependencies}): support collapse $\mu(X\setminus R)=0$ already follows from subclause~III(b) (Theorem~\ref{thm:global-collapse}); the endpoint exclusion and load rigidity require, in addition, the coupling law~III(c) (and, for rigidity, collapse via~III(b) together with the idempotence of $\Pi_R$ from Axiom~I), but \emph{not} Axiom~II; and only component quantization invokes the idempotent structure of~II, through which $G$ becomes an equivalence relation. The independence results certify that this layered rigidity is produced by genuine interaction, not by a hidden subsumption of one axiom by another.
\end{remark}

\begin{table}[h]
\centering
\small
\renewcommand{\arraystretch}{1.25}
\setlength{\tabcolsep}{4pt}
\begin{tabular}{lll}
\hline
\textbf{Consequence} & \textbf{Essentially used} & \textbf{Not used} \\
\hline
Support collapse $\mu(X\setminus R){=}0$ (Thm~\ref{thm:global-collapse}) & III(b) $[B{=}R]$, finite additivity & I, II, III(c) \\
Endpoint $\eta<1$ (Prop~\ref{prop:eta-lt-one}) & III(c) $[B{=}X]$, III(a), rectangle bound & I, II, III(b) \\
Load rigidity (Cor~\ref{cor:rigidity}) & III(b), III(c), Axiom~I, $\eta<1$ & II \\
Quantization (Thm~\ref{thm:component-quantization}) & load rigidity, Axiom~II & --- \\
Integrality (Cor~\ref{cor:integrality}) & quantization, finite $\mu$--conull blocks & --- \\
\hline
\end{tabular}
\caption{Division of labor among the axioms. Each consequence is paired with the axioms and subclauses essential to its proof; the last column records ingredients the proof does \emph{not} require. The idempotent--relation axiom~II ($G\circ G=G$) enters only from component quantization onward, where it makes $G$ an equivalence relation; support collapse, endpoint exclusion, and load rigidity do not use it. The support--collapse row refers to the conclusion $\mu(X\setminus R)=0$; the auxiliary identity $\mu(\Pi_R^{-1}(B))=\mu(B)$ for all $B\in\mathcal A$ proved in the same theorem additionally invokes the preimage--measurability clause of Axiom~I, and it is this identity (not the bare conclusion $\mu(X\setminus R)=0$) that the load--rigidity row bills as ``Axiom~I''. Throughout, ``Axiom~I'' enters only through preimage idempotence $\Pi_R^{-1}\Pi_R^{-1}=\Pi_R^{-1}$, itself a consequence of the retraction property $\Pi_R|_R=\mathrm{id}_R$ rather than a separately imposed idempotence.}
\label{tab:dependencies}
\end{table}


\section{Normalization constraints and the role of \texorpdfstring{$\sigma$}{sigma}--additivity}\label{sec:normalization}

Proposition~\ref{prop:feasibility-bound} already gave the feasibility bound $E_0\ge(1-\eta)^{-1}$ and the implication $\mu(X)=1\Rightarrow\eta=0$ using only finite additivity. This section isolates what, if anything, $\sigma$--additivity adds. The short answer: it is needed only to guarantee a canonical product measure and to license countable block formulas; the normalization obstruction itself is finitely additive.

\subsection{The global identity}
\begin{proposition}[Global identity at $B=X$]\label{prop:global-constraint}
For any pre-structural datum with $\mu(X)<\infty$ satisfying the coupling law at $B=X$,
\[
(1-\eta)\,\mu^{\otimes2}(G)=\mu(X).
\]
If moreover $\mu(X)\in(0,\infty)$, then necessarily $\eta<1$ (else the left side vanishes while the right is positive), and hence
\[
\mu^{\otimes2}(G)=\frac{\mu(X)}{1-\eta}.
\]
\end{proposition}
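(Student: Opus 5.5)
The plan is to specialize the coupling law to $B=X$ and then do elementary algebra, essentially as in the proof of Proposition~\ref{prop:eta-lt-one}, but without using Axiom~III(a) or any other axiom beyond the $B=X$ instance of III(c) and the pre-structural datum.

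\emph{Step 1: reduce the coupling law at $B=X$.} Since $\Pi_R$ maps $X$ into $R\subseteq X$, we have $\Pi_R^{-1}(X)=X$. This holds for any map into $X$ and needs no retraction property. Also $(X\times X)\cap G=G$, and $G\in\mathcal A\otimes\mathcal A$ by the pre-structural datum, so every term is defined. The coupling law at $B=X$ then reads
\[
\mu^{\otimes2}(G)=\mu(X)+\eta\,\mu^{\otimes2}(G).
\]

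\emph{Step 2: justify the subtraction.} This is the only point needing care, since a priori one worries about $\infty-\infty$. Here $\mu^{\otimes2}$ is $[0,\infty)$-valued by definition. In addition, finite additivity and nonnegativity give monotonicity, so the rectangle rule yields $\mu^{\otimes2}(G)\le\mu^{\otimes2}(X\times X)=\mu(X)^2<\infty$. Hence we may subtract $\eta\,\mu^{\otimes2}(G)$ from both sides to get $(1-\eta)\mu^{\otimes2}(G)=\mu(X)$, which is the first claim.

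\emph{Step 3: the case $\mu(X)\in(0,\infty)$.} Suppose $\eta=1$. Then the left side is $0$ while the right side is $\mu(X)>0$, a contradiction. Since $\eta\in[0,1]$ by the signature, this forces $\eta<1$, so $1-\eta>0$. Dividing gives $\mu^{\otimes2}(G)=\mu(X)/(1-\eta)$.

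I expect no real obstacle. The one subtle point is the finiteness check in Step 2, which is what makes the cancellation legitimate.
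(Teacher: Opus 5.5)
Your proposal is correct and matches the paper's proof: specialize the coupling law to $B=X$ using $\Pi_R^{-1}(X)=X$ and $(X\times X)\cap G=G$, rearrange to $(1-\eta)\mu^{\otimes2}(G)=\mu(X)$, and rule out $\eta=1$ by positivity of $\mu(X)$. Your explicit finiteness check justifying the subtraction is a detail the paper leaves implicit in this proof but states in the proof of Proposition~\ref{prop:eta-lt-one}.
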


\begin{proof}
$(X\times X)\cap G=G$ and $\Pi_R^{-1}(X)=X$, so the coupling law at $B=X$ reads $\mu^{\otimes2}(G)=\mu(X)+\eta\mu^{\otimes2}(G)$, i.e.\ $(1-\eta)\mu^{\otimes2}(G)=\mu(X)$. If $\mu(X)>0$ and $\eta=1$ this would force $0=\mu(X)>0$, impossible; so $\eta<1$ and division is valid.
\end{proof}

\begin{corollary}[Probability normalization forces $\eta=0$]\label{cor:probability-forces-eta0}
If $\mu(X)=1$ (finitely additive probability), then $\eta=0$ and $\mu^{\otimes2}(G)=1$.
\end{corollary}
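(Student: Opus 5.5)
The plan is to combine the global identity with the feasibility bound. Proposition~\ref{prop:global-constraint} is proved for any pre-structural datum with $\mu(X)<\infty$ satisfying the coupling law at $B=X$. With $\mu(X)=1\in(0,\infty)$ it gives $\eta<1$ and
\[
\mu^{\otimes2}(G)=\frac{\mu(X)}{1-\eta}=\frac{1}{1-\eta}.
\]

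Next we bound the same quantity from above by the rectangle rule. Since $G\subseteq X\times X$, finite additivity and nonnegativity of $\mu^{\otimes2}$ give monotonicity on $\mathcal A\otimes\mathcal A$. Here $G$ and $X\times X$ both lie in the algebra, and so does their difference. Hence
\[
\mu^{\otimes2}(G)\le\mu^{\otimes2}(X\times X)=\mu(X)^2=1.
\]

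Combining the two displays yields $\frac{1}{1-\eta}\le 1$, i.e.\ $1-\eta\ge 1$, so $\eta\le 0$. Together with $\eta\in[0,1]$ this forces $\eta=0$. Substituting back gives $\mu^{\otimes2}(G)=1/(1-0)=1$. This is the same chain of inequalities as in Proposition~\ref{prop:feasibility-bound}, now specialized to $\mu(X)=1$.

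There is no real obstacle. The only point to be explicit about is the monotonicity of the finitely additive charge, which rests on $(X\times X)\setminus G$ belonging to $\mathcal A\otimes\mathcal A$. This holds because $G\in\mathcal A\otimes\mathcal A$ by the definition of a pre-structural datum. I would also note that neither $\sigma$--additivity nor Axioms~I and~II is used: only the $B=X$ instance of III(c), the rectangle rule, and the normalization $\mu(X)=1$.
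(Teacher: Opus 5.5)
Your proof is correct and follows essentially the same route as the paper: the global identity gives $\mu^{\otimes2}(G)=1/(1-\eta)\ge 1$, the rectangle rule gives $\mu^{\otimes2}(G)\le\mu(X)^2=1$, and together these force $\eta=0$ and $\mu^{\otimes2}(G)=1$. You also spell out why monotonicity holds, namely $(X\times X)\setminus G\in\mathcal A\otimes\mathcal A$, which the paper leaves implicit.
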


\begin{proof}
$\mu^{\otimes2}(G)=1/(1-\eta)\ge1$, but $\mu^{\otimes2}(G)\le\mu^{\otimes2}(X\times X)=1$; hence $\mu^{\otimes2}(G)=1$ and $\eta=0$.
\end{proof}

\subsection{\texorpdfstring{Where $\sigma$--additivity is genuinely used}{Where sigma-additivity is genuinely used}}
The only places countable additivity is essential are:
\begin{itemize}[nosep]
\item \emph{Product--measure uniqueness.} Under $\sigma$--additivity and $\sigma$--finiteness the product measure $\mu^{\otimes2}$ on $(X\times X,\Sigma\,\overline{\otimes}\,\Sigma)$ exists and is uniquely determined by the rectangle rule (see e.g.\ \cite{Halmos,Bogachev}); in the purely finitely additive setting $\mu^{\otimes2}$ must be posited as model data.
\item \emph{Countable block formulas.} The block formula of Theorem~\ref{thm:pi-id-classification} under~(K--ctbl) requires $\sigma$--additivity to sum over countably many classes; the finite case~(K--fin) does not.
\end{itemize}
Neither of these affects the normalization obstruction, which is the content of Proposition~\ref{prop:feasibility-bound} and Corollary~\ref{cor:probability-forces-eta0}.

\begin{remark}[Corrected normalization statement]\label{rem:normalization-corrected}
The precise statement is therefore: \emph{nontrivial $\eta\neq 0$ families require non--probability total mass, $E_0=\mu(X)>1$; this obstruction already appears for finitely additive probability measures, and $\sigma$--additivity is not the essential issue.} We emphasize that infinite total mass is not an admissible regime: by definition $\mu$ is $[0,\infty)$--valued, and even granting an extended--valued reading, Axiom~III(b) at $B=R$ gives $\mu(X)=\mu(R)$ while Axiom~III(a) gives $\mu(R)\le E_0<\infty$, so every admissible model satisfies $\mu(X)=E_0<\infty$ automatically. Large total mass can therefore be approached only through \emph{sequences} of admissible models with $E_{0,n}\to\infty$, not through a single infinite--mass model; see the further directions in Section~\ref{sec:conclusion}.
\end{remark}


\section{Categorical structure and null-extension factorization}\label{sec:category}

We organize structural models into a category (for categorical background see \cite{MacLane,Awodey,Borceux,Riehl}), and show that the collapse theorem makes the passage to the identity--retraction core a clean null--extension factorization, valid on \emph{all} of $\mathbf{Struct}_\eta$ without fiber hypotheses.

\subsection{Morphisms and the category}

We fix $\eta\in[0,1)$ and treat $E_0\in(0,\infty)$ as object data; $\mathbf{Struct}_\eta$ has admissible structural models as objects. As Remark~\ref{rem:E0-morphism} below shows, every morphism preserves $E_0$, so this object datum is in fact a morphism invariant.

\begin{definition}[Morphism]\label{def:morphism}
Let $\mathcal M,\mathcal M'$ be admissible models sharing $\eta$. A \emph{morphism} $\phi:\mathcal M\to\mathcal M'$ is a map $\phi:X\to X'$ with:
\begin{enumerate}[label=\emph{(M\arabic*)},nosep,leftmargin=2.5em]
\item \emph{Measurability}: $\phi^{-1}(B')\in\mathcal A$ and $(\phi\times\phi)^{-1}(S')\in\mathcal A\otimes\mathcal A$ for $B'\in\mathcal A'$, $S'\in\mathcal A'\otimes\mathcal A'$.
\item \emph{Projection commutativity}: $\phi\circ\Pi_R=\Pi_{R'}\circ\phi$.
\item \emph{Relation preservation}: $(\phi\times\phi)(G)\subseteq G'$.
\item \emph{Measure--preservation}: for every $B'\in\mathcal A'$ and $S'\in\mathcal A'\otimes\mathcal A'$,
\[
\mu(\phi^{-1}(B'))=\mu'(B'),\qquad \mu^{\otimes2}((\phi\times\phi)^{-1}(S'))=\mu'^{\otimes2}(S').
\]
\end{enumerate}
\end{definition}

\begin{remark}[$E_0$ under morphisms]\label{rem:E0-morphism}
Applying (M4) to $B'=X'$, and noting $\phi^{-1}(X')=X$, gives $\mu(X)=\mu'(X')$. Since both objects are admissible, Theorem~\ref{thm:global-collapse} gives $\mu(X)=E_0$ and $\mu'(X')=E_0'$. Hence \emph{every} morphism in $\mathbf{Struct}_\eta$ preserves the scalar,
\[
E_0=E_0',
\]
with no surjectivity hypothesis. Consequently $\mathbf{Struct}_\eta$ decomposes as a disjoint union of full subcategories indexed by the value of $E_0\in(0,\infty)$: there are no morphisms between objects of different total mass.
\end{remark}

\begin{remark}[Automatic consequences]\label{rem:morphism-consequences}
$\phi(R)=\phi(\Pi_R(X))=\Pi_{R'}(\phi(X))\subseteq R'$ by~(M2) and Axiom~I; and $\phi_*\mu^{\otimes2}=\mu'^{\otimes2}$ on rectangles by~(M4) and the rectangle rule.
\end{remark}

\begin{proposition}[Category]\label{prop:category}
Morphisms compose and identities exist, so admissible models with common $\eta$ form a category $\mathbf{Struct}_\eta$.
\end{proposition}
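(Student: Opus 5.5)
The plan is to verify the category axioms directly: identities satisfy (M1)--(M4), composites of morphisms satisfy (M1)--(M4), and composition is associative. Associativity needs no work, because morphisms are ordinary set maps $X\to X'$ composed as functions. The same holds for the unit laws, once $\mathrm{id}_X$ is known to be a morphism. All the content lies in checking the four conditions.

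\emph{Identities.} For $\phi=\mathrm{id}_X$ we have $\phi^{-1}(B)=B\in\mathcal A$ and $(\phi\times\phi)^{-1}(S)=S\in\mathcal A\otimes\mathcal A$, so (M1) holds. Condition (M2) reads $\Pi_R=\Pi_R$, and (M3) reads $G\subseteq G$. Condition (M4) reduces to $\mu(B)=\mu(B)$ and $\mu^{\otimes2}(S)=\mu^{\otimes2}(S)$.

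\emph{Composition.} Let $\phi:\mathcal M\to\mathcal M'$ and $\psi:\mathcal M'\to\mathcal M''$ be morphisms, all three models sharing $\eta$.
\begin{itemize}[nosep]
\item For (M1), use $(\psi\circ\phi)^{-1}(B'')=\phi^{-1}(\psi^{-1}(B''))$. Here $\psi^{-1}(B'')\in\mathcal A'$ by (M1) for $\psi$, and then (M1) for $\phi$ puts the result in $\mathcal A$. The two-point version is identical, using $((\psi\circ\phi)\times(\psi\circ\phi))^{-1}=(\phi\times\phi)^{-1}\circ(\psi\times\psi)^{-1}$ on subsets of $X''\times X''$.
\item For (M2), compute $(\psi\circ\phi)\circ\Pi_R=\psi\circ\Pi_{R'}\circ\phi=\Pi_{R''}\circ\psi\circ\phi$.
\item For (M3), compute $((\psi\circ\phi)\times(\psi\circ\phi))(G)=(\psi\times\psi)((\phi\times\phi)(G))\subseteq(\psi\times\psi)(G')\subseteq G''$.
\item For (M4), chain the two equalities: $\mu((\psi\circ\phi)^{-1}(B''))=\mu'(\psi^{-1}(B''))=\mu''(B'')$. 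This is legitimate because $\psi^{-1}(B'')\in\mathcal A'$ by (M1). The two-point equality follows the same way, since $(\psi\times\psi)^{-1}(S'')\in\mathcal A'\otimes\mathcal A'$.
\end{itemize}
The composite shares $\eta$ trivially, and by Remark~\ref{rem:E0-morphism} all three models also share $E_0$.

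I expect no real obstacle. The only point needing care is the one in (M4): the measurability clause (M1) of the outer map has to be invoked so that the inner map's (M4) is applied to a set in its domain algebra.
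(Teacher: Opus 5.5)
Your proposal is correct and matches the paper's proof essentially step for step. Both arguments show that (M1)--(M4) survive composition using $(\psi\circ\phi)^{-1}=\phi^{-1}\circ\psi^{-1}$, the image chain $(\psi\times\psi)(\phi\times\phi)(G)\subseteq(\psi\times\psi)(G')\subseteq G''$, and the two-step equality chain for (M4), with identities trivial; your explicit note that (M1) for $\psi$ is what licenses applying (M4) for $\phi$ to $\psi^{-1}(B'')$ is a detail the paper leaves implicit.
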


\begin{proof}
(M1)--(M4) are preserved under composition: measurability by $(\psi\circ\phi)^{-1}=\phi^{-1}\circ\psi^{-1}$; (M2) by associativity; (M3) by $(\psi\times\psi)(\phi\times\phi)(G)\subseteq(\psi\times\psi)(G')\subseteq G''$; and (M4) by
\[
\mu((\psi\circ\phi)^{-1}(B''))=\mu'(\psi^{-1}(B''))=\mu''(B'')
\]
for the measure part, together with the product part, for $S''\in\mathcal A''\otimes\mathcal A''$,
\[
\mu^{\otimes2}\bigl(((\psi\circ\phi)\times(\psi\circ\phi))^{-1}(S'')\bigr)
=\mu'^{\otimes2}\bigl((\psi\times\psi)^{-1}(S'')\bigr)
=\mu''^{\otimes2}(S''),
\]
using $((\psi\circ\phi)\times(\psi\circ\phi))^{-1}=(\phi\times\phi)^{-1}\circ(\psi\times\psi)^{-1}$. Identity is clear.
\end{proof}

\subsection{Transport of the coupling law}

Morphisms preserve not only the listed data but also the coupling law. By universal load rigidity (Corollary~\ref{cor:rigidity}) the load is already determined on each object by $(\mu,\eta)$; the following records how the closed form transports along a morphism, and shows that the excess of the pulled--back relation is load--null.

\begin{proposition}[Transport of the coupling law]\label{prop:coupling-transport}
Let $\phi:\mathcal M\to\mathcal M'$ be a morphism in $\mathbf{Struct}_\eta$, and write $P_\phi:=(\phi\times\phi)^{-1}(G')$. Relation preservation~(M3) forces $G\subseteq P_\phi$. Then for every $B'\in\mathcal A'$ (both objects being admissible),
\[
\mu^{\otimes2}\bigl((\phi^{-1}(B')\times X)\cap P_\phi\bigr)
=\mu^{\otimes2}\bigl((\phi^{-1}(B')\times X)\cap G\bigr)
=\frac{\mu'(B')}{1-\eta},
\]
and consequently the excess of the pulled--back relation carries no load:
\[
\mu^{\otimes2}\bigl((\phi^{-1}(B')\times X)\cap(P_\phi\setminus G)\bigr)=0.
\]
\end{proposition}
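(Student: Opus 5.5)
The plan is to compute three quantities and compare them. The first is the left-hand side with $P_\phi$. The second is the same expression with $G$. The third is $\mu'(B')/(1-\eta)$.

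\textbf{Step 1: $G\subseteq P_\phi$.} This is immediate from (M3). If $(x,y)\in G$, then $(\phi(x),\phi(y))\in(\phi\times\phi)(G)\subseteq G'$, so $(x,y)\in(\phi\times\phi)^{-1}(G')=P_\phi$.

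\textbf{Step 2: measurability.} Write $B:=\phi^{-1}(B')\in\mathcal A$ by (M1). Then $P_\phi\in\mathcal A\otimes\mathcal A$, again by (M1), since $G'\in\mathcal A'\otimes\mathcal A'$. So all sets involved lie in the product algebra, and finite additivity applies to the disjoint decomposition
\[
(B\times X)\cap P_\phi=\bigl((B\times X)\cap G\bigr)\sqcup\bigl((B\times X)\cap(P_\phi\setminus G)\bigr).
\]

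\textbf{Step 3: the $P_\phi$-term via pullback.} The key observation is that
\[
(B\times X)\cap P_\phi=(\phi\times\phi)^{-1}\bigl((B'\times X')\cap G'\bigr),
\]
because $\phi^{-1}(X')=X$ and preimages commute with products and intersections. The product part of (M4) then gives
\[
\mu^{\otimes2}\bigl((B\times X)\cap P_\phi\bigr)=\mu'^{\otimes2}\bigl((B'\times X')\cap G'\bigr).
\]
Corollary~\ref{cor:rigidity} applied in $\mathcal M'$, which is admissible with the same $\eta$, evaluates this as $\mu'(B')/(1-\eta)$.

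\textbf{Step 4: the $G$-term.} Corollary~\ref{cor:rigidity} applied in $\mathcal M$ gives
\[
\mu^{\otimes2}\bigl((B\times X)\cap G\bigr)=\frac{\mu(B)}{1-\eta}.
\]
The measure part of (M4) gives $\mu(B)=\mu'(B')$. Both terms therefore equal $\mu'(B')/(1-\eta)$.

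\textbf{Step 5: the excess.} Subtracting via the decomposition in Step 2 yields the vanishing of the excess term. The subtraction is legitimate because all values are finite, bounded by $\mu(X)^2$ through the rectangle rule.

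\textbf{Main obstacle.} The only delicate point is the set identity in Step 3 and ensuring each set lies in $\mathcal A\otimes\mathcal A$, so that (M4) and finite additivity apply. I expect (M1) to handle this directly; otherwise the argument is bookkeeping on top of rigidity.
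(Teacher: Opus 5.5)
Your proposal is correct and follows essentially the same route as the paper. You pull back $(B'\times X')\cap G'$ through the product part of (M4) and apply rigidity in $\mathcal M'$ for the $P_\phi$-term, apply rigidity in $\mathcal M$ together with the measure part of (M4) for the $G$-term, and subtract using $G\subseteq P_\phi$ and (M1)-measurability; your explicit finiteness remark justifying the subtraction is a small but welcome addition.
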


\begin{proof}
Both objects are admissible. By~(M4), $\mu'^{\otimes2}((B'\times X')\cap G')=\mu^{\otimes2}\bigl((\phi\times\phi)^{-1}((B'\times X')\cap G')\bigr)$; since $(\phi\times\phi)^{-1}(B'\times X')=\phi^{-1}(B')\times X$, the right side is $\mu^{\otimes2}((\phi^{-1}(B')\times X)\cap P_\phi)$. By universal rigidity (Corollary~\ref{cor:rigidity}) on $\mathcal M'$, the left side is $\mu'(B')/(1-\eta)$, giving the first equality. Applying rigidity to $\mathcal M$ at $B=\phi^{-1}(B')\in\mathcal A$ together with~(M4) gives $\mu^{\otimes2}((\phi^{-1}(B')\times X)\cap G)=\mu(\phi^{-1}(B'))/(1-\eta)=\mu'(B')/(1-\eta)$, the second. Since $G\subseteq P_\phi$ (M3), $(\phi^{-1}(B')\times X)\cap G\subseteq(\phi^{-1}(B')\times X)\cap P_\phi$ (both in $\mathcal A\otimes\mathcal A$ by~(M1)), and finite subtractivity of $\mu^{\otimes2}$ yields $\mu^{\otimes2}((\phi^{-1}(B')\times X)\cap(P_\phi\setminus G))=0$.
\end{proof}

\begin{remark}[What rigidity yields here]\label{rem:transport-rigidity}
Relation preservation~(M3) forces $G\subseteq P_\phi=(\phi\times\phi)^{-1}(G')$ automatically, so the inclusion $(\phi\times\phi)^{-1}(G')\subseteq G$ would give equality $P_\phi=G$ and is \emph{not} a genuinely weaker case. The informative content is therefore the \emph{excess} $P_\phi\setminus G$: a morphism may pull the target relation $G'$ back to a relation strictly larger than $G$, yet that excess is invisible to the load over every pulled--back test column. This is the morphism--level shadow of the null--extension philosophy (Theorem~\ref{thm:null-extension}) --- relational data beyond $G$ that the load does not see. Exact pullback $P_\phi=G$ is the special case (e.g.\ when $\mathcal M$ is the full pullback of $\mathcal M'$); in general only $G\subseteq P_\phi$ holds, with the difference load--null. No surjectivity of $\phi$ is used.
\end{remark}

\subsection{Idempotent endomorphisms}

\begin{proposition}[Retraction as split idempotent]\label{prop:retraction-idempotent}
$\Pi_R=\iota_R\circ\rho_R$, where $\rho_R:X\to R$ is $\Pi_R$ with restricted codomain and $\iota_R:R\hookrightarrow X$, with $\rho_R\circ\iota_R=\mathrm{id}_R$ (Axiom~I).
\end{proposition}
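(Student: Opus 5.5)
The plan is to verify the three assertions of the statement directly from the definitions: that $\rho_R$ and $\iota_R$ are well-defined maps, that $\Pi_R=\iota_R\circ\rho_R$, and that $\rho_R\circ\iota_R=\mathrm{id}_R$. No measure-theoretic input is needed. The only ingredient beyond the set-level definitions is the retraction clause of Axiom~I.

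First I will define $\rho_R:X\to R$ by $\rho_R(x):=\Pi_R(x)$. This is legitimate because the pre-structural datum (Definition~\ref{def:structural-datum}) already stipulates $\Pi_R:X\to R$, so every value lies in $R$. Next I will let $\iota_R:R\hookrightarrow X$ be the inclusion $\iota_R(r)=r$, which makes sense since $R\subseteq X$. For every $x\in X$ we then have $(\iota_R\circ\rho_R)(x)=\Pi_R(x)$, viewed as an element of $X$. This gives the factorization $\Pi_R=\iota_R\circ\rho_R$ as maps $X\to X$, where $\Pi_R$ is regarded here as an endomorphism of $X$.

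For the splitting identity, take $r\in R$. Then $(\rho_R\circ\iota_R)(r)=\Pi_R(r)=r$ by the retraction clause $\Pi_R|_R=\mathrm{id}_R$ of Axiom~I, so $\rho_R\circ\iota_R=\mathrm{id}_R$. As a consistency check, I will recover idempotence in the split form: $\Pi_R\circ\Pi_R=\iota_R(\rho_R\iota_R)\rho_R=\iota_R\rho_R=\Pi_R$. This matches the remark following Axiom~I and Proposition~\ref{prop:elementary}(ii), and it is what justifies calling this a \emph{split} idempotent.

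None of these steps is a genuine obstacle; the argument is pure bookkeeping. The one point that needs care is the codomain convention. The equation $\Pi_R=\iota_R\circ\rho_R$ must be read with $\Pi_R$ as a self-map of $X$, and I will say so explicitly. If one also wants the splitting to live in a measurable category, I would add the following observation. By Axiom~I, $\rho_R^{-1}(B)=\Pi_R^{-1}(B)\in\mathcal A$ for $B\in\mathcal A\cap\mathcal P(R)$. Also $\iota_R^{-1}(B)=B\cap R\in\mathcal A$ for $B\in\mathcal A$. Hence both maps are measurable with respect to the trace algebra $\mathcal A\cap\mathcal P(R)$ on $R$.
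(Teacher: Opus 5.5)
Your proposal is correct and follows essentially the same route as the paper, which simply notes that the factorization is definitional and that $\rho_R\circ\iota_R=\mathrm{id}_R$ is exactly the retraction clause $\Pi_R|_R=\mathrm{id}_R$ of Axiom~I. Your explicit codomain convention (reading $\Pi_R$ as a self-map of $X$), the recovery of idempotence, and the measurability check with respect to the trace algebra $\mathcal A\cap\mathcal P(R)$ are correct optional additions.
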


\begin{proof}
Definitional; $\rho_R\circ\iota_R=\mathrm{id}_R$ is $\Pi_R|_R=\mathrm{id}_R$.
\end{proof}

\begin{proposition}[$G$ as idempotent endorelation]\label{prop:G-idempotent}
$G\circ G=G$ (Axiom~II), so $G$ is an equivalence relation \cite{Ore} (Proposition~\ref{prop:elementary}(iii)).
\end{proposition}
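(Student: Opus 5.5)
The statement asks for two things: that $G\circ G=G$ holds, and that $G$ is therefore an equivalence relation. The plan is a direct unpacking of Axiom~II combined with Proposition~\ref{prop:elementary}(iii); no measure-theoretic input is needed.

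\emph{Idempotence.} The identity $G\circ G=G$ is literally part of Axiom~II, which every admissible model satisfies by Definition~\ref{def:admissible}. So this half is simply a restatement of the axiom.

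\emph{Equivalence relation.} Three properties need checking.
\begin{itemize}[nosep]
\item Reflexivity ($\Delta_X\subseteq G$) and symmetry are also stated explicitly in Axiom~II.
\item For transitivity, suppose $(x,y),(y,z)\in G$. The definition of composition then gives $(x,z)\in G\circ G$.
\item Idempotence identifies $G\circ G$ with $G$, so $(x,z)\in G$.
\end{itemize}
This is exactly the argument recorded in Proposition~\ref{prop:elementary}(iii), so I would cite it rather than repeat it.

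Note that only the inclusion $G\circ G\subseteq G$ is used for transitivity. The reverse inclusion $G\subseteq G\circ G$ is automatic from reflexivity: write $(x,y)=(x,y)\circ(y,y)$. It is worth remarking that, in the presence of reflexivity, idempotence is therefore equivalent to transitivity. This is what makes the categorical reading of $G$ as an idempotent endorelation match the classical notion of equivalence relation cited from \cite{Ore}.

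There is no real obstacle. The only care needed is to make clear that the proposition holds for admissible models (objects of $\mathbf{Struct}_\eta$), where Axiom~II is in force. The separating datum of Proposition~\ref{prop:indep-II} shows the conclusion can fail without it.
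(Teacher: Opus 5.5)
Your proposal is correct and matches the paper, which establishes this proposition simply by citing Axiom~II for $G\circ G=G$ and Proposition~\ref{prop:elementary}(iii) for transitivity, exactly as you do. Your extra remark that, given reflexivity, idempotence is equivalent to transitivity is accurate and a nice clarification, though the proposition does not need it.
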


\subsection{The identity--retraction subcategory and the core}

\begin{definition}[Identity--retraction subcategory]\label{def:struct-id}
$\mathbf{Struct}^{\mathrm{id}}_\eta\subseteq\mathbf{Struct}_\eta$ is the full subcategory of models with $\Pi_R=\mathrm{id}_X$ (so $R=X$, $I=\varnothing$).
\end{definition}

\begin{definition}[Identity--retraction core]\label{def:core}
For an admissible $\mathcal M$, its \emph{core} is
\[
\mathcal M|_R:=\bigl(R,\ \mathcal A\cap\mathcal P(R),\ \mu|_R,\ \mu^{\otimes2}|_{R\times R},\ R,\ \varnothing,\ \mathrm{id}_R,\ G|_R,\ \mu(R),\ \eta\bigr),
\qquad G|_R:=G\cap(R\times R).
\]
\end{definition}

\begin{remark}[Trace product algebra]\label{rem:trace-algebra}
Since $R\in\mathcal A$, write $\mathcal A_R:=\mathcal A\cap\mathcal P(R)$ for the trace algebra on $R$ (and similarly $\mathcal A'_{R'}$ for a second model). The product algebra on $\mathcal A_R$ coincides with the trace of $\mathcal A\otimes\mathcal A$ on $R\times R$:
\[
\mathcal A_R\otimes\mathcal A_R=\{\,S\cap(R\times R):S\in\mathcal A\otimes\mathcal A\,\}.
\]
Indeed a generating rectangle $B_1\times B_2$ with $B_i\in\mathcal A_R$ equals $(B_1\times B_2)\cap(R\times R)$ with $B_1\times B_2\in\mathcal A\otimes\mathcal A$, and conversely any $S\in\mathcal A\otimes\mathcal A$ is a finite Boolean combination of rectangles $B_1\times B_2$ ($B_i\in\mathcal A$), whose trace is the same combination of $(B_1\cap R)\times(B_2\cap R)$ with $B_i\cap R\in\mathcal A_R$. In particular $G|_R=G\cap(R\times R)\in\mathcal A_R\otimes\mathcal A_R$ is measurable in the core, and $\mu^{\otimes2}|_{R\times R}$ is a product charge for $\mu|_R$.
\end{remark}

The collapse theorem makes the core admissible \emph{unconditionally}, with no fiber measurability or regularity hypotheses.

\begin{theorem}[Global restriction to the core]\label{thm:null-extension}
Let $\mathcal M\in\mathbf{Struct}_\eta$. Then:
\begin{enumerate}[label=(\roman*),nosep]
\item $\mu(X\setminus R)=0$, and for every $B\in\mathcal A$,
\[
\mu^{\otimes2}\bigl((B\times X)\cap G\bigr)=\mu^{\otimes2}\bigl(((B\cap R)\times R)\cap G|_R\bigr).
\]
\item The core $\mathcal M|_R$ is an admissible structural model, i.e.\ $\mathcal M|_R\in\mathbf{Struct}^{\mathrm{id}}_\eta$.
\item The assignment $\rho:\mathbf{Struct}_\eta\to\mathbf{Struct}^{\mathrm{id}}_\eta$, $\mathcal M\mapsto\mathcal M|_R$, $\phi\mapsto\phi|_R$, is a functor, and $\rho\circ\iota=\mathrm{id}$ on $\mathbf{Struct}^{\mathrm{id}}_\eta$, where $\iota$ is the inclusion. Thus $\rho$ is a retraction of categories.
\end{enumerate}
In words: every admissible model is a null extension of its identity--retraction core; the restriction $\mathcal M\mapsto\mathcal M|_R$ discards only $\mu$--null and $\mu^{\otimes2}$--null data.
\end{theorem}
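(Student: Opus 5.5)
The plan is to derive everything from the collapse theorem (Theorem~\ref{thm:global-collapse}) and universal load rigidity (Corollary~\ref{cor:rigidity}), together with one auxiliary null-set estimate for $\mu^{\otimes2}$. That estimate is the following. If $S\in\mathcal A\otimes\mathcal A$ and $S\cap(R\times R)=\varnothing$, then $S\subseteq((X\setminus R)\times X)\cup(X\times(X\setminus R))$. Monotonicity and finite subadditivity of $\mu^{\otimes2}$, together with the rectangle rule, then give
\[
\mu^{\otimes2}(S)\le 2\,\mu(X\setminus R)\,\mu(X)=0 .
\]
Consequently $\mu^{\otimes2}(S)=\mu^{\otimes2}(S\cap(R\times R))$ for every $S\in\mathcal A\otimes\mathcal A$.

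For (i), the identity $\mu(X\setminus R)=0$ is exactly Theorem~\ref{thm:global-collapse}. I then apply the null-set estimate to $S=(B\times X)\cap G$. Its trace on $R\times R$ is $((B\cap R)\times R)\cap G|_R$, since $G\cap(R\times R)=G|_R$. This gives the displayed identity.

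For (ii), I check the axioms for the core $\mathcal M|_R$ one by one.
\begin{itemize}[nosep]
\item Data. The trace algebra and the trace product algebra are handled by Remark~\ref{rem:trace-algebra}. That remark gives $G|_R\in\mathcal A_R\otimes\mathcal A_R$ and shows that $\mu^{\otimes2}|_{R\times R}$ satisfies the rectangle rule.
\item Axiom~I. This is trivial for $\mathrm{id}_R$.
\item Axiom~II. By Proposition~\ref{prop:elementary}(iii), $G$ is an equivalence relation, so its restriction $G|_R$ is one too. Reflexivity gives $G|_R\subseteq G|_R\circ G|_R$, and transitivity gives the reverse inclusion.
\item III(a). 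The core has $I=\varnothing$ and scalar $\mu(R)$, and $\mu(R)=E_0>0$ by collapse.
\item III(b). This is trivial, since the retraction is the identity.
\item III(c). With the identity retraction the coupling law reads $(1-\eta)\,\mu^{\otimes2}((B\times R)\cap G|_R)=\mu(B)$ for measurable $B\subseteq R$. By (i), the left-hand load equals $\mu^{\otimes2}((B\times X)\cap G)$, and Corollary~\ref{cor:rigidity} gives that this equals $\mu(B)/(1-\eta)$. This is exactly what is required.
\end{itemize}
Hence $\mathcal M|_R\in\mathbf{Struct}^{\mathrm{id}}_\eta$.

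For (iii), let $\phi:\mathcal M\to\mathcal M'$ be a morphism. By Remark~\ref{rem:morphism-consequences}, $\phi(R)\subseteq R'$, so $\phi|_R:R\to R'$ is well defined. I verify (M1)--(M4) for $\phi|_R$.
\begin{itemize}[nosep]
\item (M1). For $B'\in\mathcal A'_{R'}$ we have $(\phi|_R)^{-1}(B')=\phi^{-1}(B')\cap R\in\mathcal A_R$. For $S'\in\mathcal A'_{R'}\otimes\mathcal A'_{R'}$, Remark~\ref{rem:trace-algebra} lets me view $S'$ as an element of $\mathcal A'\otimes\mathcal A'$ contained in $R'\times R'$. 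Then $(\phi|_R\times\phi|_R)^{-1}(S')=(\phi\times\phi)^{-1}(S')\cap(R\times R)$, which is measurable in the trace product algebra.
\item (M2). This is trivial, since both cores carry identity retractions.
\item (M3). This follows by restricting (M3) for $\phi$.
\item (M4). For sets, collapse gives $\mu(\phi^{-1}(B')\cap R)=\mu(\phi^{-1}(B'))=\mu'(B')$. For the product part, I apply the null-set estimate to $(\phi\times\phi)^{-1}(S')$ and then use (M4) for $\phi$.
\end{itemize}
Functoriality holds because $\phi(R)\subseteq R'$ makes $(\psi\circ\phi)|_R=\psi|_{R'}\circ\phi|_R$, and identities restrict to identities. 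Finally, $\rho\circ\iota=\mathrm{id}$ because $R=X$ for objects of $\mathbf{Struct}^{\mathrm{id}}_\eta$.

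I expect the main obstacle to be (iii), specifically the product-algebra bookkeeping in (M1) and (M4). One must identify elements of the trace product algebra with elements of $\mathcal A'\otimes\mathcal A'$ lying inside $R'\times R'$, and one must make sure that discarding the complement of $R\times R$ changes $\mu^{\otimes2}$ by zero. This is where the null-set estimate, and through it collapse, is used essentially.
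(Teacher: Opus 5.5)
Your proposal is correct and follows the paper's proof essentially step for step. It uses collapse plus the rectangle bound to discard everything off $R\times R$, applies rigidity to $\mathcal M$ (not to the core) for III(c), and restricts (M1)--(M4) via Remark~\ref{rem:trace-algebra}. One difference makes your version slightly cleaner: you state the null-set estimate once and take $S'$ itself as its own extension, which is legitimate since $R'\times R'$ is a rectangle and so $S'\in\mathcal A'\otimes\mathcal A'$. This avoids the paper's appeal to collapse in the target model $\mathcal M'$ for the product part of (M4).
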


\begin{proof}
(i) Collapse (Theorem~\ref{thm:global-collapse}) gives $\mu(X\setminus R)=0$. For measurable $S\subseteq(X\setminus R)\times X$, $\mu^{\otimes2}(S)\le\mu(X\setminus R)\mu(X)=0$, and likewise for $S'\subseteq X\times(X\setminus R)$. Writing $B_R=B\cap R$ and decomposing
\[
(B\times X)\cap G=\bigl((B_R\times R)\cap G\bigr)\sqcup\bigl((B_R\times(X\setminus R))\cap G\bigr)\sqcup\bigl(((B\setminus R)\times X)\cap G\bigr),
\]
the last two terms are $\mu^{\otimes2}$--null, leaving $\mu^{\otimes2}((B\times X)\cap G)=\mu^{\otimes2}((B_R\times R)\cap G)=\mu^{\otimes2}((B_R\times R)\cap G|_R)$, since $B_R\subseteq R$.

(ii) \emph{Axiom~I} on $\mathcal M|_R$ is trivial ($\Pi_{R|_R}=\mathrm{id}_R$). \emph{Axiom~II}: $G|_R$ is reflexive ($\Delta_R\subseteq G$), symmetric, and idempotent — $G|_R\circ G|_R\subseteq(G\circ G)\cap(R\times R)=G|_R$, and reflexivity gives the reverse inclusion. \emph{Axiom~III(a)}: $\mu(R)=E_0>0$ by collapse. \emph{Axiom~III(b)}: trivial. \emph{Axiom~III(c)}: fix $B_R\in\mathcal A\cap\mathcal P(R)$. By part~(i),
\[
\mu^{\otimes2}\bigl((B_R\times R)\cap G|_R\bigr)=\mu^{\otimes2}\bigl((B_R\times X)\cap G\bigr).
\]
We apply rigidity \emph{only to $\mathcal M$}, which is admissible by hypothesis: Corollary~\ref{cor:rigidity} for $\mathcal M$ at $B=B_R$ gives $\mu^{\otimes2}((B_R\times X)\cap G)=\mu(B_R)/(1-\eta)$. (We do \emph{not} invoke rigidity for $\mathcal M|_R$, whose admissibility is what is being proved.) Combining,
\[
(1-\eta)\,\mu^{\otimes2}\bigl((B_R\times R)\cap G|_R\bigr)=\mu(B_R),
\]
which is precisely Axiom~III(c) for the identity--retraction core (where $\Pi_{R|_R}^{-1}(B_R)=B_R$). Hence $\mathcal M|_R\in\mathbf{Struct}^{\mathrm{id}}_\eta$.

(iii) For a morphism $\phi:\mathcal M\to\mathcal M'$, $\phi(R)\subseteq R'$ (Remark~\ref{rem:morphism-consequences}), so $\phi|_R:R\to R'$ is well defined. Conditions (M2)--(M3) restrict immediately. For the set--level part of (M1), $(\phi|_R)^{-1}(B')=R\cap\phi^{-1}(B')\in\mathcal A_R$ for $B'\in\mathcal A'_{R'}$. For the product--level part, given $S'\in\mathcal A'_{R'}\otimes\mathcal A'_{R'}$ choose $T'\in\mathcal A'\otimes\mathcal A'$ with $S'=T'\cap(R'\times R')$ (Remark~\ref{rem:trace-algebra}); then
\[
(\phi|_R\times\phi|_R)^{-1}(S')=(R\times R)\cap(\phi\times\phi)^{-1}(T')\in\mathcal A_R\otimes\mathcal A_R,
\]
again by Remark~\ref{rem:trace-algebra}. For (M4) we use collapse. For $B'\in\mathcal A'_{R'}$, $\phi^{-1}(B')\setminus R\subseteq X\setminus R$ is $\mu$--null by Theorem~\ref{thm:global-collapse}, so
\[
\mu_R\bigl((\phi|_R)^{-1}(B')\bigr)=\mu(R\cap\phi^{-1}(B'))=\mu(\phi^{-1}(B'))=\mu'(B')=\mu'_{R'}(B'),
\]
the third equality by collapse and the fourth by (M4) for $\phi$. The product--charge identity is analogous: for $S'\in\mathcal A'_{R'}\otimes\mathcal A'_{R'}$, the part of $(\phi\times\phi)^{-1}(T')$ outside $R\times R$ lies in $((X\setminus R)\times X)\cup(X\times(X\setminus R))$, which is $\mu^{\otimes2}$--null, so
\[
\mu^{\otimes2}_{R\times R}\bigl((\phi|_R\times\phi|_R)^{-1}(S')\bigr)=\mu^{\otimes2}\bigl((\phi\times\phi)^{-1}(T')\bigr)=\mu'^{\otimes2}(T')=\mu'^{\otimes2}_{R'\times R'}(S').
\]
The last equality uses the \emph{target} collapse as well: $T'\setminus(R'\times R')$ is contained in $((X'\setminus R')\times X')\cup(X'\times(X'\setminus R'))$, which is $\mu'^{\otimes2}$--null by Theorem~\ref{thm:global-collapse} applied to $\mathcal M'$, so $\mu'^{\otimes2}(T')=\mu'^{\otimes2}(T'\cap(R'\times R'))=\mu'^{\otimes2}_{R'\times R'}(S')$. Composition and identity are preserved, so $\rho$ is a functor. For $\mathcal M\in\mathbf{Struct}^{\mathrm{id}}_\eta$, $R=X$ and $\mathcal M|_R=\mathcal M$, so $\rho\circ\iota=\mathrm{id}$.
\end{proof}

\begin{corollary}[Closed form of the load on any model]\label{cor:load-closed-form}
For every admissible $\mathcal M$ and every $B\in\mathcal A$, $\mu^{\otimes2}((B\times X)\cap G)=\mu(B)/(1-\eta)$. If moreover the core $\mathcal M|_R$ satisfies (K--fin) or (K--ctbl), then, writing $B_R:=B\cap R$,
\[
\mu^{\otimes2}((B\times X)\cap G)=\sum_{k\in K}\mu(B_R\cap C_k)\,\mu(C_k),\qquad \mu(C_k)\in\{0,(1-\eta)^{-1}\}.
\]
\end{corollary}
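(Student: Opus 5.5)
The first identity needs no new argument. It is exactly Universal Load Rigidity (Corollary~\ref{cor:rigidity}), which holds for every admissible $\mathcal M$ and every $B\in\mathcal A$. So the only substantive task is the block expansion under the hypothesis on the core.

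For the second part, I pass to the core. By Theorem~\ref{thm:null-extension}(i), with $B_R:=B\cap R\in\mathcal A_R$,
\[
\mu^{\otimes2}((B\times X)\cap G)=\mu^{\otimes2}\bigl((B_R\times R)\cap G|_R\bigr).
\]
The right side is the load of the core $\mathcal M|_R$ evaluated at $B_R$, using the core's product charge $\mu^{\otimes2}|_{R\times R}$. Theorem~\ref{thm:null-extension}(ii) says $\mathcal M|_R\in\mathbf{Struct}^{\mathrm{id}}_\eta$. The core therefore satisfies the standing hypotheses of Theorem~\ref{thm:pi-id-classification}:
\begin{itemize}[nosep]
\item $R$ plays the role of $X$, with empty $I$ and identity retraction;
\item $G|_R$ is reflexive, symmetric and idempotent, with equivalence classes $(C_k)_{k\in K}$ partitioning $R$;
\item $\eta<1$ holds by Proposition~\ref{prop:eta-lt-one}.
\end{itemize}
Given (K--fin) or (K--ctbl) for the core, that theorem's block formula applied at the core-measurable set $B_R$ gives
\[
\mu^{\otimes2}((B_R\times R)\cap G|_R)=\sum_{k}\mu(B_R\cap C_k)\mu(C_k).
\]
Here $\mu|_R$ agrees with $\mu$ on subsets of $R$. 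Since the coupling law holds in the admissible core, the same theorem's dichotomy gives $\mu(C_k)\in\{0,(1-\eta)^{-1}\}$. Alternatively, this follows from Theorem~\ref{thm:component-quantization} applied to the core.

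The main point requiring care is (K--ctbl). There, the load on the core must be computed with the $\sigma$-additive product measure, and one has to check that it agrees with the finitely additive charge $\mu^{\otimes2}|_{R\times R}$ on the product algebra. I would read the hypothesis ``$\mathcal M|_R$ satisfies (K--ctbl)'' as including this identification, exactly as in the statement of Theorem~\ref{thm:pi-id-classification}. Under that reading, the set $(B_R\times R)\cap G|_R$ lies in the product algebra by Remark~\ref{rem:trace-algebra}, so the two charges agree on it and the chain of equalities is consistent. The remaining steps are direct citations.
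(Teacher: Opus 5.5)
Your proposal is correct and follows the paper's proof: the first identity is Corollary~\ref{cor:rigidity}, and the block form comes from reducing to the core via Theorem~\ref{thm:null-extension}(i)--(ii) and then applying Theorem~\ref{thm:pi-id-classification} to $\mathcal M|_R$. Your extra care about (K--ctbl) is consistent with how the paper uses that hypothesis, namely reading it as identifying the core's product charge with the restricted $\sigma$--additive product measure, with $(B_R\times R)\cap G|_R$ lying in the product algebra by Remark~\ref{rem:trace-algebra}.
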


\begin{proof}
The first identity is Corollary~\ref{cor:rigidity}. The block form follows from Theorem~\ref{thm:null-extension}(i) and Theorem~\ref{thm:pi-id-classification} applied to $\mathcal M|_R$.
\end{proof}

\begin{remark}[Place of the factorization]\label{rem:place-factorization}
Theorem~\ref{thm:null-extension} replaces the fiber--measurability quotient construction of the earlier formulation. Because collapse holds for every admissible model, no regularity hypothesis (R--fin)/(R--ctbl) on fibers is needed to define the core or to make $\rho$ a functor; such hypotheses survive only where the \emph{block formula} (an internal property of the core) is invoked, i.e.\ in Theorem~\ref{thm:pi-id-classification}.
\end{remark}

\subsection{The fiberwise statement and why it is subsumed}\label{sec:fiber-free}

The reduction of Theorem~\ref{thm:null-extension} rests on the single global identity $\mu(X\setminus R)=0$, obtained from the $B=R$ instance of invariance. It is instructive to compare this with the \emph{fiberwise} route, which proves the same conclusion locally and then sums it under cardinality and additivity hypotheses. We record the fiberwise statement, show it is the $r$--local shadow of collapse, and make precise the hypotheses that collapse renders unnecessary.

\begin{definition}[Fiber partition]\label{def:fibers}
For $r\in R$, the \emph{fiber} of $\Pi_R$ over $r$ is $F_r:=\Pi_R^{-1}(\{r\})$. Under Axiom~I, $\Pi_R|_R=\mathrm{id}_R$ gives $r\in F_r$, and $X=\bigsqcup_{r\in R}F_r$.
\end{definition}

\begin{lemma}[Fiberwise annihilation]\label{lem:fiberwise}
Assume Axiom~I and Axiom~III(b), and suppose $\{r\}\in\mathcal A$ and $F_r\in\mathcal A$ for a given $r\in R$. Then
\[
\mu(F_r\setminus\{r\})=0.
\]
\end{lemma}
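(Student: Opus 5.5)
The plan is to apply the invariance subclause III(b) to the single measurable set $B=\{r\}\subseteq R$ and then split the fiber by finite additivity.

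First, since $\{r\}\in\mathcal A$ and $\{r\}\subseteq R$, Axiom~III(b) applies with $B=\{r\}$. Because $\Pi_R^{-1}(\{r\})=F_r$ by definition, it gives
\[
\mu(F_r)=\mu(\{r\}).
\]
(Axiom~I already guarantees $F_r\in\mathcal A$, so the hypothesis $F_r\in\mathcal A$ is in fact redundant but harmless.)

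Second, Definition~\ref{def:fibers} shows, using Axiom~I ($\Pi_R(r)=r$), that $r\in F_r$. Hence
\[
F_r=\{r\}\sqcup(F_r\setminus\{r\}).
\]
Here $F_r\setminus\{r\}=F_r\cap(X\setminus\{r\})\in\mathcal A$, because $\mathcal A$ is an algebra.

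Third, finite additivity gives
\[
\mu(F_r)=\mu(\{r\})+\mu(F_r\setminus\{r\}).
\]
Comparing this with the first step and subtracting the finite real number $\mu(\{r\})$ yields $\mu(F_r\setminus\{r\})=0$.

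Alternatively, one can read the result off from Theorem~\ref{thm:global-collapse}. Indeed $F_r\setminus\{r\}\subseteq X\setminus R$ by Proposition~\ref{prop:elementary}(i), since a point of $F_r$ other than $r$ is not fixed by $\Pi_R$. Monotonicity then gives $\mu(F_r\setminus\{r\})\le\mu(X\setminus R)=0$. This is exactly the ``$r$-local shadow of collapse'' the section describes.

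No step is a real obstacle. The only points needing care are the measurability of the difference set and the membership $r\in F_r$, and both are immediate from the algebra axioms and Axiom~I.
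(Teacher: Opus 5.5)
Your main argument is correct and is exactly the paper's proof: invariance at $B=\{r\}$ gives $\mu(F_r)=\mu(\{r\})$, and since $r\in F_r$ by Axiom~I, finite additivity on $F_r=\{r\}\sqcup(F_r\setminus\{r\})$ forces $\mu(F_r\setminus\{r\})=0$. Your alternative via Theorem~\ref{thm:global-collapse} is also valid under the lemma's weaker hypotheses, because that theorem's proof of $\mu(X\setminus R)=0$ uses only III(b) at $B=R$ together with $R\in\mathcal A$; your remark that $F_r\in\mathcal A$ already follows from Axiom~I is also correct.
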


\begin{proof}
Since $r\in F_r$, the union $F_r=\{r\}\sqcup(F_r\setminus\{r\})$ is disjoint and lies in $\mathcal A$. Invariance at $B=\{r\}$ gives $\mu(F_r)=\mu(\Pi_R^{-1}(\{r\}))=\mu(\{r\})$, so finite additivity yields $\mu(\{r\})+\mu(F_r\setminus\{r\})=\mu(\{r\})$, hence $\mu(F_r\setminus\{r\})=0$.
\end{proof}

\begin{proposition}[Fiber--free reduction: collapse subsumes the fiberwise route]\label{prop:fiber-free}
Lemma~\ref{lem:fiberwise} is the $r$--local shadow of global collapse. A hypothesis--based derivation of $\mu(X\setminus R)=0$ would proceed by
\[
X\setminus R=\bigsqcup_{r\in R}(F_r\setminus\{r\})
\]
and summing Lemma~\ref{lem:fiberwise} over $r$, which requires:
\begin{itemize}[nosep]
\item fiber measurability $\{r\},F_r\in\mathcal A$ for \emph{every} $r\in R$; and
\item a summability hypothesis, either \emph{(R--fin)} $R$ finite, or \emph{(R--ctbl)} $R$ countable with $\mu$ $\sigma$--additive,
\end{itemize}
since finite additivity alone does not control an uncountable disjoint union of null sets. The collapse theorem (Theorem~\ref{thm:global-collapse}) obtains the global conclusion $\mu(X\setminus R)=0$ with \emph{none} of these hypotheses: it uses only $R\in\mathcal A$ (so $X\setminus R\in\mathcal A$) and the single instance $B=R$ of invariance. Consequently the core construction (Definition~\ref{def:core}) and the functor $\rho$ (Theorem~\ref{thm:null-extension}) are defined on all of $\mathbf{Struct}_\eta$, with no fiber subcategory.
\end{proposition}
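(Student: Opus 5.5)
The proposition has two parts. The first is a conditional derivation: summing Lemma~\ref{lem:fiberwise} over the fibres yields $\mu(X\setminus R)=0$, provided the listed hypotheses hold. The second is a comparison: Theorem~\ref{thm:global-collapse} gives the same conclusion without any of them. I would prove the set identity, then the summation under each hypothesis, then the comparison.

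\emph{Set identity.} By Definition~\ref{def:fibers} and Axiom~I, $X=\bigsqcup_{r\in R}F_r$ with $r\in F_r$. A point $x\notin R$ lies in exactly one fibre $F_{\Pi_R(x)}$, and $x\neq\Pi_R(x)$ because $\Pi_R(x)\in R$. Conversely, every $F_r\setminus\{r\}$ misses $R$: a point $y\in R$ lying in $F_r$ has $y=\Pi_R(y)=r$ by Proposition~\ref{prop:elementary}(i). Hence
\[
X\setminus R=\bigsqcup_{r\in R}(F_r\setminus\{r\}),
\]
and the union is disjoint because fibres of a map are disjoint.

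\emph{Summation.} Assume fibre measurability for every $r\in R$. Lemma~\ref{lem:fiberwise} then gives $\mu(F_r\setminus\{r\})=0$ for each $r$.
\begin{itemize}[nosep]
\item Under (R--fin), $X\setminus R$ is a finite disjoint union of these null sets, so finite additivity gives $\mu(X\setminus R)=0$.
\item Under (R--ctbl), the union is countable, and $\sigma$--additivity gives $\mu(X\setminus R)=\sum_r 0=0$. Here I need $\mathcal A$ to be closed under the relevant countable union; this is supplied by $X\setminus R\in\mathcal A$, which holds because $R\in\mathcal A$.
\end{itemize}

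\emph{Why the hypotheses are needed on this route.} The step I expect to need the most care is the remark that finite additivity alone cannot handle an uncountable union of null sets. I would not try to produce a counterexample to the fibrewise route itself; the statement only asserts that the route requires these hypotheses. Instead I would point to the continuous variant of Remark~\ref{rem:continuous-nonid}. There all fibres $F_r$ with $r\in(0,1]$ are singletons, and $F_0=\{0\}\cup(1,2]$, so the fibrewise argument has uncountably many terms. A charge summed over uncountably many null pieces need not add to zero; Lebesgue measure over singletons already shows this in the $\sigma$--additive case. Some summability hypothesis is therefore genuinely needed for this route.

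\emph{Comparison and consequence.} Theorem~\ref{thm:global-collapse} uses only $R\in\mathcal A$, so that $X\setminus R\in\mathcal A$, together with invariance at $B=R$, where $\Pi_R^{-1}(R)=X$. It makes no use of fibre measurability or of any cardinality or summability assumption. Since Definition~\ref{def:core} and Theorem~\ref{thm:null-extension} rely only on this global collapse, they apply on all of $\mathbf{Struct}_\eta$ without any fibre subcategory.
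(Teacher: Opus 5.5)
Your proposal is correct and follows the paper's own proof: the same fibre-minus-representative decomposition, the same finite or $\sigma$-additive summation of Lemma~\ref{lem:fiberwise} under (R--fin)/(R--ctbl), and the same comparison with the single $B=R$ instance of invariance; your explicit check of the set identity just spells out what the paper calls ``the fiber partition minus the representatives.'' One illustration is wrong. In Remark~\ref{rem:continuous-nonid} one has $F_r\setminus\{r\}=\varnothing$ for every $r\in(0,1]$, so the union reduces to the single null piece $F_0\setminus\{0\}=(1,2]$ and no summability issue arises there. The clause that a summability hypothesis is needed rests instead on your general observation that null pieces need not have null union: uncountably many Lebesgue-null singletons already fail, and under a merely finitely additive $\mu$ even countably many null pieces can carry positive mass.
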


\begin{proof}
The displayed decomposition is the fiber partition (Definition~\ref{def:fibers}) minus the representatives. Under (R--fin) the union is finite and finite additivity applies; under (R--ctbl) it is countable and $\sigma$--additivity applies; either way $\mu(X\setminus R)=\sum_{r}\mu(F_r\setminus\{r\})=0$ by Lemma~\ref{lem:fiberwise}. Without such a hypothesis the sum is uncontrolled, as a finitely additive measure may assign positive mass to an uncountable disjoint union of null sets. That collapse avoids the issue entirely is the content of Theorem~\ref{thm:global-collapse}, whose proof takes $B=R$ directly: $\mu(X)=\mu(\Pi_R^{-1}(R))=\mu(R)$ and $\mu(X)=\mu(R)+\mu(X\setminus R)$ force $\mu(X\setminus R)=0$.
\end{proof}

\begin{remark}[What the comparison buys]\label{rem:fiber-comparison}
Proposition~\ref{prop:fiber-free} isolates the precise mechanism of the simplification: invariance is a constraint on $\mu\circ\Pi_R^{-1}$, and its strongest single instance is $B=R$, which already sees \emph{all} of $X$ at once (because $\Pi_R^{-1}(R)=X$). The fiberwise instances $B=\{r\}$ see only one fiber each, so recombining them costs a summability hypothesis. The lesson is that the global instance is not merely more efficient but strictly more powerful in the finitely additive setting: it reaches conclusions (uncountable $R$, no $\sigma$--additivity) that the fiberwise route cannot.
\end{remark}


\section{Examples}\label{sec:examples}

\begin{example}[Representative core with a null extension]\label{ex:repr-null}
Let $X$ be finite, partitioned into nonempty blocks $(C_j)_{j\in J}$. Choose $r_j\in C_j$, set $R=\{r_j:j\in J\}$, $I=X\setminus R$, $\mathcal A=\mathcal P(X)$, $\Pi_R(x)=r_j$ for $x\in C_j$, and $G=\bigcup_j(C_j\times C_j)$. Choose $w_j\in\{0,1\}$ with $\sum_j w_j>0$ and $\mu(B)=\sum_{j:r_j\in B}w_j$, $E_0=\mu(X)$, $\eta=0$. By the verification in Theorem~\ref{thm:finite-model}'s style this is admissible; consistently with collapse, $\mu(I)=0$, and the core $\mathcal M|_R$ is the identity--retraction model on the representatives. The blocks $C_j$ form a null extension of the core: they carry relational structure through $G$ but no $\mu$--mass.
\end{example}

\begin{example}[Quotient weights and forced representative support]\label{ex:quotient-weights}
This example is not an external application but a \emph{diagnostic} use of the axioms: it shows how admissibility decides which weightings of a quotient structure are allowed. Let $Q=\{1,\dots,n\}$ be a finite set of types. For each $q\in Q$ let
\[
C_q=\{r_q,\,h_{q,1},\dots,h_{q,m_q}\}
\]
be finitely many descriptions of the same type, with distinguished representative $r_q$. Set
\[
X=\bigsqcup_{q\in Q}C_q,\quad R=\{r_q:q\in Q\},\quad I=X\setminus R,\quad \mathcal A=\mathcal P(X),
\]
$\Pi_R(x)=r_q$ for $x\in C_q$, and $G=\bigsqcup_{q\in Q}(C_q\times C_q)$, so $G$ identifies descriptions of the same type. The $C_q$ are exactly the $G$--equivalence classes, all measurable.

\emph{Step 1 (support, from invariance alone).} Suppose one wishes to weight all descriptions by a finitely additive $\mu$ subject to projection--measure invariance $\mu(\Pi_R^{-1}(B))=\mu(B)$ for $B\subseteq R$ (subclause~III(b)). Already its instance $B=R$ gives $\mu(X)=\mu(\Pi_R^{-1}(R))=\mu(R)$, hence by finite additivity (cf.\ Theorem~\ref{thm:global-collapse})
\[
\mu(X\setminus R)=0,\qquad\text{equivalently}\qquad \mu(C_q\setminus\{r_q\})=0\quad(q\in Q).
\]
The redundant descriptions $h_{q,i}$ may persist as elements of $X$ and as members of $G$, but they cannot carry positive $\mu$--mass: the quotient is measured entirely on the representative sector $R$. This step needs only III(b), not the full axiom system.

\emph{Step 2 (quantization, $\eta\neq0$).} If these data are completed to an admissible structural model with parameter $\eta\in(0,1)$, then the already--imposed invariance~III(b) and the coupling law~III(c), together with Axiom~II, give the full rigidity package. In particular the load is no longer free,
\[
\mu^{\otimes2}((B\times X)\cap G)=\frac{\mu(B)}{1-\eta}\qquad(B\in\mathcal A),
\]
and, each $C_q$ being a measurable $G$--equivalence class, Theorem~\ref{thm:component-quantization} forces
\[
\mu(C_q)\in\Bigl\{0,\tfrac{1}{1-\eta}\Bigr\}.
\]
Since $\mu(C_q\setminus\{r_q\})=0$ by Step 1, this is the same as $\mu(\{r_q\})\in\{0,(1-\eta)^{-1}\}$: a type is either $\mu$--invisible or carries exactly the quantum $(1-\eta)^{-1}$, concentrated on its representative. One cannot, for instance, build an admissible model in which the representative of one type weighs twice that of another.

\emph{Step 3 (a counting constraint).} Let $N$ be the number of positive--mass types. The positive--mass classes are finitely many, measurable, and their union is $\mu$--conull (the rest being $\mu$--null by Steps 1--2), so Corollary~\ref{cor:integrality} applies and yields
\[
N=(1-\eta)\,E_0\in\mathbb{Z}_{>0}.
\]
Thus the axioms act as an admissibility test for quotient weights: redundant descriptions cannot be counted, observable types cannot be weighted arbitrarily, and the global scale $E_0$ and rate $\eta$ are tied to one another through the integer $N$. A would--be model with, say, $E_0=3$ and $\eta=\tfrac12$ is inadmissible in this block form, since $(1-\eta)E_0=\tfrac32\notin\mathbb{Z}$.
\end{example}


\section{Conclusion}\label{sec:conclusion}

We have studied a minimal ZFC--internal axiom system for admissible structural models, pre-structural data $(X,\mathcal A,\mu,\mu^{\otimes2},R,I,\Pi_R,G,E_0,\eta)$ satisfying Axioms~I,~II,~III.

The central phenomenon is not that the coupling law admits a fixed--point representation, but that admissibility forces a global collapse:
\[
\mu(X\setminus R)=0\qquad\text{(Theorem~\ref{thm:global-collapse})}.
\]
Every admissible model is therefore, modulo null sets, an identity--retraction model (Theorem~\ref{thm:null-extension}). The coupling law then rigidly determines the two--point load,
\[
\mu^{\otimes2}\bigl((B\times X)\cap G\bigr)=\frac{\mu(B)}{1-\eta}\qquad\text{(Corollary~\ref{cor:rigidity})},
\]
the endpoint $\eta=1$ is automatically excluded with feasibility bound $E_0\ge(1-\eta)^{-1}$ (Propositions~\ref{prop:eta-lt-one},~\ref{prop:feasibility-bound}), and every measurable $G$--component has mass $0$ or $(1-\eta)^{-1}$ (Theorem~\ref{thm:component-quantization}).

The independence results (Theorem~\ref{thm:independence-main}, Corollary~\ref{cor:minimality}) show that this collapse is not the product of redundant axioms: the three axioms, and the three subclauses of Axiom~III, are mutually independent, yet together they force the collapse and the surrounding rigidity. The fixed--point reformulation (Theorem~\ref{thm:coupling-fixed-point}) survives as a corollary, valid for arbitrary bounded charges and recovering the closed form as the unique fixed point of a contraction. Under $\sigma$--additive probability normalization the global identity forces $\eta=0$ (Section~\ref{sec:normalization}), but we have shown this obstruction is finitely additive in nature, with $\sigma$--additivity entering only through product--measure uniqueness and countable block formulas.

\paragraph{Further directions.}
Natural extensions include: replacing $\mathcal A$ by a $\sigma$--algebra under hypotheses giving a canonical product charge; enriching $G$ to a weighted kernel $K:X\times X\to[0,\infty)$ and comparing with graph limits \cite{Lovasz}; an adjoint characterization of the core retraction $\rho:\mathbf{Struct}_\eta\to\mathbf{Struct}^{\mathrm{id}}_\eta$ \cite{MacLane,Riehl}; operator--theoretic variants replacing $\mu$ by a state or trace; and large--mass scaling limits. For the last, note that the collapse theorem keeps every \emph{individual} admissible model finite--mass, so the relevant object is a sequence $\mathcal M_n$ of admissible models with finite total masses $E_{0,n}=\mu_n(X_n)\to\infty$; the limiting behavior of $\eta_n$ and of the rescaled load is then governed by local instances of the coupling law rather than by any single infinite--mass model.


\end{document}